 \newcommand{\Bk}{\color{black}}
\definecolor{br}{rgb}{1, 0.4,0}
\numberwithin{equation}{section}
\theoremstyle{plain}
\newtheorem{theorem}[equation]{Theorem}
\newtheorem{prop}[equation]{Proposition}
\newtheorem{corollary}[equation]{Corollary}
\newtheorem{lemma}[equation]{Lemma}
\theoremstyle{definition}
\newtheorem{defn}[equation]{Definition}
\theoremstyle{remark}
\newtheorem{remarks}[equation]{Remarks}
\newtheorem{remark}[equation]{Remark}
\numberwithin{equation}{section}
\newcommand{\RR}{{\mathbb{R}}}
\newcommand{\ZZ}{{\mathbb{Z}}}
\newcommand{\WW}{\mathcal{W}}
\newcommand{\C}{\mathcal{C}}
\newcommand{\DD}{\mathbb{D}}
\newcommand{\dd}{\mathbb{D}}
\newcommand{\PP}{\mathbb{P}}
\newcommand{\HH}{\mathcal{H}}
\newcommand{\F}{\mathcal{F}}
\newcommand{\N}{\mathcal{N}}
\newcommand{\W}{\mathcal{W}}
\newcommand{\dist}{\operatorname{dist}}
\newcommand{\pom}{\partial\Omega}
\newcommand{\hm}{\omega}
\renewcommand{\emptyset}{\mbox{\textup{\O}}}
\DeclareMathOperator*{\osc}{osc}
\DeclareMathOperator{\diam}{diam}
\DeclareMathOperator{\interior}{int}
\DeclareMathOperator*{\Lip}{Lip}
\def\XXint#1#2#3{{\setbox0=\hbox{$#1{#2#3}{\int}$}
     \vcenter{\hbox{$#2#3$}}\kern-.5\wd0}}
\DeclareMathOperator{\divg}{div}
\DeclareMathOperator{\Id}{Id}
\newcommand{\pO}{\partial\Omega}
\newcommand{\wcalA}{\mathcal{A}}
\begin{document}
\allowdisplaybreaks

\title[Uniform rectifiability and elliptic operators. Part II.]{Uniform rectifiability and elliptic operators satisfying a Carleson measure 
	condition. Part II: The large constant case}

\author[S. Hofmann]{Steve Hofmann}

\address{Steve Hofmann
\\
Department of Mathematics
\\
University of Missouri
\\
Columbia, MO 65211, USA} \email{hofmanns@missouri.edu}

\author[J.M. Martell]{José María Martell}

\address{José María Martell
\\
Instituto de Ciencias Matemáticas CSIC-UAM-UC3M-UCM
\\
Consejo Superior de Investigaciones Científicas
\\
C/ Nicolás Cabrera, 13-15
\\
E-28049 Madrid, Spain} \email{chema.martell@icmat.es}

\author[S. Mayboroda]{Svitlana Mayboroda}

\address{Svitlana Mayboroda
\\
Department of Mathematics
\\
University of Minnesota
\\
Minneapolis, MN 55455, USA} \email{svitlana@math.umn.edu}

\author[T. Toro]{Tatiana Toro}

\address{Tatiana Toro 
\\ 
University of Washington 
\\
Department of Mathematics 
\\
Seattle, WA 98195-4350, USA}

\email{toro@uw.edu}

\author[Z. Zhao]{Zihui Zhao}

\address{Zihui Zhao
\\ 
Department of Mathematics
\\
University of Chicago
\\
Chicago, IL 60637, USA}

\email{zhaozh@uchicago.edu}

\thanks{The first author was partially supported by NSF grant number DMS-1664047.
The second author acknowledges that
the research leading to these results has received funding from the European Research
Council under the European Union's Seventh Framework Programme (FP7/2007-2013)/ ERC
agreement no. 615112 HAPDEGMT. He also acknowledges financial support from the Spanish Ministry of Economy and Competitiveness, through the ``Severo Ochoa Programme for Centres of Excellence in R\&D'' (SEV-2015-0554). 
The third author was partially supported by the NSF INSPIRE Award DMS 1344235, the NSF RAISE-TAQ grant DMS 1839077, and the Simons 
Foundation grant 563916, SM.
The fourth author was partially supported by the Craig McKibben \& Sarah Merner Professor in Mathematics, by NSF grant number DMS-1664867, and by the Simons Foundation Fellowship 614610. 
The fifth author was partially supported by NSF grants DMS-1361823, DMS-1500098, DMS-1664867, DMS-1902756 and by the Institute for Advanced Study. }
\thanks{This material is based upon work supported by the National Science Foundation under Grant No. DMS-1440140 while the authors were in residence at the Mathematical Sciences Research Institute in Berkeley, California, during the Spring 2017 semester.}

\date{\today}
\subjclass[2010]{35J25, 42B37, 31B35.}

\keywords{Elliptic measure, uniform domain, $A_{\infty}$ class, exterior corkscrew, rectifiability.}

\begin{abstract}
The present paper, along with its companion  \cite{HMMTZ}, establishes the correspondence between the properties of the solutions of a class of PDEs and the geometry of sets in Euclidean space. We settle the question of whether (quantitative) absolute continuity of the elliptic measure with respect to the surface measure and uniform rectifiability of the boundary are equivalent, in an optimal class of divergence form elliptic operators satisfying a suitable Carleson measure condition. The result can be viewed as a quantitative analogue of the Wiener criterion adapted to the singular $L^p$ data case.

The first step in this direction was taken in our previous paper \cite{HMMTZ}, where we considered the case in which the desired Carleson measure condition on the coefficients holds with sufficiently small constant.  In this paper we establish the final, general result, that is, 
the ``large constant case''. The key elements of our approach are a powerful extrapolation argument, which provides a general pathway to self-improve scale-invariant small constant estimates, as well as a new mechanism to transfer quantitative absolute continuity of elliptic measure between a domain and its subdomains.
\end{abstract}

\maketitle

\tableofcontents

\section{Introduction}

The present paper, together with its companion  \cite{HMMTZ} and its converse in \cite{KP} (see also \cite{DJe}) culminate many years of activity at the intersection of harmonic analysis, geometric measure theory, and PDEs, devoted to the complete understanding of necessary and sufficient conditions on the operator and the geometry of the domain guaranteeing absolute continuity of the elliptic measure with respect to the surface measure of the boundary.

The celebrated 1924 Wiener criterion \cite{Wiener} provided the necessary and sufficient conditions on the geometry of the domain responsible for the continuity of the harmonic functions at the boundary. In the probabilistic terms, it characterized the points of the boundary which are  ``seen" by the Brownian travelers coming from the interior of the domain. 

The question of finding necessary and sufficient geometric conditions which could guarantee adequate regularity, so that, roughly speaking, the pieces of the boundary are seen by the Brownian travelers according to their surface measure, turned out to be much more intricate. Curiously, already in 1916 F. \& M. Riesz correctly identified the key geometric notion in this context:  rectifiability of the boundary $\pom$, i.e., the existence of tangent planes almost everywhere 
with respect to arc length $\sigma$ on $\pom$.  In particular, they showed in  \cite{Rfm} that harmonic measure is (mutually) absolutely continuous with respect to $\sigma$ for
a simply connected domain in the plane with rectifiable boundary. It took more than a hundred years to establish the converse of the F. \& M. Riesz theorem and its higher dimensional analogues. The first such result appeared in 2016 \cite{7au}, and the question was fully settled for the harmonic functions in 2018 \cite{AHMMT}.

The question of what happens in the general PDE setting has been puzzling from the beginning.
The Wiener criterion is universal: it applies to all uniformly elliptic divergence form operators with bounded coefficients and characterizes
points of continuity of the solution at the boundary. It was realized early 
on that no such general criterion exists for determining the absolute continuity of elliptic measure with respect to the surface measure to the boundary of a domain.
Some of the challenges that arise when considering this question were highlighted by the counterexamples in \cite{CFK}, \cite{MM}. In 1984 Dahlberg 
formulated a conjecture concerning optimal conditions on a matrix of coefficients which guarantee absolute continuity of elliptic measure with respect to 
Lebesgue measure in a half-space. This question was a driving force of a thread of outstanding developments in harmonic analysis in the 80s and 90s  due to Dahlberg, Jerison, Kenig, Pipher, and others, stimulating some beautiful and far-reaching new techniques in the theory of weights and singular integral operators, to mention only a few approaches. In \cite{KP}, Kenig and Pipher proved Dahlberg's conjecture, they showed that whenever the gradient of coefficients satisfies a Carleson measure condition (to be defined below) the elliptic measure and the Lebesgue measure are mutually absolutely  continuous on a half-space and, by a change of variables argument, above a  Lipschitz graph. 

%In fact, the question of optimal conditions on a matrix of coefficients which guarantee absolute continuity of elliptic measure with respect to Lebesgue measure in a half-space has been a driving force of a thread of outstanding developments in harmonic analysis in the 80s and 90s  due to Dahlberg, Jerison, Kenig, Pipher, and others, stimulating some beautiful and far-reaching new techniques in the theory of weights and singular integral operators, to mention only a few approaches.   
%, absolute continuity of elliptic measure with respect to the surface measure cannot hold without additional assumptions on coefficients, and following the counterexamples in \cite{CFK}, \cite{MM} and the 1984 conjecture by B. Dahlberg, the optimal positive results were proved by Kenig and Pipher in \cite{KP}. Specifically, they showed that whenever the gradient of coefficients satisfies a Carleson measure condition (to be defined below) the elliptic measure and the Lebesgue measure are mutually absolutely  continuous on a half space and, by a change of variables argument, above a  Lipschitz graph. According to the aforementioned counterexamples such a Carleson measure condition is of the nature of the best possible. 
 
%thus providing the final, optimal results both geometrically and in terms of the hypotheses on the operator at hand. 

Given the aforementioned developments, it was natural to conjecture that the equivalence of rectifiability and regularity of elliptic measure should be valid in the full generality of Dahlberg-Kenig-Pipher (DKP) coefficients.
Despite numerous attempts this question turned out to be notoriously resistant to existing methods.
The passage from the regularity of the solutions to partial differential equations to rectifiability, or to any geometric information on the boundary, is generally referred to as free boundary problems. This in itself is, of course, a well-studied and rich subject. Unfortunately, the typical techniques arising from minimization of the functionals are both too qualitative and too rigid to treat structural irregularities of  rectifiable sets and such weak assumptions as absolute continuity of harmonic measure. The latter became accessible only recently, with the development of the analysis of singular integrals and similar objects on uniformly rectifiable sets. In particular, the first converse of the F. \& M. Riesz theorem, \cite{7au}, directly relies on the 2012 solution of the David-Semmes conjecture regarding the boundedness of the Riesz transforms in $L^2$ \cite{NTV}. At the same time, the techniques stemming from such results for the harmonic functions are not amenable to more general operators of the DKP type, again, due to simple yet fundamental algebraic deficiencies: the derivatives of the coefficients do not offer sufficient cancellations. In the first paper of the present sequence, \cite{HMMTZ}, the authors managed to combine ``classical" free boundary blow-up and compactness arguments (originated in geometric measure theory) with scale-invariant harmonic analysis methods to show that the desired uniform rectifiability follows from regularity of elliptic measure whenever the coefficients of the underlying equation exhibit {\it small} oscillations, in the appropriate Carleson measure sense. The smallness condition, while obviously suboptimal, could not be removed directly, for it is essentially built in the nature of the compactness arguments.

The main goal of the present paper is to addresses the conjecture in full generality. We establish
 the {\it equivalence} of the absolute continuity of the elliptic measure with respect to the surface measure and the uniform rectifiability of the boundary of a domain under the {DKP}  condition on the coefficients, thus providing the final, optimal geometric results (given the assumed background 
hypotheses).

In order to achieve this, we rely on an extrapolation argument, a powerful harmonic analysis \Bk self-improvement technique which allows one to ``bootstrap" certain quantitative scale-invariant results under a small constant hypothesis to the general, large-constant case. This requires a seamless transfer of various estimates from the initial domain to a family of special sawtooth subdomains ---one of the major technical challenges of the present proof.  

%To conclude this part of the introduction, we remark that the final resulting class of operators treated in this paper, satisfying the ``large constant" DKP condition is the best possible, as is justified by several counterexamples in Section \ref{optimal}. We will discuss them below right after the detailed statements and definitions.   

We now describe our results in more detail. Throughout the paper we shall work under the assumptions that the domain $\Omega$ is uniform, i.e., open and connected in a quantitative way, and that its boundary is $(n-1)$-Ahlfors regular, that is, $(n-1)$-dimensional in a quantitative way (see Section \ref{s3}). Under these conditions one can, for instance,  show that scale-invariant absolute continuity of harmonic  measure is related to the uniform  rectifiability of the boundary and even to the non-tangential accessibility of the exterior domain:

\begin{theorem}\label{thm:hmu}
Let $\Omega\subset\RR^n$, $n\ge 3$, be a uniform domain (bounded or unbounded) with 
Ahlfors regular boundary \textup{(}see Definitions \ref{def:uniform} and \ref{def:ADR}\textup{)}, set $\sigma=\mathcal{H}^{n-1}|_{\pO}$ 
and let $\omega_{-\Delta}$ denote its associated harmonic measure. The following statements are equivalent:
\begin{enumerate}[label=\textup{(\alph*)}, itemsep=0.2cm] 

\item\label{1-thm:hmu} $\omega_{-\Delta} \in A_\infty(\sigma)$ \textup{(}Definition \ref{def:AinftyHMU}\textup{)}.

\item\label{2-thm:hmu} $\partial\Omega$ is uniformly rectifiable \textup{(}Definition \ref{def:UR}\textup{)}.

\item\label{3-thm:hmu} $\Omega$ satisfies the exterior corkscrew condition \textup{(}see Definition \ref{def:ICC}), hence, in particular, it is a chord-arc domain \textup{(}Definition \ref{def:nta}).
\end{enumerate}
 \end{theorem}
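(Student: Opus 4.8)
The plan is to prove Theorem~\ref{thm:hmu} by establishing the chain of implications
$\ref{2-thm:hmu}\Rightarrow\ref{1-thm:hmu}$, $\ref{1-thm:hmu}\Rightarrow\ref{3-thm:hmu}$, and $\ref{3-thm:hmu}\Rightarrow\ref{2-thm:hmu}$, drawing on results that are by now part of the literature on harmonic measure in rough domains. The implication $\ref{2-thm:hmu}\Rightarrow\ref{1-thm:hmu}$ is the ``direct'' direction: on a uniform domain with Ahlfors regular boundary that is in addition uniformly rectifiable, one knows that harmonic measure is quantitatively absolutely continuous with respect to surface measure; this is the content of the work building on the solution of the David--Semmes problem and the subsequent theory (Hofmann--Martell and collaborators), so I would simply invoke it. The implication $\ref{3-thm:hmu}\Rightarrow\ref{2-thm:hmu}$ is geometric: a uniform domain with Ahlfors regular boundary satisfying the exterior corkscrew condition is an NTA (indeed chord-arc) domain, and for chord-arc domains uniform rectifiability of the boundary is known (again from the Hofmann--Martell program, or it can be extracted from the fact that two-sided corkscrew plus Harnack chains forces the boundary to be UR). So the heart of the matter is $\ref{1-thm:hmu}\Rightarrow\ref{3-thm:hmu}$.

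For $\ref{1-thm:hmu}\Rightarrow\ref{3-thm:hmu}$ the plan is to argue by a compactness/blow-up scheme combined with the maximum principle and the $A_\infty$ hypothesis. Suppose, for contradiction, that $\Omega$ fails the exterior corkscrew condition. Then there is a sequence of boundary points $q_j$ and scales $r_j$ such that the complement of $\Omega$ fails to contain a corkscrew ball of radius $\eta r_j$ at scale $r_j$ near $q_j$, with $\eta_j\to 0$. Rescale and translate, setting $\Omega_j = r_j^{-1}(\Omega - q_j)$, and pass to a subsequential limit $\Omega_\infty$ in an appropriate local Hausdorff sense; by Ahlfors regularity and uniformity, which are scale-invariant, $\Omega_\infty$ is again a uniform domain with Ahlfors regular boundary, and the failure of the exterior corkscrew condition forces $\Omega_\infty$ to have an interior point of $\pOinf$ in the sense that $\RR^n\setminus\Omega_\infty$ has empty interior near the relevant point, i.e. $\pOinf$ essentially separates nothing --- concretely, $\Omega_\infty$ becomes a domain whose boundary, while still $(n-1)$-Ahlfors regular, is ``one-sided'' in a strong way. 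The key analytic input is that the $A_\infty$ property of harmonic measure passes to the limit domain (this is where one uses the quantitative, scale-invariant nature of $A_\infty$, together with convergence of harmonic measures and Green functions under the blow-up), so $\omega_{-\Delta}^{\Omega_\infty}\in A_\infty(\sigma_\infty)$. But on such a degenerate limit domain one can produce an explicit contradiction: the absence of exterior corkscrews allows one to construct a boundary portion of positive surface measure that is null for harmonic measure (e.g. by a barrier argument showing harmonic measure cannot ``see'' a slit or a cusp-like set), violating the mutual absolute continuity forced by $A_\infty$. This is exactly the type of free-boundary blow-up argument the introduction alludes to, now used in a soft way because we are allowed to invoke the already-established equivalences on nicer domains.

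I would organize the $\ref{1-thm:hmu}\Rightarrow\ref{3-thm:hmu}$ step as: (i) set up the blow-up sequence and extract a limit domain, carefully tracking the uniform and Ahlfors regular constants; (ii) prove convergence of harmonic measures $\omega_j\to\omega_\infty$ and of normalized Green functions, using boundary Hölder estimates, Harnack chains, and the comparison principle valid on uniform domains with Ahlfors regular boundary; (iii) deduce $\omega_\infty\in A_\infty(\sigma_\infty)$ from the uniform $A_\infty$ bounds along the sequence; (iv) exhibit the contradiction on $\Omega_\infty$. The main obstacle I expect is step~(ii)--(iii): making the $A_\infty$ constant survive the blow-up requires quantitative control (doubling of $\omega_j$, uniform reverse Hölder inequalities, comparison of $\omega_j$ with the Green function, all with constants independent of $j$) and a careful choice of the reference points at which harmonic measure is based so that they converge to an interior point of $\Omega_\infty$; additionally, one must ensure the limit boundary does not lose dimension (lower Ahlfors regularity can be delicate under Hausdorff limits) so that ``positive surface measure'' in step~(iv) is meaningful. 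Everything else --- the two geometric/``easy'' implications --- is a matter of quoting the appropriate structural theorems about chord-arc and uniformly rectifiable domains.
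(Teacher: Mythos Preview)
The paper does not actually prove Theorem~\ref{thm:hmu}; it quotes it from the literature, attributing the cycle $\ref{1-thm:hmu}\Rightarrow\ref{2-thm:hmu}$ to \cite{HMU}, $\ref{2-thm:hmu}\Rightarrow\ref{3-thm:hmu}$ to \cite{AHMNT}, and $\ref{3-thm:hmu}\Rightarrow\ref{1-thm:hmu}$ to \cite{DJe,Sem}. Your proposed cycle runs in the opposite direction, and your implications $\ref{2-thm:hmu}\Rightarrow\ref{1-thm:hmu}$ (via \cite{HM}) and $\ref{3-thm:hmu}\Rightarrow\ref{2-thm:hmu}$ (chord-arc boundaries are UR) are indeed available in the literature, so those two legs are fine.

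The gap is in your direct blow-up argument for $\ref{1-thm:hmu}\Rightarrow\ref{3-thm:hmu}$, specifically step~(iv). You assert that on the limit domain $\Omega_\infty$, where the exterior corkscrew fails completely near a point, one can ``construct a boundary portion of positive surface measure that is null for harmonic measure (e.g.\ by a barrier argument showing harmonic measure cannot `see' a slit or a cusp-like set)''. This is not how harmonic measure behaves: boundary pieces that are accessible from both sides (slits, two-sided cusps) are typically \emph{over}-charged by harmonic measure relative to $\sigma$, not ignored. There is no soft barrier argument that produces $\omega_\infty(F)=0$ with $\sigma_\infty(F)>0$ merely from the absence of exterior corkscrews; the failure of $A_\infty$ in such situations manifests as failure of reverse H\"older inequalities for the density, not as singularity on an explicit set. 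Proving that $\omega_\infty\notin A_\infty(\sigma_\infty)$ on a uniform domain with Ahlfors regular boundary lacking exterior corkscrews is precisely the content of $\ref{1-thm:hmu}\Rightarrow\ref{3-thm:hmu}$ applied to $\Omega_\infty$, so your step~(iv) is circular. The known route out --- $A_\infty\Rightarrow$ UR via \cite{HMU}, then UR $\Rightarrow$ exterior corkscrew via \cite{AHMNT} --- does not use compactness, and once you invoke it on $\Omega_\infty$ the blow-up is superfluous: you could have applied it to $\Omega$ directly. If you want a genuinely self-contained argument for this leg you would need to reproduce the substantial machinery of \cite{HMU} (layer potentials / Riesz transform bounds leading to UR), not a soft contradiction on a limit domain.
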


 Postponing all the rigorous definitions to Section~\ref{s3}, we remark for the moment that uniform rectifiability is a quantitative version of the notion of rectifiability of the boundary and the Muckenhoupt condition $\omega\in A_\infty(\sigma)$ is, respectively, a quantitative form of the mutual absolute continuity of $\omega$ with respect to $\sigma$. Thus, Theorem \ref{thm:hmu} above is a quantitative form of the rigorous connection between the boundary behavior of harmonic functions and geometric properties of sets that we alluded to above. 
Returning to the ties with Wiener criterion, we point out that the property of the scale invariant absolute continuity of harmonic measure with respect to surface measure, 
at least in the presence of Ahlfors regularity of $\pom$,
is equivalent to the solvability of the Dirichlet problem
with data in some $L^p(\pom)$, with $p<\infty$\footnote{See, e.g., \cite{H}, although the result is folkloric,
	and well known in less austere settings \cite{Ke}.}; thus, such a characterization is in some sense
an analogue of Wiener's criterion for singular, rather than continuous data.

Theorem~\ref{thm:hmu} in the present form appears in \cite[Theorem 1.2]{AHMNT}. That \ref{1-thm:hmu} implies \ref{2-thm:hmu} is the main result in \cite{HMU} (see also \cite{HM4, HLMN});  that \ref{2-thm:hmu} yields \ref{3-thm:hmu} is \cite[Theorem 1.1]{AHMNT}; and the fact that \ref{3-thm:hmu}  implies \ref{1-thm:hmu} was proved in \cite{DJe}, and independently in \cite{Sem}. 
%The latter result was stated for the Laplacian, but the proof extends routinely to second order 
%elliptic operators with constant coefficients. %  (see \cite{HMT1, CHMT}).  

Theorem \ref{thm:hmu} 
and other recent results\footnote{We refer the reader also to recent work of Azzam \cite{Az}, in 
	which the author characterizes the domains with Ahlfors regular boundaries for which $\omega_{-\Delta}\in A_\infty(\sigma)$:  they are precisely the domains with uniformly rectifiable boundary which are semi-uniform in the sense of Aikawa and Hirata \cite{AH};  see also
	\cite{AHMMT, AMT-char, HM3} for related results characterizing $L^p$ solvability in the general
	case that $\omega_{-\Delta}$ need not be doubling.} illuminate how the $A_\infty$ condition\footnote{And also its
	non-doubling version, the weak $A_\infty$ condition.} 
of  harmonic measure is related to the geometry of the domain $\Omega$. Unfortunately, as we pointed out above, their proofs do not extend to the optimal class of operators with variable coefficients. Indeed, the best known results in this direction  pertain to the ``direct" rather than the ``free boundary" problem. A description of the elliptic measure in a given geometric environment, is essentially due to C. Kenig and  J. Pipher. 
In 2001  \cite{KP} C. Kenig and  J. Pipher proved what they referred to as a 1984 Dahlberg conjecture: if $\Omega\subset \RR^n$ is a bounded Lipschitz domain and the 
elliptic matrix $\mathcal{A}$ satisfies the following Carleson measure condition:%(later referred to as Kenig-Pipher condition), that is, 
\begin{equation}\label{KP-cond}
\sup_{\substack{q\in\pO \\ 0< r<\diam(\Omega)} } \frac{1}{r^{n-1}} \iint_{B(q,r) \cap\Omega} 
\bigg(
\sup_{Y\in B(X,\frac{\delta(X)}{2})} |\nabla \mathcal{A}(Y)|^2 \delta(Y)\bigg)dX <\infty,
\end{equation}
where here and elsewhere we write $\delta(\cdot)=\dist(\cdot,\partial\Omega)$, then the 
corresponding elliptic measure $\omega_L\in A_{\infty}(\sigma) $.  
As observed in \cite{HMT1}, 	
one may carry through the proof in \cite{KP}, essentially 
unchanged, with a slightly weakened reformulation of  \eqref{KP-cond}, namely
by assuming, in place of  \eqref{KP-cond}, the following properties: %\ref{H1} and \ref{H2} below. 
\begin{enumerate}[label=(H\arabic*), itemsep=0.2cm]
	\item\label{H1} $\wcalA\in \Lip_{\rm loc}(\Omega)$ and $|\nabla \wcalA|\delta(\cdot) \in L^\infty(\Omega)$, where $\delta(\cdot) := \dist(\cdot,\pO)$.
	\item\label{H2} $|\nabla \wcalA|^2 \delta(\cdot)$ satisfies the Carleson measure assumption:
	\begin{equation}\label{KP-s-relaxed}
	\|\mathcal{A}\|_{\rm Car}:=\sup_{\substack{q\in\pO\\0< r<\diam(\Omega)} } \frac{1}{r^{n-1}} \iint_{B(q,r) \cap\Omega} 
	|\nabla \mathcal{A}(X)|^2 \delta(X)dX <\infty\,.
	\end{equation}	
\end{enumerate}
We shall refer to these hypotheses (jointly) as the \textbf{Dahlberg-Kenig-Pipher  (DKP)  condition}.
Note that each of \ref{H1} and \ref{H2} is implied by \eqref{KP-cond}.

Since properties
\ref{H1} and \ref{H2} are preserved in subdomains, one can use
the method of \cite{DJe} to extend the result of \cite{KP} to chord-arc domains, 
and hence the analogue of \ref{3-thm:hmu} implies \ref{1-thm:hmu} (in Theorem \ref{thm:hmu}) 
holds for operators satisfying the DKP condition.

An attempt to address the ``free boundary" part of the problem that is to prove that  \ref{1-thm:hmu} implies  \ref{2-thm:hmu} or  \ref{3-thm:hmu} 
led, the first, second and fourth authors of the present paper (see \cite{HMT1}) to show that under the same background hypothesis as in Theorem \ref{thm:hmu},  \ref{1-thm:hmu} implies 
 \ref{3-thm:hmu} (and hence also  \ref{2-thm:hmu})
 for elliptic operators with variable-coefficient matrices $\mathcal{A}$ 
 satisfying \ref{H1} and the Carleson measure estimate 
\begin{equation}\label{HMT-CM}
\sup_{\substack{q\in\pO \\ 0< r<\diam(\Omega)} } \frac{1}{r^{n-1}} \iint_{B(q,r) \cap\Omega} 
|\nabla \mathcal{A}(X)|dX <\infty.
\end{equation}

We observe that, in the presence of hypothesis \ref{H1}, 
\eqref{HMT-CM}
implies \eqref{KP-s-relaxed}. The weighted
$W^{1,2}$ Carleson measure
estimate \eqref{KP-s-relaxed} 
 is both weaker, and more natural than the $W^{1,1}$ version \eqref{HMT-CM}.
 For example, operators verifying 
 \eqref{KP-s-relaxed} arise as pullbacks of constant coefficient operators (see \cite[Introduction]{KP}), and also
 in the linearization of ``$A$-harmonic" (i.e., generalized $p$-harmonic) operators (see \cite[Section 4]{LV}).
We also mention in
 passing that a qualitative version of the results in \cite{HMT1} was obtained in \cite{ABHM}.
There are also
related (quantitative) results in \cite{HMM} and \cite{AGMT} that are 
valid in the absence of any connectivity hypothesis.

%hence Theorem  \ref{thm:hmu} remains true for this new class of matrices. We also mention in
% passing that a qualitative version of this fact was obtained in \cite{ABHM}, 
%and that there are
%related (quantitative) results in \cite{HMM} and \cite{AGMT} that are 
%valid in the absence of any connectivity hypothesis.

%Nonetheless, it has remained an open problem to address such issues, assuming
%that the coefficients satisfy only the Dahlberg-Kenig-Pipher condition (specifically, the weighted
%$W^{1,2}$ Carleson measure
%estimate \eqref{KP-s-relaxed}, as opposed to the $W^{1,1}$ version \eqref{HMT-CM}.
%The former is both weaker, and more natural:  for example, operators verifying 
 %\eqref{KP-s-relaxed} arise as pullbacks of constant coefficient operators (see \cite[Introduction]{KP}), and also
 %in the linearization of ``$A$-harmonic" (i.e., generalized $p$-harmonic) operators (see \cite[Section 4]{LV}).

From the geometric measure theory point of view the main motivation for this paper and its companion \cite{HMMTZ} is to understand whether the  elliptic measure of a DKP divergence form elliptic 
operator distinguishes between a rectifiable and a purely unrectifiable boundary. As in Theorem \ref{thm:hmu}, we make the background assumption
that $\Omega\subset \mathbb{R}^n$, $n\ge 3$, is a uniform domain
(see Definition \ref{def:uniform}) 
 with an Ahlfors regular boundary (Definition 
\ref{def:ADR}).
Analytically we consider second order divergence form elliptic
operators, that is, 
$L=-\divg(\mathcal{A}(\cdot)\nabla)$, where $\mathcal{A}= \big( a_{ij}\big)_{i,j=1}^n$ is a (not necessarily symmetric) 
real matrix-valued function on $\Omega$, 
satisfying the usual uniform ellipticity condition 
\begin{equation}\label{def:UE}
 \langle \mathcal{A}(X)\xi, \xi  \rangle \ge \lambda |\xi|^2,
\qquad
| \langle \mathcal{A}(X)\xi, \zeta \rangle | \leq \Lambda |\xi \,|\zeta|, \qquad\text{for all } \xi,\zeta \in\mathbb{R}^n\setminus\{0\}\,,
\end{equation}
for uniform constants $0<\lambda\le \Lambda<\infty$, and for a.e. $X\in\Omega$.
We further assume that $\mathcal{A}$ 
satisfies the Dahlberg-Kenig-Pipher condition, that is, \ref{H1} and \ref{H2}, and, additionally, that the associated 
elliptic measure is an $A_\infty$ weight (see Definition \ref{def:AinftyHMU}) with 
respect to the surface measure $\sigma=\mathcal{H}^{n-1}|_{\pO}$. 
Our goal is to understand how this 
analytic information yields insight on the geometry of the
domain and its boundary. As mentioned above, in \cite{HMMTZ} we considered the case that 
the Carleson condition \ref{H2} holds with sufficiently small constants (i.e, $\|\mathcal{A}\|_{\rm Car}$ is 
small, see \eqref{KP-s-relaxed}). In this paper we prove the general result, that is, the ``large constant'' case in 
which the Carleson condition \ref{H2} is assumed merely to be finite. 
We shall utilize an extrapolation (or bootstrapping) argument to 
pass from the case of small Carleson norm to the general case.

%In \cite{HMMTZ} we considered the case that 
%the Carleson condition \ref{H2} holds with sufficiently small constants (i.e, $\|\mathcal{A}\|_{\rm Car}$ is 
%small, see \eqref{KP-s-relaxed}). In this paper we prove the optimal result, that is, the ``large constant'' case in 
%which the Carleson condition \ref{H2} is assumed merely to be finite. 

Throughout this paper, and unless otherwise specified, by \textit{allowable constants}, 
we mean the dimension $n\geq 3$; the constants involved in the definition of a uniform 
domain, that is, $M, C_1>1$ (see Definition \ref{def:uniform}); the Ahlfors regular constant 
$C_{AR}>1$ (see Definition \ref{def:ADR}); the ratio of the ellipticity constants 
$\Lambda/\lambda \geq 1$ (see \eqref{def:UE}), 
and the $A_{\infty}$ constants $C_0>1$ and 
$\theta\in(0,1)$ (see Definition 
\ref{def:AinftyHMU}).  

%\Or By renormalizing, we shall always assume that $\lambda = 1$; thus the new, renormalized
%$\Lambda$ will be equal to the ratio $\Lambda/\lambda$ of the original ellipticity constants. \Bk \chema{\Or: Do we really need the talk about normalization here? In the other paper it made sense but I believe that here we can and should avoid this}

Our main result is as follows:
\begin{theorem}\label{thm:main}
	Let $\Omega\subset \RR^n$, $n\ge 3$, be a uniform domain with Ahlfors regular boundary and set $\sigma=\mathcal{H}^{n-1}|_{\pO}$. Let $\wcalA$ be a (not necessarily symmetric) uniformly elliptic matrix on $\Omega$ satisfying \textup{\ref{H1}} and \textup{\ref{H2}}. Then the following are equivalent:
\begin{enumerate}[label=\textup{(\arabic*)}, itemsep=0.2cm] 
		\item\label{1-thm-main} The elliptic measure $\omega_L$ associated with the operator $L=-\divg(\wcalA(\cdot)\nabla)$ is of class $A_\infty$ with respect to the surface measure. 
		\item\label{3-thm-main} $\pO$ is uniformly rectifiable.
		\item\label{2-thm-main} $\Omega$ is a chord-arc domain.
		\end{enumerate} 
\end{theorem}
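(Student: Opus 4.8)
The plan is to reduce the whole statement to the single implication \ref{1-thm-main}$\,\Rightarrow\,$\ref{3-thm-main} and then attack that by an extrapolation of Carleson measures built on top of the small-constant theorem of \cite{HMMTZ}. The equivalence \ref{3-thm-main}$\,\Leftrightarrow\,$\ref{2-thm-main}, which for uniform domains with Ahlfors regular boundary is exactly the equivalence of items \ref{2-thm:hmu} and \ref{3-thm:hmu} in Theorem~\ref{thm:hmu}, involves only the geometry of $\Omega$ and not the operator. The implication \ref{2-thm-main}$\,\Rightarrow\,$\ref{1-thm-main} is already essentially available: since \ref{H1} and \ref{H2} pass to subdomains, the Kenig--Pipher argument \cite{KP} on Lipschitz subdomains combined with the localization method of \cite{DJe} yields $\omega_L\in A_\infty(\sigma)$ on any chord-arc domain carrying an operator satisfying the DKP condition, with no restriction on $\|\mathcal{A}\|_{\rm Car}$ (as noted in the excerpt). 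Hence what remains is to show that $\omega_L\in A_\infty(\sigma)$ forces $\pO$ to be uniformly rectifiable, now with $\|\mathcal{A}\|_{\rm Car}=M_0$ possibly large.

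To remove the smallness I would run a stopping-time decomposition. Let $\epsilon_0$ be the threshold from \cite{HMMTZ}: a uniform domain with Ahlfors regular boundary carrying a DKP operator with Carleson norm $<\epsilon_0$ and whose elliptic measure is (weak-)$A_\infty$ has uniformly rectifiable boundary, quantitatively. Driven by the Carleson measure $d\mathfrak{m}_{\mathcal{A}}(X)=|\nabla\mathcal{A}(X)|^2\,\delta(X)\,dX$ and a threshold $\eta=\eta(\epsilon_0,\text{allowable constants})$ to be chosen, for each dyadic cube $Q_0\subset\pO$ let $\mathcal{F}(Q_0)$ be the family of maximal subcubes $Q\subseteq Q_0$ with $\sigma(Q)^{-1}\mathfrak{m}_{\mathcal{A}}(T_Q)>\eta$, $T_Q$ the Carleson box over $Q$; since $\mathfrak{m}_{\mathcal{A}}$ is Carleson with norm $M_0$, the family obeys the packing bound $\sum_{Q\in\mathcal{F}(Q_0)}\sigma(Q)\le C\,M_0\,\eta^{-1}\,\sigma(Q_0)$. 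To the complementary ``good'' cubes one attaches the sawtooth subdomain $\Omega_{\mathcal{F}(Q_0),Q_0}$, the interior of the union of the associated Whitney regions. By the sawtooth calculus (cf.\ \cite{HM4}, \cite{HMMTZ}) this is again a uniform domain with Ahlfors regular boundary, quantitatively; and because on each retained Whitney region the intrinsic distance to $\partial\Omega_{\mathcal{F}(Q_0),Q_0}$ is comparable to $\delta(\cdot)$, while the stopping rule controls the contribution of the newly created (Whitney-face) part of the boundary, the matrix $\mathcal{A}$ still satisfies the DKP condition on $\Omega_{\mathcal{F}(Q_0),Q_0}$ with Carleson norm $\lesssim\eta<\epsilon_0$.

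The step I expect to be the main obstacle — and the genuinely new ingredient — is to transfer the hypothesis $\omega_L\in A_\infty(\sigma)$ from $\Omega$ down to each sawtooth $\Omega_{\mathcal{F}(Q_0),Q_0}$, with constants that do not degenerate as $Q_0$ and $\mathcal{F}(Q_0)$ vary, so that \cite{HMMTZ} applies on the sawtooth and yields uniform rectifiability of $\partial\Omega_{\mathcal{F}(Q_0),Q_0}$. Unlike \ref{H1}--\ref{H2}, quantitative absolute continuity of elliptic measure is \emph{not} inherited by subdomains in general, so a direct mechanism is required; I would proceed in three parts. First, on the ``good'' part $\partial\Omega_{\mathcal{F}(Q_0),Q_0}\cap\pO$, which carries a fixed fraction of the surface measure of $\partial\Omega_{\mathcal{F}(Q_0),Q_0}$, compare $\omega_L^{\Omega_{\mathcal{F}(Q_0),Q_0}}$ (from an interior corkscrew point) with $\omega_L^{\Omega}$ via the maximum principle, Harnack chains, change of pole, and the comparison principle on uniform domains, so that $\omega_L^{\Omega}\in A_\infty(\sigma)$ is inherited there with respect to $\sigma$. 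Second, on the ``interior'' (Whitney-face) part of $\partial\Omega_{\mathcal{F}(Q_0),Q_0}$, which sits at a controlled distance from $\pO$, control $\omega_L^{\Omega_{\mathcal{F}(Q_0),Q_0}}$ directly by interior and boundary De Giorgi--Nash--Moser estimates. Third, reconcile the two on all of $\partial\Omega_{\mathcal{F}(Q_0),Q_0}$ using the Ahlfors regular structure and a summation over the stopping cubes, producing the desired (weak-)$A_\infty$ bound. This transfer is where most of the work will go.

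With uniform rectifiability of every sawtooth boundary in hand, the final step is to assemble uniform rectifiability of $\pO$ itself. On each top cube $Q_0$ the good part $Q_0\setminus\bigcup_{\mathcal{F}(Q_0)}Q$ is (up to Whitney-scale fuzz) an Ahlfors regular subset of the uniformly rectifiable set $\partial\Omega_{\mathcal{F}(Q_0),Q_0}$; iterating the construction inside each $Q\in\mathcal{F}(Q_0)$ and using the packing bound above, one obtains a Corona (Carleson) decomposition of $\pO$ into pieces each coinciding, on a definite fraction, with a uniformly rectifiable set, the stopping cubes forming a Carleson-sparse family. The abstract extrapolation-of-Carleson-measures lemma then converts this structure — a per-scale small-constant statement, stable under the stopping-time construction, together with the Carleson packing of the tops — into uniform rectifiability of $\pO$; the geometric input is the David--Semmes fact that a Corona decomposition into (pieces of) uniformly rectifiable sets produces a uniformly rectifiable set. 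This proves \ref{1-thm-main}$\,\Rightarrow\,$\ref{3-thm-main} and closes the cycle \ref{1-thm-main}$\,\Rightarrow\,$\ref{3-thm-main}$\,\Leftrightarrow\,$\ref{2-thm-main}$\,\Rightarrow\,$\ref{1-thm-main}.
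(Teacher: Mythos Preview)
Your overall architecture is right: reduce to \ref{1-thm-main}$\Rightarrow$\ref{3-thm-main}, push the Carleson norm of $\mathcal{A}$ below the threshold of \cite{HMMTZ} on sawtooth subdomains, transfer $A_\infty$ down to those sawtooths (correctly flagged as the heart of the matter), and then bootstrap. One omission: \cite{HMMTZ} requires a \emph{symmetric} matrix, so the paper first passes from $\mathcal{A}$ to $\mathcal{A}^{\rm sym}$ using \cite{CHMT}, which shows $\omega_L\in A_\infty(\sigma)\Leftrightarrow\omega_{L^{\rm sym}}\in A_\infty(\sigma)$ under \ref{H1}--\ref{H2}; you should insert this reduction.

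The genuine point of divergence is the assembly step. You propose to iterate a stopping-time, obtain for each corona a sawtooth whose boundary is UR, and then invoke a David--Semmes ``corona into UR pieces implies UR'' principle. The paper instead routes through a \emph{scalar} Carleson quantity: for an arbitrary bounded harmonic $u$ on $\Omega$ it sets $\beta_Q=\iint_{U_Q}|\nabla u|^2\delta\,dY$ and $\alpha_Q=\iint_{U_Q^*}|\nabla\mathcal{A}|^2\delta\,dY$, and applies the abstract extrapolation lemma (Theorem~\ref{thm:extrapolation}) to the pair $(m_\alpha,\widetilde m_\beta)$. The small-Carleson hypothesis on $m_\alpha$ over a sawtooth, together with the transferred $A_\infty$, feeds into \cite{HMMTZ} to make the sawtooth chord-arc; then Theorem~\ref{thm:bhc} (UR $\Leftrightarrow$ CME for bounded harmonic functions) gives $\widetilde m_\beta(\DD_{\F,Q})\lesssim\sigma(Q)$. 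The extrapolation lemma then outputs that $\widetilde m_\beta$ is globally Carleson, i.e.\ CME holds for every bounded harmonic $u$ on $\Omega$, and a second application of Theorem~\ref{thm:bhc} yields UR of $\pO$. This buys two things over your route: the black-box extrapolation lemma handles the iteration for you (no need to track a geometric corona hierarchy), and the additive nature of $\widetilde m_\beta$ fits that lemma exactly, whereas ``the good set is a subset of a UR set'' is not the kind of statement the lemma accepts. Your direct-UR assembly is plausible, but making precise the claim that $Q_0\setminus\bigcup_{\F}Q$ sits inside $\partial\Omega_{\F,Q_0}$ in a way that yields a bona fide corona decomposition of $\pO$ (with uniform UR constants on the approximants) is an extra layer of work the paper sidesteps entirely via CME.
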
 

%While we have already discussed the historical background, let us recuperate now that we have a precise statement. 
The equivalence of  \ref{3-thm-main} and \ref{2-thm-main} (under the stated background hypotheses)
was previously known: that \ref{2-thm-main} $\implies$ \ref{3-thm-main} follows from
the main geometric result of \cite{DJe} (namely,
that chord-arc domains can be approximated in a big pieces sense by 
Lipschitz subdomains), and the converse  \ref{3-thm-main}
$\implies$ \ref{2-thm-main}  is proved in \cite{AHMNT}. 
Moreover, as mentioned above, it was also known
that \ref{2-thm-main} $\implies$ \ref{1-thm-main}, and the proof
comprises two main ingredients:
first, that the properties \ref{H1} and \ref{H2} are preserved in subdomains, and therefore
by the result of \cite{KP}\footnote{The formulation in terms of  
 \ref{H1} and \ref{H2} in place of \eqref{KP-cond} appears in \cite{HMT1}, but the result is implicit
in \cite{KP}; see \cite[Appendix A]{HMT1}.}, $\omega_L \in A_\infty (\sigma)$ in a Lipschitz subdomains
of $\Omega$;
and second, by the aforementioned big piece approximation result of  \cite{DJe},
that the $A_\infty$ property may be passed
from Lipschitz subdomains to the original chord-arc domain, by use of the maximum principle and a 
change of pole argument (see \cite{DJe} or, originally, \cite{JK}). 
%It has been known by the work of \cite{DJe}, \cite{Se}, \cite{HMU}, and \cite{AHMNT} 
%that \ref{2-thm-main} and \ref{3-thm-main} are equivalent (under the stated background hypotheses). 
In this paper we close the circle by proving the implication \ref{1-thm-main} $\implies$ \ref{3-thm-main}, thus providing a 
%complete geometric description of a 
characterization of chord-arc domains in terms of the properties of the elliptic measure .

%\steve{\Bl This is the new version of the old paragraph that I had previously highlighted in Red.
%The new version just gives an overall sketch of ideas, without getting into details:  I 
%would prefer to avoid
%too much technical detail in the introduction.  }

The proof is based on the method of ``extrapolation of Carleson measures", by means of which we bootstrap
the small constant case treated in  \cite{HMMTZ}.  This method was first introduced in the work of 
Lewis and Murray \cite{LM}, based on a Corona type construction which has its origins
in the work of Carleson \cite{Ca}, 
and Carleson and Garnett \cite{CG}.  
In order to carry out this procedure, we shall need to transfer the $A_\infty$ property of
elliptic measure, from the original domain to sawtooth subdomains.  This last step is really the 
heart of the proof.

%\comment{Ch: I am not sure if I understand what is in orange and I think that it is confusing... the sawtooths have the quantitative properties for all the scales not for ``suitable'' scales!

%\

%\textbf{Original text:} A fundamental feature of this method is that it reduces 
%matters to verifying appropriate scale-invariant estimates (for us, 
%these will be of Carleson measure type), on certain ``sawtooth" regions. 
%\Or The latter are constructed taking smaller and smaller scales from dyadic Whitney cubes of $\Omega$. They are typically subdomains of the initial domain which  individually possess similar carefully quantified geometric properties at suitable scales, and, most importantly, are such that a controlling Carleson measure is small on them.  
%\Bk

%\

%\textbf{My sugestion:}  \Or  A fundamental feature of this method is that it reduces matters to verifying appropriate scale-invariant estimates (for us, these will be of Carleson measure type), on certain ``sawtooth" regions 
%in which one has smallness of a controlling Carleson measure  (in the present paper, the latter will be \eqref{KP-s-relaxed}). These sawtooth regions are constructed by joining appropriate Whitney subdomains associated with the dyadic cubes that lie above a certain stopping-time family and they
%inherit the quantitative geometric assumptions of the original domain  in a uniform fashion.}

 %\smallskip

We briefly discuss the organization of the paper.
In Section \ref{s3} we present analytic and geometric preliminaries. In Section \ref{S:proof-by-extrapolation} 
we first state three of the key ingredients to be used in the proof, namely:
Theorem \ref{thm:extrapolation} proved in  \cite{HMM}; Theorem \ref{thm:bhc} which comes as 
combination of \cite{HMM}, \cite{GMT}; and Theorem \ref{thm:sca} obtained in \cite{HMMTZ}. 
We then conclude Section \ref{S:proof-by-extrapolation} by outlining 
the rather intricate proof of Theorem \ref{thm:main}, and in the process
reducing matters to two main steps.  The latter are then carried out in two separate sections:
in Section \ref{s:small-extrapolation}, we prove a technical estimate showing that
a continuous parameter Carleson measure, restricted to a sawtooth subdomain, 
may be controlled quantitatively by a discretized version of itself.
Section \ref{sect:trsf} contains the most delicate technical part of the proof, involving transference of the
$A_\infty$ property to sawtooth subdomains. In Section \ref{optimal} we discuss the optimality of the results and present 
an important corollary.
% \chema{Moved what ZZ added to here} 
In particular, in Corollary \ref{cOsc} we show that Theorem \ref{thm:main} remains true when we replace the assumptions \ref{H1} and \ref{H2} by weaker assumptions involving the oscillation of the elliptic matrix in place of its gradient. \Bk

\medskip

{\bf Acknowledgments:} The authors would like to express their gratitude to Bruno Giuseppe Poggi Cevallos who pointed out that the examples in \cite{MM} could be used to access the optimality of our results. See Proposition \ref{p:bruno}. They would also like to thank MSRI for its hospitality during the Spring of 2017, all the authors were in residence there when this work was started. 
%\Bk \chema{\Or This  has been mentioned already in the footnote in the first page. Should we move our mention to Bruno to there?}

\section{Preliminaries}\label{s3}

\subsection{Definitions}
\begin{defn}\label{def:ADR}
	We say a closed set $E\subset \RR^n$ is \textbf{Ahlfors regular} with constant $C_{AR}>1$ if for any $q\in E$ and $0<r<\diam(E)$,
	\[ C_{AR}^{-1}\, r^{n-1} \leq \mathcal{H}^{n-1}(B(q,r)\cap E) \leq C_{AR}\, r^{n-1}. \]
\end{defn}

There are many equivalent characterizations of a uniformly rectifiable set, see \cite{DS2}. Since uniformly rectifiability is not the main focus of our paper, we only state one of the geometric characterizations as its definition. 
\begin{defn}\label{def:UR}
	An Ahlfors regular set $E\subset \RR^n$ is said to be \textbf{uniformly rectifiable}, if it has big pieces of Lipschitz images of $\RR^{n-1}$. That is, there exist $\widetilde\theta, \widetilde M>0$ such that for each $q\in E$ and $0<r<\diam(E)$, there is a Lipschitz mapping $\rho: B^{n-1}(0, r) \to \RR^n$ such that $\rho$ has Lipschitz norm $\leq \widetilde M$ and 
	\[ \mathcal{H}^{n-1} \left( E\cap B(q,r) \cap \rho(B^{n-1}(0,r)) \right) \geq \widetilde\theta r^{n-1}. \]
	Here $B^{n-1}(0,r)$ denote a ball of radius $r$ in $\RR^{n-1}$.
\end{defn}

\begin{defn}\label{def:ICC}
An open set $\Omega\subset\mathbb{R}^n$ is said to satisfy the \textbf{\textup{(}interior\textup{)} corkscrew condition} \textup{(}resp. the exterior corkscrew condition\textup{)} with constant $M>1$ if for every $q\in\pO$ and every $0< r<\diam(\Omega)$, there exists $A=A(q,r) \in \Omega$ \textup{(}resp. $A\in \Omega_{\rm ext}:=\mathbb{R}^n\setminus\overline{\Omega}$\textup{)} such that
 \begin{equation}\label{eqn:nta-M}
 	B\left(A, \frac{r}{M} \right) \subset B(q,r) \cap \Omega
	\qquad
	\Big(\mbox{resp. }B\left(A, \frac{r}{M} \right) \subset B(q,r) \cap \Omega_{\rm ext}.
	\Big)
	 \end{equation} 
	 The point $A$ is called  a Corkscrew point (or a non-tangential point)  relative to $\Delta(q,r)=B(q,r)\cap\pom$ in $\Omega$ (resp. $\Omega_{\rm ext}$).
\end{defn}

\begin{defn}\label{def:HCC}
An open connected set $\Omega\subset\mathbb{R}^n$ is said to satisfy the \textbf{Harnack chain condition} with constants $M, C_1>1$ if for every pair of points $A, A'\in \Omega$
there is a chain of balls $B_1, B_2, \dots, B_K\subset \Omega$ with $K \leq  M(2+\log_2^+ \Pi)$ that connects $A$ to $A'$,
where
\begin{equation}\label{cond:Lambda}
\Pi:=\frac{|A-A'|}{\min\{\delta(A), \delta(A')\}}.
\end{equation} 
Namely, $A\in B_1$, $A'\in B_K$, $B_k\cap B_{k+1}\neq\emptyset$ and for every $1\le k\le K$
\begin{equation}\label{preHarnackball}
 	C_1^{-1} \diam(B_k) \leq \dist(B_k,\partial\Omega) \leq C_1 \diam(B_k).
\end{equation}
         \end{defn}

We note that in the context of the previous definition if $\Pi\le 1$ we can trivially form the Harnack chain $B_1=B(A,3\delta(A)/5)$ and $B_2=B(A', 3\delta(A')/5)$ where \eqref{preHarnackball} holds with $C_1=3$. Hence the Harnack chain condition is non-trivial only when $\Pi> 1$.

\begin{defn}\label{def:uniform}
An open connected set $\Omega\subset\RR^n$ is said to be a \textbf{uniform} domain with constants $M, C_1$,  if it satisfies the interior corkscrew condition with constant $M$ and the Harnack chain condition with constants $M, C_1$.
\end{defn}

\begin{defn}\label{def:nta}
A uniform domain $\Omega\subset\RR^n$ is said to be \textbf{NTA} if it satisfies the exterior corkscrew condition. If one additionally assumes that $\partial\Omega$ is Ahlfors regular, the $\Omega$ is said to be a \textbf{chord-arc} domain.
\end{defn}

For any $q\in\pO$ and $r>0$, let $\Delta=\Delta(q,r)$ denote the surface ball $B(q,r) \cap \pO$, and let $T(\Delta)=B(q,r)\cap\Omega$ denote the Carleson region above $\Delta$. We always implicitly assume that $0<r< \diam ( \Omega)$. We will also write $\sigma=\mathcal{H}^{n-1}|_{\pO}$.

Given an open connected set $\Omega$ and an elliptic operator $L$ we let $\{\omega_L^X\}_{X\in\Omega}$ be the associated elliptic measure.  In the statement of our main result we assume that $\omega_L \in A_{\infty}(\sigma)$ in the following sense:

\begin{defn}\label{def:AinftyHMU}
	The elliptic measure associated with $L$ in $\Omega$ is said to be of class $A_{\infty}$ with respect to the surface measure $\sigma= \mathcal{H}^{n-1}|_{\pO}$, which we denote by $\omega_L\in A_\infty(\sigma)$, if there exist $C_0>1$ and $0<\theta<\infty$ such that for any surface ball $\Delta(q,r)=B(q,r)\cap \pO$, with $x\in\partial\Omega$ and $0<r<\diam (\Omega)$, any surface ball $\Delta'=B'\cap \pO$ centered at $\pO$ with $B'\subset B(q,r)$, and any Borel set $F\subset \Delta'$, the elliptic measure with pole at $A(q,r)$ (a corkscrew point relative to $\Delta(q,r)$) satisfies
	   \begin{equation}\label{eqn:Ainfty}
		\frac{\omega_L^{A(q,r)}(F)}{\omega_L^{A(q,r)}(\Delta')} \leq C_0\left( \frac{\sigma(F)}{\sigma(\Delta')} \right)^{\theta}.
	\end{equation}
We may refer to $(C_0,\theta)$ as the $A_\infty$ constants of $\omega_L$ with respect to $\sigma$.
\end{defn}

Since $\sigma$ is a doubling measure, it is well-known that the condition $\omega_L\in A_\infty(\sigma)$ 
is equivalent to the fact that $\omega_L\in RH_q(\sigma)$ for some $q>1$ in the following sense: $\omega_L \ll \sigma$ and the
Radon-Nikodym derivative $\textbf{k}_L:= d\omega_L/d\sigma$ satisfies the reverse H\"older estimate
\begin{equation}\label{eq1.wRH}
\left(\fint_{\Delta'} \big(\textbf{k}_L^{A(q,r)}\big)^q d\sigma \right)^{\frac1q} \lesssim \fint_{\Delta'} \textbf{k}_L^{A(q,r)} \,d\sigma\,
=\frac{\omega_L^{A(q,r)}(\Delta')}{\sigma(\Delta')}\,,
\end{equation}
for all  $\Delta(q,r)=B(q,r)\cap \pO$, with $x\in\partial\Omega$ and $0<r<\diam (\Omega)$, any surface ball $\Delta'=B'\cap \pO$ centered at $\pO$ with $B'\subset B(q,r)$.

The constants $0<\lambda\le \Lambda$ from \eqref{def:UE}, $C_{AR}$ from Definition \ref{def:ADR}, $M$, $C_1$ from Definition \ref{def:uniform} (see also Definitions \ref{def:ICC} and \ref{def:HCC}),  and $C_0$ and $\theta$ from Definition \ref{def:AinftyHMU}  are referred to as the allowable constants.

\subsection{Construction of sawtooth domains and Discrete Carleson measures}\label{section:sawtooth}

\begin{lemma}[Dyadic decomposition of Ahlfors regular set, \cite{DS1, DS2, Ch}]\label{lm:ddAR}
	Let $E\subset \RR^n$ be an Ahlfors regular set. Then there exist constants $a_0, A_1, \gamma>0$, depending only on $n$ and the constants of Ahlfors regularity, such that for each $k\in\mathbb{Z}$, there is a collection of Borel sets (``dyadic cubes'')
	\[ \mathbb{D}_k : = \{Q_j^k \subset E: j\in \mathscr{J}_k \}, \]
	where $\mathscr{J}_k$ denotes some index set depending on $k$, satisfying the following properties.
\begin{enumerate}[label= \textup{(\roman*)}, itemsep=0.2cm] 		%\begin{enumerate}[(i)]
		\item\label{1-lm:ddAR} $E = \bigcup_{j\in\mathscr{J}_k} Q_j^k$ for each $k\in\mathbb{Z}$.
		
		\item\label{2-lm:ddAR} If $m\geq k$ then either $Q_i^m \subset Q_j^k$ or $Q_i^m \cap Q_j^k = \emptyset$.
		
		\item\label{3-lm:ddAR} For each pair $(j,k)$ and each $m<k$, there is a unique $i\in\mathscr{J}_m$ such that $Q_j^k \subset Q_i^m$.
		
		\item\label{4-lm:ddAR} $\diam Q_j^k \leq A_1 2^{-k}$.
		
		\item\label{5-lm:ddAR} Each $Q_j^k$ contains some surface ball $\Delta(x_j^k, a_0 2^{-k}) := B(x_j^k, a_0 2^{-k}) \cap E$.
		
		\item\label{6-lm:ddAR} For all $(j, k)$ and all $\rho\in (0,1)$ 
		\begin{multline}\label{thin-boundary}
		\mathcal{H}^{n-1} \left( \left\{ q\in Q_j^k: \dist(q, E\setminus Q_j^k) \leq \rho 2^{-k} \right\} \right)
		\\
		 + \mathcal{H}^{n-1} \left( \left\{ q\in E\setminus Q_j^k: \dist(q, Q_j^k) \leq \rho 2^{-k} \right\} \right) \leq A_1 \rho^{\gamma} \mathcal{H}^{n-1} (Q_j^k).
\end{multline}
\end{enumerate}
	We shall denote by $\mathbb{D} = \mathbb{D}(E)$ the collection of all relevant $Q_j^k$, i.e.,
	\begin{equation}\label{all-cubes}
	\DD = \bigcup_k \DD_k, 
	\end{equation}
	where, if $\diam (E)$ is finite, the union runs 	over those $k$ such that $2^{-k} \lesssim  {\rm diam}(E)$.
\end{lemma}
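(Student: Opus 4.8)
The plan is to carry out the David--Christ construction of a dyadic lattice adapted to $E$ (as in \cite{DS1, DS2, Ch}), and then to verify the six listed properties, the last of which is the only non-routine point. First I would fix the scales $2^{-k}$ and, for each $k\in\ZZ$ --- only those with $2^{-k}\lesssim\diam(E)$ when $\diam(E)<\infty$ --- choose by a greedy (Zorn) argument a maximal $2^{-k}$-separated subset $X_k=\{x_j^k\}_{j\in\mathscr{J}_k}$ of $E$. Maximality gives the covering $E=\bigcup_j B(x_j^k,2^{-k})$, separation gives that the balls $B(x_j^k,2^{-k-1})$ are pairwise disjoint, and together with Ahlfors regularity this yields bounded overlap of $\{B(x_j^k,C2^{-k})\}_j$ for each fixed $C$, as well as the two-sided bound $\mathcal{H}^{n-1}(B(x_j^k,r)\cap E)\approx r^{n-1}$ for $0<r\lesssim 2^{-k}$, which will be needed for \ref{5-lm:ddAR}.

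Next I would impose a tree structure on $\bigcup_k X_k$: fix a well-ordering of each $X_k$, and for $x_i^{k+1}\in X_{k+1}$ let its parent be the $\leq$-least element of $X_k$ of minimal distance to $x_i^{k+1}$ (the parent lies within $2^{-k}$ by the covering property). Iterating, every $x_i^m$ has a unique $k$-ancestor for each $k\le m$, and ``being an ancestor'' is transitive. I would then set $Q_j^{k,\circ}:=\{x_i^m:m\ge k,\ x_j^k \text{ is the }k\text{-ancestor of }x_i^m\}$ and define $Q_j^k$ either as $\overline{Q_j^{k,\circ}}$ (an almost-partition, with overlaps only along lower-dimensional pieces) or, to get the honest Borel partition asserted in \ref{1-lm:ddAR}, by assigning each $x\in E$ at each scale to a single net point via a tie-broken nearest-point rule coherent with the tree. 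With the cubes in hand, \ref{1-lm:ddAR}, \ref{2-lm:ddAR}, \ref{3-lm:ddAR} are immediate from the tree; \ref{4-lm:ddAR} holds because every point of $Q_j^{k,\circ}$ lies within $\sum_{m\ge k}2^{-m}\lesssim 2^{-k}$ of $x_j^k$; and \ref{5-lm:ddAR} follows because, for $a_0$ small, a point $a_0 2^{-k}$-close to $x_j^k$ is assigned to a descendant of $x_j^k$ at every scale $m\ge k$ (using separation of the nets), so $\Delta(x_j^k,a_0 2^{-k})\subset Q_j^k$, which is a nondegenerate piece by the lower Ahlfors bound.

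The substantive point is the thin-boundary estimate \ref{6-lm:ddAR}. Fixing $(j,k)$ and $\rho\in(0,1)$, I would choose $\ell\in\NN$ with $2^{-\ell}\approx\rho$, and observe that a point $q\in Q_j^k$ with $\dist(q,E\setminus Q_j^k)\le\rho2^{-k}$ lies in some scale-$(k+\ell)$ subcube $Q_i^{k+\ell}\subset Q_j^k$ that is within $C\rho2^{-k}$ of $E\setminus Q_j^k$, hence ``boundary-adjacent'' in the tree. Then, running over the intermediate scales $k+1,\dots,k+\ell$, one wants to show that the $\mathcal{H}^{n-1}$-mass carried by boundary-adjacent subcubes of $Q_j^k$ at scale $k+s$ is at most a fixed fraction $(1-c)$ of that at scale $k+s-1$, with $c=c(n,C_{AR})>0$ --- each boundary-adjacent subcube at the coarser scale contains, by Ahlfors regularity and bounded overlap, a definite proportion of its mass in non-boundary-adjacent sub-subcubes. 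Iterating yields geometric decay $(1-c)^{\ell}\lesssim\rho^{\gamma}$ with $\gamma=\gamma(n,C_{AR})>0$, which gives the first term of \ref{6-lm:ddAR}. The exterior layer $\{q\in E\setminus Q_j^k:\dist(q,Q_j^k)\le\rho2^{-k}\}$ I would treat symmetrically: writing $E\setminus Q_j^k=\bigsqcup_{i\ne j}Q_i^k$, only the boundedly many $Q_i^k$ adjacent to $Q_j^k$ contribute; each contributes at most $A_1\rho^{\gamma}\mathcal{H}^{n-1}(Q_i^k)$ by the interior estimate, and adjacent cubes have comparable $\mathcal{H}^{n-1}$-measure by Ahlfors regularity, so the total is $\lesssim\rho^{\gamma}\mathcal{H}^{n-1}(Q_j^k)$. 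The hard part will be making the ``loses a fixed fraction per sub-scale'' step precise --- in particular, pinning down how the boundary of $Q_j^k$ interacts with the boundaries of its siblings and with the ancestor chains of neighbouring net points; everything else is bookkeeping on the tree, and if one prefers one may instead simply cite the constructions in \cite{DS1, DS2, Ch}.
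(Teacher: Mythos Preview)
The paper does not prove this lemma; it is stated with attribution to \cite{DS1, DS2, Ch} and used as a black box, so there is no in-paper argument to compare against. Your outline is the standard Christ/David--Semmes construction and is correct in spirit: the net-and-tree construction together with Ahlfors regularity does give \ref{1-lm:ddAR}--\ref{5-lm:ddAR} routinely, and the geometric-decay argument for \ref{6-lm:ddAR} via iterated ``loss of a fixed fraction per sub-scale'' is exactly the mechanism used in the cited references. If you want to turn this into a self-contained proof rather than a sketch, the only place requiring real care is the one you flag yourself---making precise the claim that each boundary-adjacent subcube at scale $k+s-1$ contains a definite-proportion non-boundary-adjacent child at scale $k+s$---but that is handled in \cite{Ch} by a porosity/small-boundary argument, and you may simply cite it as the paper does.
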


\begin{remark}
	For a dyadic cube $Q\in \mathbb{D}_k$, we shall
set $\ell(Q) = 2^{-k}$, and we shall refer to this quantity as the ``length''
of $Q$.  Evidently, $\ell(Q)\approx \diam(Q).$	We will also write $x_Q$ for the ``center'' of $Q$, that is, the center of the ball appearing in \ref{5-lm:ddAR}. 
\end{remark}

Assume from now on that $\Omega$ is a uniform domain with Ahlfors regular boundary and set $\sigma=\mathcal{H}^{n-1}|_{\pO}$. Let $\dd=\dd(\pom)$ be the associated dyadic grid from the previous result. 

Let $\mathcal{W}=\W(\Omega)$ denote a collection
of (closed) dyadic Whitney cubes   of $\Omega$ (just dyadically divide the standard Whitney cubes from \cite[Chapter VI]{St} into cubes with side length 1/8 as large),  so that the boxes  in $\mathcal{W}$
form a covering of $\Omega$ with non-overlapping interiors, and  which satisfy
\begin{equation}\label{eqWh1} 4\, {\rm{diam}}\,(I)\leq \dist(4 I,\pom) \leq  \dist(I,\pom) \leq 40 \, {\rm{diam}}\,(I).
\end{equation}
Let $X(I)$ denote the center of $I$, let $\ell(I)$ denote the side length of $I$,
and write $k=k_I$ if $\ell(I) = 2^{-k}$. We will use ``boxes'' to refer to the Whitney cubes as just constructed, and ``cubes'' for the dyadic cubes on $\pO$. 
Then for each pair $I, J \in \W$,
\begin{equation}\label{eq:Iequivsize}
\text{ if } I\cap J \neq \emptyset, \text{ then } 4^{-1} \leq \frac{\ell(I)}{\ell(J)} \leq 4.
\end{equation}
Since $I, J$ are dyadic boxes, then $I\cap J$ is either contained in a face of $I$, or contained in a face of $J$. 
By choosing $\tau_0<2^{-10}$  sufficiently small (depending on $n$), we may also suppose that there is $t\in(\frac12,1)$ so that if $0<\tau<\tau_0$, for every distinct pair $I, J\in \W(\Omega)$,
\begin{equation}\label{eq:tausmall1}
(1+4\tau) I \cap (1+4\tau) J \neq \emptyset \iff I\cap J \neq \emptyset;
\end{equation}
and
\begin{equation}\label{eq:tausmall2}
	t J \cap (1+ 4\,\tau) I = \emptyset.
\end{equation}
Also, $J\cap (1+\tau )I$ contains an $(n-1)$-dimensional cube with side length of the order of $\min\{\ell(I), \ell(J)\}$. This observation will become useful in Section \ref{sect:trsf}.  For such $\tau\in (0,\tau_0)$ fixed, we write $I^*=(1+\tau)I$, $I^{**}= (1+2\tau) I$, and $I^{**}= (1+4\tau) I$ for the ``fattening'' of $I\in \W$.

Following \cite[Section 3]{HM} we next introduce the notion of \textit{\bf Carleson region} and \textit{\bf discretized sawtooth}.  Given a cube $Q\in\dd$, the \textit{\bf discretized Carleson region $\dd_{Q}$} relative to $Q$ is defined by
\[
\dd_{Q}=\{Q'\in\dd:\, \, Q'\subset Q\}.
\]
Let $\F$ be family of disjoint cubes $\{Q_{j}\}\subset\dd$. The \textit{\bf global discretized sawtooth region} relative to $\F$ is the collection of cubes $Q\in\dd$  that are not contained in any $Q_{j}\in\F$;
\[
\dd_{\F}:=\dd\setminus \bigcup\limits_{Q_{j}\in\F}\dd_{Q_{j}}.
\]
For a given $Q\in\dd$ the {\bf local discretized sawtooth region} relative to $\F$ is the collection of cubes in $\dd_{Q}$ that are not in contained in any $Q_{j}\in\F$;
\begin{equation}\label{dfq}
\dd_{\F,Q}:=\dd_{Q}\setminus \bigcup\limits_{Q_{j}\in \F} \dd_{Q_{j}}=\dd_{\F}\cap \dd_{Q}.
\end{equation}
We also introduce the ``geometric'' Carleson and sawtooth regions. For any dyadic cube $Q\in\DD$, pick two parameters $\eta\ll 1$ and $K\gg 1$, and define
\begin{equation}\label{def:WQ0}
	\WW_Q^0 : = \{I\in\WW: \eta^{\frac{1}{4}} \ell(Q) \leq \ell(I) \leq K^{\frac{1}{2}} \ell(Q),\  \dist(I,Q) \leq K^{\frac{1}{2}} \ell(Q)\}.
\end{equation} 
Taking $K\ge 40^2 n$, if $I\in\W$ and we pick $Q_I\in \dd$ so that $\ell(Q_I)=\ell(I)$ and $\dist(I,\pom)=\dist(I,Q_I)$, then $I\in \W^0_{Q_I}$. 
Let $X_Q$ denote a corkscrew point for the surface ball $\Delta(x_Q, r_Q/2)$. We can guarantee that $X_Q $ is in some $I\in\WW_Q^0$ provided we choose $\eta$ small enough and $K$ large enough. For each $I\in\WW_Q^0$, there is a Harnack chain connecting $X(I)$ to $X_Q$, we call it $\mathcal{H}_I$. By the definition of $\WW_Q^0$ we may construct this Harnack chain so that it consists of a bounded number of balls (depending on the values of $\eta, K$). We let $\WW_Q$ denote the set of all $J\in\WW$ which meet at least one of the Harnack chains $\mathcal{H}_I$, with $I\in \WW_Q^0$, i.e.
\begin{equation}
	\WW_Q := \{J\in\WW: \text{there exists } I\in\WW_Q^0 \text{ for which } \mathcal{H}_I \cap J \neq \emptyset\}.
\end{equation}
Clearly $\WW_Q^0 \subset \WW_Q$. Besides, it follows from the construction of the augmented collections $\WW_Q$ and the properties of the Harnack chains that there are uniform constants $c$ and $C$ such that
\begin{equation}\label{def:WQ}
	c\eta^{\frac12 } \ell(Q) \leq \ell(I) \leq CK^{\frac{1}{2}} \ell(Q), \quad \dist(I,Q) \leq CK^{\frac{1}{2}} \ell(Q) 
\end{equation}
for any $I\in \WW_Q$. In particular once $\eta, K$ are fixed, for any $Q\in \DD$ the cardinality of $\WW_Q$ is uniformly bounded.
Finally, for every $Q$ we define its associated Whitney region
\begin{equation}\label{eq2.whitney3}
U_Q := \bigcup_{I\in\,\mathcal{W}_Q} I^*.
\end{equation}

We refer the reader to \cite[Section 3]{HM} or \cite[Section 2]{HMM2} for additional details.

For a given $Q\in\dd$, the {\bf Carleson box} relative to $Q$ is defined by
\begin{equation}\label{carleson-box}
T_{Q}:=\mbox{int}\left(\bigcup\limits_{Q'\in\dd_{Q}} U_{Q'}\right).
\end{equation}
For a given family $\F$ of disjoint cubes $\{Q_{j}\}\subset\dd$ and a given $Q\in\dd$ we define the {\bf local sawtooth region} relative to $\F$ by
\[
\Omega_{\F,Q}:=\mbox{int}\left(\bigcup\limits_{Q'\in\dd_{\F,Q}}U_{Q'}\right)
=
{\rm int }\,\left(\bigcup_{I\in\,\W_{\F,Q}} I^*\right),
\]
where $\W_{\F,Q}:=\bigcup_{Q'\in\dd_{\F,Q}}\W^*_{Q'}$.
Analogously, we can slightly fatten the Whitney boxes and use $I^{**}$ to define new fattened Whitney regions and sawtooth domains. More precisely,
\begin{equation}
T_{Q}^*:=\mbox{int}\left(\bigcup\limits_{Q'\in\dd_{Q}} U_{Q'}^*\right),
\qquad
\Omega_{\F,Q}^*:=\mbox{int}\left(\bigcup\limits_{Q'\in\dd_{\F,Q}}U_{Q'}^*\right)
,
\qquad
U_{Q'}^*:=\bigcup_{I\in\,\mathcal{W}^*_{Q'}} I^{**}.
\label{eq:fatten-objects}
\end{equation}
Similarly, we can define $T_Q^{**}$, $\Omega_{\F,Q}^{**}$ and $U_{Q}^{**}$ by using $I^{***}$ in place of $I^{**}$.

One can easily see that there is a constant $\kappa_0>0$ (depending only on the allowable parameters, $\eta$, and $K$)  so that 
\begin{equation}\label{tent-Q}
T_Q\subset T_{Q}^*\subset T_{Q}^{**}\subset\overline{T_Q^{**}}\subset \kappa_0 B_Q\cap\overline{\Omega}
=: B_Q^*\cap \overline{\Omega}, \qquad\forall\,Q\in\dd.
\end{equation}

Given a pairwise disjoint family $\F\subset\dd$ (we also allow $\F$ to be the null set) and a constant $\rho>0$, we derive another family $\F({\rho})\subset\dd$  from $\F$ as follows. Augment $\F$ by adding cubes $Q\in\dd$ whose side length $\ell(Q)\leq \rho$ and let $\F(\rho)$ denote the corresponding collection of maximal cubes with respect to the inclusion. Note that the corresponding discrete sawtooth region $\dd_{\F(\rho)}$ is the union of all cubes $Q\in\dd_{\F}$ such that $\ell(Q)>\rho$.
For a given constant $\rho$ and a cube $Q\in \dd$, let $\dd_{\F(\rho),Q}$ denote the local discrete sawtooth region and let $\Omega_{\F(\rho),Q}$ denote the local geometric sawtooth region relative to disjoint family $\F(\rho)$.

Given $Q\in\dd$ and $0<\epsilon<1$, if we take $\F_0=\emptyset$, one has that \
$\F_0(\epsilon \,\ell(Q))$ is the collection of $Q'\in \dd$ such that $\epsilon\,\ell(Q)/2<\ell(Q')\le \epsilon\,\ell(Q)$. We then introduce $U_{Q,\epsilon}=\Omega_{\F_0(\epsilon \,\ell(Q)),Q}$, which is a Whitney region relative to $Q$ whose distance to $\pom$ is of the order of $\epsilon\,\ell(Q)$.  For later use, we observe that given $Q_0\in\dd$, the sets $\{U_{Q,\epsilon}\}_{Q\in\dd_{Q_0}}$ have bounded overlap with constant that may depend on $\epsilon$. Indeed, suppose that there is $X\in U_{Q,\epsilon}\cap U_{Q',\epsilon}$ with $Q$, $Q'\in \dd_{Q_0}$. By construction $\ell(Q)\approx_\epsilon \delta(X)\approx_\epsilon \ell(Q')$ and
$$
\dist(Q,Q')\le \dist(X,Q)+\dist(X,Q')\lesssim_\epsilon \ell(Q)+\ell(Q')\approx_\epsilon \ell(Q).
$$
The bounded overlap property follows then at once.

\begin{lemma}[{\cite[Lemma 3.61]{HM}}]\label{lemma:sawtooth-inherit}
	Let $\Omega\subset \RR^n$ be a uniform domain with Ahlfors regular boundary. Then all of its Carleson boxes $T_Q$ and sawtooth domains $\Omega_{\F, Q}$, $\Omega_{\F, Q}^*$  are uniform domains with Ahlfors regular boundaries. In all the cases the implicit constants are uniform, and depend only on
	dimension and on the corresponding constants for $\Omega$. 
\end{lemma}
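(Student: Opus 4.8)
\textbf{Proof proposal for Lemma \ref{lemma:sawtooth-inherit}.}

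The plan is to verify, in turn, each of the three structural properties --- the Ahlfors regularity of the boundary, the interior corkscrew condition, and the Harnack chain condition --- for a Carleson box $T_Q$ and for a sawtooth domain $\Omega_{\F,Q}$ (the fattened version $\Omega_{\F,Q}^*$ being handled by the identical argument, since replacing $I^*$ by $I^{**}$ only changes the dilation parameters by a fixed amount). First I would record the basic geometric comparisons that come directly from the construction in Section \ref{section:sawtooth}: by \eqref{def:WQ}, every Whitney box $I$ appearing in $U_{Q'}$ satisfies $\ell(I)\approx \ell(Q')$ and $\dist(I,Q')\lesssim \ell(Q')$, and by \eqref{eqWh1} we have $\dist(I,\pom)\approx \ell(I)$; together with the containment \eqref{tent-Q}, $T_{Q'}\subset B_{Q'}^*\cap\overline\Omega$, this pins down the location and size of every piece of the sawtooth relative to the cube that generated it. The key point is that for a cube $Q'\in\dd_{\F,Q}$, a boundary ball $\Delta(x_{Q'},c\ell(Q'))$ ``sees'' the sawtooth from inside --- the corkscrew point $X_{Q'}\in U_{Q'}$ is a corkscrew point for $\Omega_{\F,Q}$ as well --- while balls centered at boundary points of $\partial\Omega_{\F,Q}$ that lie in the \emph{interior} of $\Omega$ are handled by ordinary Whitney geometry.

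For the \emph{interior corkscrew condition}: given $p\in\partial\Omega_{\F,Q}$ and $0<r<\diam(\Omega_{\F,Q})$, I would split into two regimes according to whether $\delta(p)=\dist(p,\pom)$ is small or large compared to $r$. If $\delta(p)\lesssim r$, then $p$ is close to a boundary point $q\in\pom$ with $\Delta(q, cr)\cap\pom$ meeting the part of $\pom$ that is ``below'' cubes of $\dd_{\F,Q}$, one finds a cube $Q'\in\dd_{\F,Q}$ with $\ell(Q')\approx r$ and $Q'$ near $q$, and then $X_{Q'}$ (or a Whitney box in $U_{Q'}$) furnishes the desired corkscrew ball inside $\Omega_{\F,Q}$; the key is that thin-boundary estimate \eqref{thin-boundary} and property \ref{5-lm:ddAR} guarantee such a $Q'$ genuinely belongs to the sawtooth and not to the discarded family $\F$. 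If instead $\delta(p)\gg r$, then $B(p,r)$ lies well inside $\Omega$, so $p$ is on the ``interior skeleton'' of the sawtooth, i.e.\ on the boundary of some $I^*$; here the corkscrew ball is produced purely by the Whitney structure \eqref{eq:Iequivsize}, \eqref{eq:tausmall1}, \eqref{eq:tausmall2}, using that adjacent fattened Whitney boxes overlap in a set of comparable size (the observation just before the definition of $I^*$).

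For the \emph{Harnack chain condition}: given $A,A'\in\Omega_{\F,Q}$, each lies in some $I^*$, $J^*$ with $I,J\in\W_{\F,Q}$, hence in the Whitney regions $U_{Q'}$, $U_{Q''}$ of cubes $Q',Q''\in\dd_{\F,Q}$. Using the Harnack chains $\mathcal H_I$ built into the definition of $\W_{Q'}$, I can connect $A$ to the corkscrew point $X_{Q'}$ and $A'$ to $X_{Q''}$ by chains of bounded length staying in the sawtooth. It then remains to connect $X_{Q'}$ to $X_{Q''}$: I would walk along the dyadic ``tree geodesic'' from $Q'$ up to a common ancestor $\widehat Q\in\dd_{\F,Q}$ and back down to $Q''$, at each step passing between Whitney regions of a cube and its dyadic parent --- these regions overlap or are adjacent by construction, and the number of steps is $\lesssim 1+\log_2(\ell(\widehat Q)/\min(\ell(Q'),\ell(Q'')))$, which is controlled by the quantity $\Pi$ in \eqref{cond:Lambda} since $\delta(X_{Q'})\approx\ell(Q')$ and $|X_{Q'}-X_{Q''}|\lesssim\ell(\widehat Q)$. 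The only subtlety is that the common ancestor $\widehat Q$ must itself lie in $\dd_{\F,Q}$: this holds because $\dd_{\F,Q}$ is, by its very definition as a discretized sawtooth, closed under taking dyadic ancestors within $\dd_Q$ --- a cube not contained in any $Q_j\in\F$ has no ancestor contained in a $Q_j$ either.

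For \emph{Ahlfors regularity of $\partial(\Omega_{\F,Q})$}: the upper bound $\mathcal H^{n-1}(B(p,r)\cap\partial\Omega_{\F,Q})\lesssim r^{n-1}$ follows from covering the relevant portion of the boundary by a bounded-overlap family of faces of fattened Whitney cubes of size $\gtrsim r$ together with a piece of $\pom$ of diameter $\lesssim r$ (using $(n-1)$-Ahlfors regularity of $\pom$ and the fact, from \eqref{def:WQ}, that only boundedly many Whitney sizes contribute at scale $r$). The lower bound is where the thin-boundary property \eqref{thin-boundary} does the real work: near a point $p\in\partial\Omega_{\F,Q}$ at distance $\lesssim r$ from $\pom$ one produces, inside $B(p,r)$, either a full face of a Whitney cube of size $\approx r$ or a chunk of $\pom$ of surface measure $\gtrsim r^{n-1}$ that actually survives in $\partial\Omega_{\F,Q}$ --- the latter requires that a definite proportion of a surface ball on $\pom$ is not swallowed by the removed cubes $Q_j\in\F$, which is exactly what \eqref{thin-boundary} (applied to the maximal cubes being removed) prevents from failing. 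I expect this lower Ahlfors bound, and specifically the bookkeeping needed to show that cubes of $\F$ cannot ``eat'' too much of the boundary near a sawtooth boundary point, to be the main technical obstacle; everything else is a fairly mechanical consequence of the Whitney/dyadic combinatorics already set up in Section \ref{section:sawtooth}. Since all comparisons above involve only the allowable constants of $\Omega$, the constants for $T_Q$ and $\Omega_{\F,Q}$ are uniform, completing the proof. (In fact, as the statement indicates, the cleanest route is to cite \cite[Lemma 3.61]{HM}, where precisely this argument is carried out in detail; the sketch above is the structure of that proof.)
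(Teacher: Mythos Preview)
The paper does not prove this lemma at all: it is stated with the citation \cite[Lemma 3.61]{HM} and then used as a black box throughout. Your sketch is a reasonable outline of the argument in \cite{HM}, and your closing parenthetical correctly identifies that citing \cite[Lemma 3.61]{HM} is exactly what the paper does; so there is nothing to compare here --- the ``paper's proof'' is simply the reference.
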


We say that  $\PP$ is a \textbf{fundamental chord-arc subdomain} of $\Omega$ if there is $I\in \W$ and $m_1$ such that 
\begin{equation}\label{fundamental-casd}
\PP=\interior\left( \bigcup_{j=1}^{m_1} I_j^\ast\right) \hbox{ where } I_j\in\W \hbox{  and  } I\cap I_j \not =\emptyset.
\end{equation}
Note that the fact that $I\cap I_j \not =\emptyset$ ensures that $\ell(I)\approx \ell(I_j)$.  Moreover $\PP$ is a chord-arc domain with constants that only depend on $n$, $\tau$ and the constants used in the  construction of $\DD$ and $\W$ (see \cite[Lemma 2.47]{HMU} for a similar argument).

Given a sequence of non-negative numbers 
$\alpha=\{\alpha_Q\}_{Q\in\dd}$ we define the associated discrete ``measure'' $m=m_\alpha$: 
\begin{equation}\label{def:mQ}
	m(\DD') := \sum_{Q\in \DD'} \alpha_Q,
	\qquad \dd'\subset\dd
\end{equation}

\begin{defn}\label{discrete-carleson}
 Let $E\subset \RR^n$ be an Ahlfors regular set, and let  $\sigma$ be a dyadically doubling Borel measure on $E$ (not necessarily equal to $\mathcal{H}^{n-1}|_{\pO}$). We say that $m$ as defined in \eqref{def:mQ} is a discrete Carleson measure with respect to $\sigma$, if 
	\begin{equation}\label{mCarleson}
		\|m\|_{\C}: = \sup_{Q\in \DD} \frac{ m(\DD_Q)}{ \sigma(Q) } <\infty.
	\end{equation}
Also, fixed $Q_0\in\dd$ we say that $m$ is a discrete Carleson measure with respect to $\sigma$ in $Q_0$ if 
\begin{equation}\label{mCarleson-local}
\|m\|_{\C(Q_0)}: = \sup_{Q\in \DD_{Q_0}} \frac{ m(\DD_Q)}{ \sigma(Q) } <\infty.
\end{equation}
\end{defn}

\subsection{Properties of solutions and elliptic measures}

For following lemmas, we always assume that $\mathcal{D}$ is a uniform domain with Ahlfors regular boundary and $\Delta(x,r)$ denotes the surface ball $B(x,r)\cap \partial\mathcal{D}$ centered at $x\in\partial\mathcal{D}$. Let $L=-\divg(\wcalA(\cdot)\nabla)$ be a real uniformly elliptic operator, and we write $\omega=\omega_L$ for the corresponding elliptic measure. Although in our main result we consider non necessarily symmetric uniformly elliptic matrices, we will reduce matters to the symmetric case, in particular all the following properties will be used in that case, hence during this section we assume that $\wcalA$ is symmetric. 
All constants will only depend on the allowable constants, that is, those involved in the fact that the domain in question is uniform and has Ahlfors regularity boundary, and also in the uniform ellipticity of $\wcalA$. In later sections we will apply these lemmas to $\Omega$ as well as its sawtooth domains $\Omega_{\F,Q}$. For a comprehensive treatment of the subject and the proofs we refer the reader to the forthcoming monograph \cite{HMT2} (see also \cite{Ke} for the case of NTA domains). 

\begin{lemma}[Comparison principle]\label{lm:comp}
	Let $u$ and $v$ be non-negative solutions to $Lu=Lv=0$ in $B(x,4r)\cap\mathcal{D}$ which vanish continuously on $\Delta(x,4r)$.  Let $A=A(x,r)$ be a corkscrew point relative to $\Delta(x,r)$. Then
	\begin{equation}\label{P6}
		\frac{u(X)}{v(X)} \approx \frac{u(A)}{v(A)} \quad\text{~for any~} X\in B(x,r)\cap\mathcal{D}. 
	\end{equation} 
\end{lemma}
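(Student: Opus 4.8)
\textbf{Proof proposal for the Comparison principle (Lemma \ref{lm:comp}).}
The plan is to derive the two-sided bound \eqref{P6} from three more basic ingredients, each of which is standard in this setting and may be assumed available from the cited monograph \cite{HMT2}: (i) the boundary Harnack / Carleson estimate, which says that if $w$ is a non-negative solution in $B(x,2r)\cap\mathcal D$ vanishing continuously on $\Delta(x,2r)$, then $w(X)\lesssim w(A(x,r))$ for all $X\in B(x,r)\cap\mathcal D$; (ii) the relationship between such non-negative null solutions and elliptic measure, namely $w(X)\approx w(A(x,r))\,\omega^X(\Delta(x,r))/\omega^{A(x,r)}(\Delta(x,r))$, valid on $B(x,r)\cap\mathcal D$ — this is the usual consequence of (i) together with the CFMS-type estimate $\omega^{A(x,s)}(\Delta(x,s))\approx 1$ and Harnack chains; and (iii) interior Harnack inequality and the corkscrew/Harnack chain structure of the uniform domain $\mathcal D$.

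First I would fix $X\in B(x,r)\cap\mathcal D$ and split into two regimes according to whether $X$ is ``close to'' or ``far from'' the boundary portion $\Delta(x,4r)$. In the interior regime, where $\delta(X)\gtrsim r$ (equivalently $X$ can be joined to the corkscrew point $A=A(x,r)$ by a Harnack chain of bounded length inside $B(x,2r)\cap\mathcal D$), the conclusion $u(X)/v(X)\approx u(A)/v(A)$ is immediate from the interior Harnack inequality applied separately to $u$ and to $v$, since both are positive solutions; the number of balls in the chain is controlled by the uniform-domain constants, so the implicit constant is allowable. The substantive case is the boundary regime, $\delta(X)\ll r$.

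In the boundary regime I would invoke ingredient (ii) for both $u$ and $v$ on the ball $B(x,2r)$ (using that $4r$ is available so the hypotheses of the Carleson and CFMS estimates hold at scale $2r$): there is a point $x^*\in\Delta(x,4r)$ with $|X-x^*|\approx\delta(X)$, and
\[
u(X)\approx u(A)\,\frac{\omega^X(\Delta(x,2r))}{\omega^{A}(\Delta(x,2r))},
\qquad
v(X)\approx v(A)\,\frac{\omega^X(\Delta(x,2r))}{\omega^{A}(\Delta(x,2r))},
\]
with implicit constants depending only on the allowable constants. Dividing these two comparisons, the elliptic-measure factors $\omega^X(\Delta(x,2r))$ and $\omega^{A}(\Delta(x,2r))$ cancel \emph{exactly}, leaving $u(X)/v(X)\approx u(A)/v(A)$, which is precisely \eqref{P6}. (If one prefers to avoid even quoting (ii), one can instead prove it inline: the upper bound $u(X)/v(X)\lesssim u(A)/v(A)$ follows by applying the Carleson estimate (i) to $u$ in the numerator and the lower bound $v(X)\gtrsim v(A)\,\omega^X(\Delta)/\omega^A(\Delta)$ — itself a consequence of the maximum principle comparing $v$ with a multiple of $\omega^{\cdot}(\Delta)$ — in the denominator; the reverse inequality is obtained by interchanging the roles of $u$ and $v$.)

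The main obstacle I anticipate is the boundary regime, and specifically establishing the lower estimate $v(X)\gtrsim v(A)\,\omega^X(\Delta(x,r))/\omega^A(\Delta(x,r))$: this requires the full strength of the maximum principle on the uniform (but merely Ahlfors regular, possibly non-smooth) domain $\mathcal D$, together with the non-degeneracy $\omega^{A(x,s)}(\Delta(x,s))\gtrsim 1$ and a Harnack chain argument to transfer information from a corkscrew point at scale $\delta(X)$ up to $A$ at scale $r$ — the number of chain links here grows like $\log(r/\delta(X))$, so one must check that the elliptic-measure ratios absorb this logarithm, which is exactly what the doubling property of $\omega$ (a consequence of the corkscrew conditions) provides. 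Everything else — the interior case, the cancellation of the $\omega^X$ factors, bookkeeping of constants — is routine. I would present the argument by first stating the Carleson estimate and the CFMS/change-of-pole estimates as cited lemmas, then doing the two-line cancellation above.
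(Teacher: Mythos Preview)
The paper does not actually prove this lemma: it is one of several standard PDE/potential-theoretic facts collected in Section~2.3 with the blanket remark ``for a comprehensive treatment of the subject and the proofs we refer the reader to the forthcoming monograph \cite{HMT2} (see also \cite{Ke} for the case of NTA domains).'' So there is no in-paper argument to compare against; the relevant question is whether your sketch would actually establish the result.

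There is a genuine circularity in your plan. Your ``ingredient (ii)'', the two-sided estimate
\[
w(X)\approx w(A)\,\frac{\omega^X(\Delta(x,r))}{\omega^{A}(\Delta(x,r))}
\]
for any non-negative null solution $w$ vanishing on $\Delta(x,2r)$, is \emph{equivalent} to the comparison principle you are trying to prove (apply it to $u$ and to $v$ and divide --- that is exactly what you do). The usual way one obtains (ii) is by applying the comparison principle to $w$ and to the Green function $G(X_0,\cdot)$ with a far-away pole, and then invoking CFMS; it is not an independent input. Your proposed direct justification of the lower half of (ii) --- ``maximum principle comparing $v$ with a multiple of $\omega^{\cdot}(\Delta)$'' --- fails for a concrete reason: $Y\mapsto\omega^Y(\Delta(x,r))$ equals $1$ on $\Delta(x,r)$, while $v$ vanishes there, so one cannot place $c\,\omega^{\cdot}(\Delta)$ below $v$ on the boundary of $B(x,2r)\cap\mathcal D$. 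The same obstruction blocks the alternative you suggest in parentheses. Your remark at the end, that the Harnack chain from $X$ to $A$ has $\sim\log(r/\delta(X))$ links and that ``doubling of $\omega$ absorbs the logarithm'', is pointing at the right difficulty, but doubling alone does not close the gap; the actual proofs (Jerison--Kenig in the NTA case, Aikawa and the treatment in \cite{HMT2} in the uniform/Ahlfors-regular case) use a genuine iteration/dichotomy argument on dyadic scales to obtain the lower bound for $v$, not a one-shot maximum-principle comparison. The interior regime and the cancellation step are fine once (ii) is in hand; the issue is that (ii) is the theorem.
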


\medskip
 
\begin{lemma}[Non-degeneracy of elliptic measure]\label{lm:Bourgain}
	There exists a constant $C >1$ such that for any $x\in\partial\mathcal{D}$, $0<r<\diam(\partial\mathcal{D})$, we have
	\begin{equation}\label{eq:nondegclose}
		\omega^X(B(x,r)\cap \partial\mathcal{D}) \geq C^{-1} \qquad \text{for } X\in B(x,r/2)\cap \mathcal{D}.
	\end{equation}
\end{lemma}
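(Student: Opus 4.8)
The plan is to prove the non-degeneracy of elliptic measure (often called Bourgain's estimate) using the interior corkscrew condition together with the maximum principle and interior regularity of solutions. The idea is standard: compare the elliptic measure of a surface ball with the Green function, or equivalently, use the fact that a solution vanishing on a large portion of the boundary cannot be too large at an interior corkscrew point.

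First I would fix $x\in\partial\mathcal{D}$ and $0<r<\diam(\partial\mathcal{D})$, and let $A = A(x,r)$ be an interior corkscrew point relative to $\Delta(x,r)$, so that $B(A, r/M)\subset B(x,r)\cap\mathcal{D}$. The key reduction is to show the estimate at the corkscrew point $A$; the full statement for all $X\in B(x,r/2)\cap\mathcal{D}$ then follows from the Harnack chain condition, since any such $X$ can be connected to $A$ by a Harnack chain of bounded length (the length being controlled because $\delta(X)$ and $\delta(A)$ are comparable to $r$ up to the distance $|X-A|\lesssim r$... more precisely one iterates Harnack's inequality along the chain, and $\Pi$ is bounded by an allowable constant when both points sit at definite distance from the boundary; for points $X$ close to the boundary one still gets a one-sided bound which is all that is needed). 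Actually the cleanest route: it suffices to bound $\omega^X$ from below for $X$ ranging over a corkscrew point, then invoke Harnack; but since $X$ may approach $\partial\mathcal{D}$, one should instead argue directly that $u(X):=\omega^X(\mathcal{D}\setminus B(x,r))$ vanishes continuously on $\Delta(x,r)$... hmm, let me restructure.

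The main argument: set $u(X) = \omega^X\big(\partial\mathcal{D}\setminus B(x,r)\big)$. Then $u$ is a bounded solution of $Lu=0$ in $\mathcal{D}$, $0\le u\le 1$, and $u$ vanishes continuously on $\Delta(x,r) = B(x,r)\cap\partial\mathcal{D}$ (by a barrier/regular-point argument, or directly from Hölder continuity of solutions at Ahlfors regular boundary points, using that $\partial\mathcal{D}$ satisfies the capacity density condition which follows from Ahlfors regularity). Consider the competitor $v(X)$, the $L$-harmonic function in $B(x,2r)\cap\mathcal{D}$ with boundary data $1$ on $\partial(B(x,2r)\cap\mathcal{D})\setminus\partial\mathcal{D}$ and $0$ on $\Delta(x,2r)$ — or more simply, use the Caffarelli--Fabes--Mortola--Salsa type estimate: by boundary Hölder continuity there is $\alpha>0$ and $C$ so that $u(X)\le C(\delta(X)/r)^\alpha$ for $X\in B(x,r/2)\cap\mathcal{D}$ with $\delta(X)$ small; but more robustly, I would show $u(A)\le 1 - c$ for a small allowable constant $c>0$ by a direct oscillation estimate: since $u=0$ on a corkscrew-sized chunk of boundary near $x$, the sub-mean-value / Hölder oscillation estimate at Ahlfors regular boundary points forces $u$ to be bounded away from $1$ at the interior corkscrew point $A$. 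Then $\omega^A(B(x,r)\cap\partial\mathcal{D}) = 1 - u(A)\ge c$, giving the claim at $A$. Finally, transfer to general $X\in B(x,r/2)\cap\mathcal{D}$ via the Harnack chain condition applied to the non-negative solution $\omega^{\cdot}(B(x,r)\cap\partial\mathcal{D})$, connecting $X$ to a corkscrew point at scale $\delta(X)$ and then to $A$.

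The main obstacle is establishing that $u(X)=\omega^X(\partial\mathcal{D}\setminus B(x,r))$ vanishes continuously on $\Delta(x,r)$ with a \emph{quantitative} decay rate, i.e., the boundary Hölder continuity of solutions — this is exactly where Ahlfors regularity (ensuring the exterior of $\mathcal{D}$ is "thick enough" in the capacitary sense at every boundary point and every scale) enters, via the capacity density condition and the De Giorgi--Nash--Moser / Maz'ya-type boundary estimate. Since the paper defers such elliptic-theory facts to \cite{HMT2} (and \cite{Ke} for the NTA case), I would simply cite the relevant boundary Hölder continuity lemma there and then run the maximum-principle comparison above; the remaining steps (corkscrew geometry, Harnack chaining) are routine and use only Definitions \ref{def:uniform}, \ref{def:ICC}, and \ref{def:HCC}.
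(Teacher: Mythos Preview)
The paper does not actually prove this lemma: it is stated among the standard properties of elliptic measure in Section~2.3 and the proofs are deferred to \cite{HMT2} (and \cite{Ke} in the NTA case). Your approach is essentially the standard Bourgain argument---Ahlfors regularity implies the capacity density condition, which yields the quantitative boundary oscillation decay (boundary H\"older continuity), and applying this to $u(X)=\omega^X(\partial\mathcal{D}\setminus B(x,r))$ gives $u\le\theta<1$ on $B(x,r/2)\cap\mathcal{D}$---so your core idea matches what one finds in the cited references.

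One correction: the ``Finally, transfer to general $X$ via the Harnack chain condition'' step is both unnecessary and, as written, wrong. If $X\in B(x,r/2)\cap\mathcal{D}$ is very close to $\partial\mathcal{D}$, the Harnack chain from $X$ to the corkscrew point $A$ has length of order $\log(r/\delta(X))$, which is unbounded; applying Harnack along such a chain degrades the lower bound to something that vanishes as $\delta(X)\to 0$. The point is that the boundary oscillation-decay step already gives $u(X)\le\theta$ for \emph{every} $X\in B(x,r/2)\cap\mathcal{D}$ in one stroke---no chaining is required. You had the right instinct midway through (``since $X$ may approach $\partial\mathcal{D}$, one should instead argue directly''), but then reverted to the chaining at the end; just delete that final step and the argument is clean.
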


\medskip

\begin{lemma}[Change of pole formula]\label{lm:cop}
	Let $x\in\partial\mathcal{D}$ and $0<r<\diam(\partial\mathcal{D})$ be given, and let $A = A(x,r)$ be a corkscrew point relative to $\Delta(x,r)$. Let $F, F' \subset \Delta(x,r)$ be two Borel subsets such that $\omega^A(F)$ and $\omega^A(F')$ are positive. Then 
	\begin{equation}
		\frac{\omega^X(F)}{\omega^X(F')} \approx \frac{\omega^A(F)}{\omega^A(F')}, \quad \text{ for any }X\in \mathcal{D} \setminus B(x,2r).
	\end{equation}
	In particular with the choice $F=\Delta(x,r)$, we have
	\begin{equation}
		\frac{\omega^X(F')}{\omega^X(\Delta(x,r))} \approx \omega^A(F') \quad \text{ for any } X\in\mathcal{D} \setminus B(x,2r).
	\end{equation}
\end{lemma}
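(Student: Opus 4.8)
\medskip
\noindent\textbf{Proof strategy.} The two displayed comparisons are equivalent, so I would prove only the second one, in the form: for every Borel set $F'\subset\Delta(x,r)$ with $\omega^A(F')>0$ and every $X\in\mathcal{D}\setminus B(x,2r)$,
\begin{equation*}
\omega^X(F')\approx\omega^X(\Delta(x,r))\,\omega^A(F').
\end{equation*}
Granting this, the general comparison $\omega^X(F)/\omega^X(F')\approx\omega^A(F)/\omega^A(F')$ follows by applying the displayed estimate to $F$ and to $F'$ and dividing; and if $\omega^A(F)=0$ then, since Harnack's inequality along Harnack chains forces $\omega^X\ll\omega^A$, both sides of the general comparison vanish. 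Conversely, the displayed estimate is the case $F=\Delta(x,r)$ of the general comparison, because $\omega^A(\Delta(x,r))\approx1$: the lower bound is Lemma~\ref{lm:Bourgain} together with Harnack's inequality along a bounded Harnack chain from $A$ to a corkscrew point for $\Delta(x,r/2)$, and the upper bound is trivial.

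Fix $F'$ as above and set $u:=\omega^{\cdot}(F')$, $w:=\omega^{\cdot}(\Delta(x,r))$, two nonnegative solutions of $Lu=Lw=0$ in $\mathcal{D}$ with $0\le u\le w\le1$. First I would establish the estimate $u\approx w\,\omega^A(F')$ on the sphere $\mathcal{D}\cap\partial B(x,2r)$, and then propagate it inward by the maximum principle: both $u$ and $w$ vanish continuously at every point of $\partial\mathcal{D}$ lying off $\overline{\Delta(x,r)}$ (standard boundary regularity of elliptic measure in a uniform domain with Ahlfors regular boundary, via Lemma~\ref{lm:Bourgain}), so once $c\,\omega^A(F')\,w\le u\le C\,\omega^A(F')\,w$ is known on $\mathcal{D}\cap\partial B(x,2r)$, the functions $C\omega^A(F')w-u$ and $u-c\omega^A(F')w$ are nonnegative on the whole topological boundary of $\mathcal{D}\setminus\overline{B(x,2r)}$ and $L$-harmonic inside it; since $u,w$ are bounded (and, when $\mathcal{D}$ is unbounded, tend to $0$ at infinity because $F'$ and $\Delta(x,r)$ are bounded), the maximum principle --- applied component by component and, in the unbounded case, through an exhaustion --- gives the same two-sided bound for all $X\in\mathcal{D}\setminus B(x,2r)$.

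For the estimate on $\mathcal{D}\cap\partial B(x,2r)$ I would distinguish two cases for a point $Y$ there. If $\delta(Y)\ge r/10$, then $|Y-A|\le3r$ and $\delta(A)\approx r$, so the ratio $\Pi$ in \eqref{cond:Lambda} is $\lesssim1$ and Definition~\ref{def:HCC} provides a Harnack chain of uniformly bounded length joining $Y$ to $A$; applying the interior Harnack inequality to $u$ and to $w$ \emph{separately} along this chain gives $u(Y)\approx u(A)$ and $w(Y)\approx w(A)$, hence $u(Y)/w(Y)\approx u(A)/w(A)=\omega^A(F')$. If $\delta(Y)<r/10$, pick $Y_0\in\partial\mathcal{D}$ realizing $\delta(Y)$; then $|Y_0-x|>r$, so $B(Y_0,r/2)$ is disjoint from $\overline{\Delta(x,r)}\supset\overline{F'}$, and therefore $u$ and $w$ are nonnegative solutions in $B(Y_0,r/2)\cap\mathcal{D}$ vanishing continuously on $\Delta(Y_0,r/2)$. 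Lemma~\ref{lm:comp}, applied with $(x,r)$ replaced by $(Y_0,r/8)$ and with $Y\in B(Y_0,r/8)\cap\mathcal{D}$, then yields $u(Y)/w(Y)\approx u(A_{Y_0})/w(A_{Y_0})$ for a corkscrew point $A_{Y_0}=A(Y_0,r/8)$; since $\delta(A_{Y_0})\approx r$ and $|A_{Y_0}-A|\lesssim r$, the Harnack-chain argument of the first case again gives $u(Y)/w(Y)\approx\omega^A(F')$. All the implicit constants produced along the way depend only on the allowable constants.

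I expect the main obstacle to be conceptual rather than computational: the ratio $u/w$ is not $L$-harmonic, so Harnack's inequality cannot be used on it directly, while the boundary comparison principle of Lemma~\ref{lm:comp} controls $u/w$ only at points close to $\partial\mathcal{D}$. The device that reconciles these --- reduce to the scale-$r$ sphere by the maximum principle, compare near the boundary, and use a bounded Harnack chain applied to $u$ and $w$ individually to cover the remaining distance to the pole $A$ --- is the crux, and the only genuinely delicate bookkeeping is making the maximum-principle step rigorous when $\mathcal{D}$ is unbounded (decay of $u,w$ at infinity together with an exhaustion argument).
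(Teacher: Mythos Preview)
The paper does not prove Lemma~\ref{lm:cop}; it is listed among the preliminary facts on elliptic measure in uniform domains with Ahlfors regular boundary (Section~2.3), with the proofs explicitly deferred to the monograph~\cite{HMT2} (and to~\cite{Ke} in the NTA setting). So there is no in-paper argument to compare against.

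Your proof is correct and is the classical route one finds in those references. The reduction to the estimate $\omega^X(F')\approx\omega^X(\Delta(x,r))\,\omega^A(F')$ is clean, and the two-case analysis on $\mathcal{D}\cap\partial B(x,2r)$ --- bounded Harnack chain to $A$ applied to $u$ and $w$ \emph{separately} when $\delta(Y)\gtrsim r$, and Lemma~\ref{lm:comp} at a nearby boundary point followed by a bounded Harnack chain from the local corkscrew to $A$ when $\delta(Y)\ll r$ --- is exactly what is needed, with the radii chosen so that the hypotheses of Lemma~\ref{lm:comp} are met. The propagation to $\mathcal{D}\setminus B(x,2r)$ by the maximum principle is also fine: both $u$ and $w$ vanish continuously on $\partial\mathcal{D}\setminus\overline{B(x,r)}$ (boundary regularity in this setting), and in the unbounded case the decay at infinity of $\omega^{\cdot}(E)$ for bounded $E$ together with an exhaustion handles the remaining endpoint. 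Your closing remark correctly identifies the one genuinely delicate point, and the bookkeeping throughout depends only on the allowable constants.
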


\medskip

\begin{lemma}[CFMS estimate]\label{lm:CFMS}
	There exists a constant $C\geq 1$, such that for any $x\in\partial\mathcal{D}$, $0<r<\diam(\partial\mathcal{D})$, and $A = A(x,r)$, a corkscrew point relative to $\Delta(x,r)$, the Green's function $G=G_L$ satisfies
	\begin{equation}\label{eq:CFMS}
		C^{-1} \frac{G(X_0,A)}{r} \leq \frac{\omega^{X_0}(\Delta(x,r))}{r^{n-1}} \leq C \frac{G(X_0,A)}{r}
	\end{equation}
	for any $X_0 \in \mathcal{D} \setminus B(x,2r)$.
\end{lemma}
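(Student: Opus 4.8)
The plan is to reprove this classical estimate of Caffarelli--Fabes--Mortola--Salsa by the standard route: two applications of the maximum principle comparing the elliptic measure with the Green function. All the ingredients are already on hand --- the comparison principle (Lemma~\ref{lm:comp}), Bourgain's non-degeneracy (Lemma~\ref{lm:Bourgain}), the interior Harnack inequality, the Harnack chain condition, and the elementary pointwise Green-function bounds --- and a complete argument in the present generality is in \cite{HMT2} (see also \cite{Ke} for the NTA setting). Write $\delta(\cdot):=\dist(\cdot,\partial\mathcal D)$, fix $x\in\partial\mathcal D$, $0<r<\diam(\partial\mathcal D)$, $\Delta:=\Delta(x,r)$ and $A:=A(x,r)$; after adjusting the corkscrew constant we may assume $B(A,c_1r)\subset B(x,r/2)\cap\mathcal D$ with $\dist(B(A,c_1r),\partial\mathcal D)\ge c_1r$ for some $c_1=c_1(M)>0$. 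Since $\mathcal A$ is symmetric in this section, the Green function is symmetric, so $g(\cdot):=G(\cdot,A)$ is a nonnegative $L$-solution in $\mathcal D\setminus\{A\}$ vanishing continuously on $\partial\mathcal D$; also set $u(\cdot):=\omega^{\cdot}(\Delta)$, a nonnegative $L$-solution in $\mathcal D$ with $0\le u\le 1$. I will use the standard bounds $G(Y,Z)\le C|Y-Z|^{2-n}$ for all $Y\neq Z$, and $G(Y,Z)\ge C^{-1}|Y-Z|^{2-n}$ whenever $\min\{\delta(Y),\delta(Z)\}\ge c_0|Y-Z|$ (the latter via the Green function of a ball together with Harnack chains); with these and the Harnack chain condition one gets $g\approx r^{2-n}$ on $\partial B(A,c_1r)$ and $g(Y)\gtrsim r^{2-n}$ whenever $\delta(Y)\gtrsim r$ and $|Y-x|\lesssim r$.

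For the lower bound I would work in $\mathcal O:=\mathcal D\setminus\overline{B(A,c_1r)}$, where $u$ and $g$ are both $L$-solutions and $X_0\in\mathcal O$. On $\partial B(A,c_1r)$, Lemma~\ref{lm:Bourgain} gives $u\ge c>0$ while $g\le Cr^{2-n}$, so $g\le C'r^{2-n}u$ there, and on $\partial\mathcal D$ we have $g=0\le C'r^{2-n}u$. The function $g-C'r^{2-n}u$ is $L$-harmonic and bounded in $\mathcal O$, nonpositive on $\partial\mathcal O$, and --- if $\mathcal D$ is unbounded --- has nonpositive limit at infinity (both $g$ and $\omega^{\cdot}(\Delta)$ decay there, $n\ge3$); the maximum principle then forces $g\le C'r^{2-n}u$ throughout $\mathcal O$, and evaluating at $X_0$ gives $\omega^{X_0}(\Delta)\gtrsim r^{n-2}G(X_0,A)$.

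For the upper bound I would instead work in $\mathcal O':=\mathcal D\setminus\overline{B(x,2r)}$; since $A\in B(x,r)$, $g$ is a genuine $L$-solution there, and $X_0\in\mathcal O'$. On $\partial\mathcal D\setminus B(x,2r)$ both $g$ and $u$ vanish continuously --- here it is essential that this piece of $\partial\mathcal D$ lies at distance $\ge r$ from $\overline{\Delta}$, so that $u=\omega^{\cdot}(\Delta)$ genuinely vanishes. On $\partial B(x,2r)\cap\mathcal D$ I claim $u\le Cr^{n-2}g$: at points $Y$ with $\delta(Y)\gtrsim r$ this is immediate from $u\le 1$ and $g(Y)\gtrsim r^{2-n}$; at points $Y$ with $\delta(Y)\ll r$, picking $z\in\partial\mathcal D$ nearest to $Y$ we have $\dist(z,\overline{\Delta})\gtrsim r$, so $u$ and $g$ are both $L$-solutions in $B(z,r/2)\cap\mathcal D$ vanishing on $\Delta(z,r/2)$, and Lemma~\ref{lm:comp} gives $u(Y)/g(Y)\approx u(A(z,r/8))/g(A(z,r/8))\lesssim r^{n-2}$ (once more using $u\le 1$, $g(A(z,r/8))\gtrsim r^{2-n}$, and Harnack chains). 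Thus $u-Cr^{n-2}g\le 0$ on $\partial\mathcal O'$, and as before the maximum principle yields $u\le Cr^{n-2}g$ in $\mathcal O'$, hence $\omega^{X_0}(\Delta)\le Cr^{n-2}G(X_0,A)$.

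The hard part will be the upper bound --- precisely, controlling $u/g$ on the part of $\partial B(x,2r)\cap\mathcal D$ that approaches $\partial\mathcal D$. The obstruction is that $\omega^{\cdot}(\Delta)$ does not vanish on $\Delta$ itself, so the maximum principle cannot be run on any region whose boundary meets $\overline{\Delta}$; carrying out the comparison with $B(x,2r)$ rather than $B(x,r)$ is exactly what creates the geometric room to invoke Lemma~\ref{lm:comp} near those boundary points. The remaining items --- checking that every constant depends only on the allowable constants, fixing the auxiliary radii $c_1r$, $r/2$, $r/8$, and the decay-at-infinity justification of the maximum principle in the unbounded case --- are routine, and I would not belabor them.
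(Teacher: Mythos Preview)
The paper does not actually prove this lemma: it is one of several background estimates in Section~2.3 for which the reader is simply referred to \cite{HMT2} (and to \cite{Ke} in the NTA case), so there is no proof in the paper to compare against. Your argument is the standard Caffarelli--Fabes--Mortola--Salsa route and is correct; the two maximum-principle comparisons --- in $\mathcal D\setminus\overline{B(A,c_1r)}$ for the lower bound, and in $\mathcal D\setminus\overline{B(x,2r)}$ for the upper bound with the dichotomy $\delta(Y)\gtrsim r$ versus $\delta(Y)\ll r$ on $\partial B(x,2r)\cap\mathcal D$ --- are exactly what one finds in those references, and the items you flag as routine (fixing the threshold and auxiliary radii so that Lemma~\ref{lm:comp} applies, decay at infinity for the maximum principle when $\mathcal D$ is unbounded) are indeed routine.
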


\medskip

\begin{lemma}[Doubling property of the elliptic measure]\label{lm:doubling}
	For every $x\in\partial\mathcal{D}$ and  $0<r<\diam(\partial\mathcal{D})$,  we have
	\begin{equation}\label{eq:doubling}
		\omega^X(\Delta(x,2r)) \leq C \omega^X(\Delta(x,r))
	\end{equation}
	for any $X\in\mathcal{D} \setminus B(x,4r)$.
\end{lemma}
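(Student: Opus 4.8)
The plan is to reduce the doubling inequality to the change of pole formula (Lemma~\ref{lm:cop}) and Bourgain's non-degeneracy estimate (Lemma~\ref{lm:Bourgain}), using the Harnack chain condition to relocate poles. First I would dispose of a trivial case: if $2r\ge\diam(\partial\mathcal{D})$, then $\Delta(x,2r)=\partial\mathcal{D}$, and since $\diam(\mathcal{D})=\diam(\partial\mathcal{D})$ for a bounded open connected set (while $\diam(\partial\mathcal{D})=\infty$ forces $\mathcal{D}$ to be unbounded), the set $\mathcal{D}\setminus B(x,4r)$ is empty and there is nothing to prove. So from now on I assume $2r<\diam(\partial\mathcal{D})$, which makes $\Delta(x,2r)$ a genuine surface ball and renders all the cited lemmas available at the scale $2r$.

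Next I would apply the change of pole formula at scale $2r$. Let $A_2=A(x,2r)$ be a corkscrew point relative to $\Delta(x,2r)$. The hypothesis $X\in\mathcal{D}\setminus B(x,4r)$ is precisely the requirement $X\in\mathcal{D}\setminus B(x,2\cdot 2r)$ in Lemma~\ref{lm:cop}, so applying that lemma with radius $2r$, pole $A_2$, and Borel sets $F=\Delta(x,2r)\supset F'=\Delta(x,r)$ gives
\[
\frac{\omega^X(\Delta(x,2r))}{\omega^X(\Delta(x,r))}\;\approx\;\frac{\omega^{A_2}(\Delta(x,2r))}{\omega^{A_2}(\Delta(x,r))}\;\le\;\frac{1}{\omega^{A_2}(\Delta(x,r))}\,,
\]
where I used $\omega^{A_2}(\Delta(x,2r))\le\omega^{A_2}(\partial\mathcal{D})=1$; the positivity of $\omega^{A_2}(\Delta(x,r))$ needed to invoke Lemma~\ref{lm:cop} will fall out of the next step. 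Everything has thus been reduced to the scale-invariant lower bound $\omega^{A_2}(\Delta(x,r))\gtrsim 1$.

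Establishing this lower bound is the one place that needs a little care, and what I would regard as the crux: the catch is that $A_2$ is a corkscrew point at scale $2r$ and need not lie in $B(x,r/2)$, where Lemma~\ref{lm:Bourgain} would directly yield $\omega^{Y}(\Delta(x,r))\gtrsim 1$. I would remedy this by moving the pole. Pick a corkscrew point $A'=A(x,r/2)$ relative to $\Delta(x,r/2)$; then $A'\in B(x,r/2)$ with $\delta(A')\ge r/(2M)$, while $A_2\in B(x,2r)$ with $\delta(A_2)\ge 2r/M$, so $|A_2-A'|<3r$ and $\min\{\delta(A_2),\delta(A')\}\ge r/(2M)$, whence the quantity $\Pi$ of Definition~\ref{def:HCC} satisfies $\Pi\le 6M$. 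The Harnack chain condition then furnishes a chain of balls $B_1,\dots,B_K\subset\mathcal{D}$, with $K$ bounded purely in terms of the allowable constants, joining $A_2$ to $A'$, each $B_k$ lying well inside $\mathcal{D}$ by \eqref{preHarnackball}. Since $Y\mapsto\omega^Y(\Delta(x,r))$ is a non-negative $L$-solution throughout $\mathcal{D}$ --- and, crucially, has no interior pole to avoid --- iterating the interior Harnack inequality along this chain gives $\omega^{A_2}(\Delta(x,r))\approx\omega^{A'}(\Delta(x,r))$ with an allowable implicit constant; and since $A'\in B(x,r/2)$, Lemma~\ref{lm:Bourgain} at radius $r$ gives $\omega^{A'}(\Delta(x,r))\ge C^{-1}$. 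Combining these yields $\omega^{A_2}(\Delta(x,r))\gtrsim 1$, which closes the argument. One could alternatively route the whole proof through the CFMS estimate of Lemma~\ref{lm:CFMS}, reducing to the Green function comparability $G(X,A(x,2r))\approx G(X,A(x,r))$; I prefer the argument above precisely because $\omega^Y(\Delta(x,r))$ carries no singularity, so one is spared from having to keep the Harnack chain away from $X$.
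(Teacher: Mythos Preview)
The paper does not supply its own proof of this lemma: it is one of the standard estimates listed in Section~2.3, with all proofs deferred to \cite{HMT2} (and to \cite{Ke} in the NTA setting). Your main-case argument is correct and clean: invoking Lemma~\ref{lm:cop} at scale $2r$ reduces the question to the lower bound $\omega^{A_2}(\Delta(x,r))\gtrsim 1$, and moving the pole from $A_2=A(x,2r)$ to $A(x,r/2)$ by a Harnack chain of length bounded purely in terms of $M,C_1$ together with Lemma~\ref{lm:Bourgain} finishes it. The CFMS route you mention at the end is the more traditional one (as in \cite{JK,Ke}); either works, though if one is worried about whether the proof of Lemma~\ref{lm:cop} in \cite{HMT2} might itself invoke doubling, the CFMS argument is the safer choice since both CFMS and doubling are typically derived independently from the boundary comparison principle.

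There is one small gap in your treatment of the edge case $2r\ge\diam(\partial\mathcal{D})$. You dismiss it as vacuous because $\mathcal{D}\setminus B(x,4r)=\emptyset$, relying on $\diam(\mathcal{D})=\diam(\partial\mathcal{D})$; but a uniform domain with Ahlfors regular boundary may be unbounded with bounded boundary (the complement of a closed ball, for instance), and then $\mathcal{D}\setminus B(x,4r)$ is non-empty. In that situation $\Delta(x,2r)=\partial\mathcal{D}$ has full measure and one must still check $\omega^X(\Delta(x,r))\gtrsim 1$ for $X$ far away. The fix is routine: since $r\ge\diam(\partial\mathcal{D})/2$, cover $\partial\mathcal{D}$ by boundedly many surface balls of radius $r/4$ (the main case applies at this scale), use CFMS and Harnack to see that all of them carry comparable $\omega^X$-mass, and conclude since their masses sum to at least $1$. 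So the case is not vacuous, but it is harmless.
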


\medskip

\begin{corollary}[Doubling property of the kernel]\label{cor:doublingPk}
		Let $Q\in \DD(\partial\mathcal{D})$ be a dyadic cube, and $\widetilde{Q}\in\dd$ be such that $C_1^{-1}\ell(Q)\le \ell(\widetilde{Q})\le C_1\ell(Q)$ and $\dist(Q,\widetilde{Q})\le C_1\ell(Q)$ for some $C_1\ge 1$. Suppose $\omega\in RH_p(\sigma)$ for some $p>1$, then for $X_Q$ the corkscrew relative to $Q$ we have 
		\begin{equation}
			\int_{\widetilde{Q}} \left( \textup{\textbf{k}}^{X_Q} \right)^p d\sigma \leq C \int_Q \left( \textup{\textbf{k}}^{X_Q} \right)^p d\sigma,
		\end{equation}
		with a constant $C$ depending on $C_1$ and the allowable constants and where $\textup{\textbf{k}}=d\omega/d\sigma$.
\end{corollary}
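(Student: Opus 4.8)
The plan is to deduce the estimate from the reverse Hölder inequality \eqref{eq1.wRH} after changing the pole from the canonical corkscrew points that appear there to $X_Q$. More precisely, I would show that for every dyadic cube $R\in\dd(\partial\mathcal{D})$ with $\ell(R)\approx\ell(Q)$ and $\dist(R,Q)\lesssim\ell(Q)$, with the implicit constants controlled by $C_1$, one has $\int_{R}(\mathbf{k}^{X_Q})^{p}\,d\sigma\approx\ell(Q)^{(n-1)(1-p)}$, the comparability constants depending only on $C_1$ and the allowable constants; the corollary then follows upon taking $R=Q$ and $R=\widetilde{Q}$ and keeping only the lower bound for $Q$ and the upper bound for $\widetilde{Q}$. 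Throughout I write $\delta(\cdot)=\dist(\cdot,\partial\mathcal{D})$, and note that $\sigma(R)\approx\ell(Q)^{n-1}$ by Ahlfors regularity together with Lemma~\ref{lm:ddAR}.

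The device I would use repeatedly is a bounded-length Harnack chain pole change: if $Y,Y'\in\mathcal{D}$ satisfy $\delta(Y)\approx\delta(Y')\approx\ell(Q)$ and $|Y-Y'|\lesssim\ell(Q)$, then the quantity $\Pi$ of \eqref{cond:Lambda} for the pair $(Y,Y')$ is $\lesssim 1$, so by Definition~\ref{def:HCC} there is a Harnack chain joining them whose number of balls is bounded in terms of $C_1$ and the allowable constants; running Harnack's inequality for the nonnegative solution $Z\mapsto\omega^{Z}(F)$ along this chain gives $\omega^{Y}(F)\approx\omega^{Y'}(F)$ for every Borel set $F\subset\partial\mathcal{D}$, and hence $\mathbf{k}^{Y}\approx\mathbf{k}^{Y'}$ $\sigma$-a.e.

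For the upper bound, let $x_R$ be the center of $R$. By Lemma~\ref{lm:ddAR} and the hypotheses linking $R$ to $Q$, there are constants $C_2,C_3$ depending only on $A_1$ and $C_1$ so that $R\subset\Delta':=\Delta(x_R,C_2\ell(Q))$ and $\Delta'$ is the trace on $\partial\mathcal{D}$ of a ball $B'\subset B(x_Q,C_3\ell(Q))$. Let $A=A(x_Q,C_3\ell(Q))$ be the corkscrew relative to $\Delta(x_Q,C_3\ell(Q))$. Since $\omega\in RH_p(\sigma)$, applying \eqref{eq1.wRH} with exponent $p$, surface ball $\Delta'$ and pole $A$ yields
\[
\int_{R}\big(\mathbf{k}^{A}\big)^{p}\,d\sigma\le\int_{\Delta'}\big(\mathbf{k}^{A}\big)^{p}\,d\sigma\lesssim\sigma(\Delta')\left(\frac{\omega^{A}(\Delta')}{\sigma(\Delta')}\right)^{p}=\sigma(\Delta')^{1-p}\,\omega^{A}(\Delta')^{p}\lesssim\ell(Q)^{(n-1)(1-p)},
\]
where we used $\omega^{A}(\Delta')\le 1$ and $\sigma(\Delta')\approx\ell(Q)^{n-1}$. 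Since $\delta(A)\approx\delta(X_Q)\approx\ell(Q)$ and $|A-X_Q|\lesssim\ell(Q)$, the pole-change device replaces $\mathbf{k}^{A}$ by $\mathbf{k}^{X_Q}$ up to constants, so $\int_{R}(\mathbf{k}^{X_Q})^{p}\,d\sigma\lesssim\ell(Q)^{(n-1)(1-p)}$. For the lower bound, Jensen's inequality gives $\int_{R}(\mathbf{k}^{X_Q})^{p}\,d\sigma\ge\sigma(R)^{1-p}\,\omega^{X_Q}(R)^{p}$; since $R\supset\Delta(x_R,a_0\ell(R))$, choosing a corkscrew $Y$ for $\Delta(x_R,a_0\ell(R)/2)$ and combining Lemma~\ref{lm:Bourgain} with the pole-change device gives $\omega^{X_Q}(R)\ge\omega^{X_Q}(\Delta(x_R,a_0\ell(R)))\gtrsim\omega^{Y}(\Delta(x_R,a_0\ell(R)))\gtrsim 1$, whence $\int_{R}(\mathbf{k}^{X_Q})^{p}\,d\sigma\gtrsim\ell(Q)^{(n-1)(1-p)}$. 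Applying the upper bound with $R=\widetilde{Q}$ and the lower bound with $R=Q$ finishes the proof (and applying both bounds to each of $Q$ and $\widetilde{Q}$ shows that both integrals equal $\approx\ell(Q)^{(n-1)(1-p)}$, so this is genuinely a doubling statement).

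I do not expect a genuine obstacle here; the only point requiring care is the geometric bookkeeping. In each application of \eqref{eq1.wRH} one must check that the enclosing surface ball $\Delta'$ and the pole $A$ are admissible in the exact sense of Definition~\ref{def:AinftyHMU}, namely that $\Delta'$ is the trace of a ball contained in some $B(q,r)$ which is paired with the corkscrew $A(q,r)$; and in each application of the pole-change device one must verify that the two interior points involved really do lie at distance $\approx\ell(Q)$ from $\partial\mathcal{D}$ and at mutual distance $\lesssim\ell(Q)$, so that the connecting Harnack chain has length bounded only in terms of $C_1$ and the allowable constants.
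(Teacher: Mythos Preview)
Your argument is correct and follows essentially the same route as the paper: apply the reverse H\"older inequality on the side of $\widetilde{Q}$, use Jensen/H\"older on the side of $Q$, and bridge the two via Harnack's inequality. The only packaging difference is that the paper compresses the middle step by invoking the doubling property of $\omega$ (Lemma~\ref{lm:doubling}) to compare $\omega^{X_Q}(\widetilde Q)/\sigma(\widetilde Q)$ directly with $\omega^{X_Q}(Q)/\sigma(Q)$, whereas you separately use the trivial bound $\omega^{A}(\Delta')\le 1$ for the upper estimate and Lemma~\ref{lm:Bourgain} for the lower estimate; both are equivalent here and yield the same conclusion.
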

The proof is a simple corollary of the doubling property of the elliptic measure:
		\begin{align*}
			\Big(\fint_{\widetilde Q} \left(\textbf{k}^{X_Q} \right)^p d\sigma\Big)^\frac1p
			& \lesssim 
			\frac{\omega^{X_Q}(\widetilde Q)}{\sigma(\widetilde Q)}
			\approx 
			\frac{\omega^{X_Q}( Q)}{\sigma( Q)}
			=
			\fint_{Q} \textbf{k}^{X_Q} d\sigma 
			\le
			\Big(\fint_{Q} \left(\textbf{k}^{X_Q} \right)^p d\sigma\Big)^\frac1p. 
		\end{align*}

\section{Proof by extrapolation}\label{S:proof-by-extrapolation}

In this section we present some powerful tools which will be key in the proof of our main result. After that we will describe how to apply those results in our context.

We start with \cite[Lemma 4.5]{HMM}, an extrapolation for Carleson measure result which in a nutshell describes how the relationship between a discrete Carleson measure
$m$ and another discrete measure $\widetilde m$ yields information about $\widetilde m$.

\begin{theorem}[{Extrapolation, \cite[Lemma 4.5]{HMM}}]\label{thm:extrapolation}
	Let $\sigma$ be a dyadically doubling Borel measure on $\pO$ (not necessarily equal to $\mathcal{H}^{n-1}|_{\pO}$), and let $m$ be a discrete Carleson measure 
with respect to $\sigma$ (defined as in \eqref{def:mQ} and Definition \ref{discrete-carleson}),
with constant $M_0$, that is
		\begin{equation}\label{m:extrapol}
		\|m\|_{\C}: = \sup_{Q\in \DD} \frac{ m(\DD_Q)}{ \sigma(Q) } \leq M_0.
	\end{equation}
	Let $\widetilde m$ be another discrete non-negative measure on $\DD$ defined as in \eqref{def:mQ}, by
	\begin{equation*}
		\widetilde m(\DD') := \sum_{Q\in \DD'} \beta_Q, \qquad \DD'\subset\DD. 
		\end{equation*}
Assume there is a constant $M_1$ such that
\begin{equation}\label{eq:betaQ}
\quad 0\le \beta_Q \le M_1 \sigma(Q)\  \hbox{    for any   }Q\in  \DD
\end{equation}
and that there is a positive constant $\gamma$ such that for every $Q\in \DD$ and every family of pairwise disjoint dyadic subcubes $\F=\{Q_j\}\subset \DD_Q$ verifying
	\begin{equation}\label{H:small}
		\|m_\F\|_{\C(Q)}:= \sup_{Q'\in \DD_Q} \frac{m \left( \DD_{\F,Q'}  \right) }{\sigma(Q')} \leq \gamma,
	\end{equation}
	we have that $\widetilde m$ satisfies
	\begin{equation}\label{H:bd}
		\widetilde m (\DD_{\F,Q}) \leq M_1 \sigma(Q).
	\end{equation}
	Then $\widetilde m$ is a discrete Carleson measure, with
	\begin{equation}
		\| \widetilde m\|_{\C}:= \sup_{Q\in \DD} \frac{\widetilde m(\DD_Q)}{\sigma(Q)} \leq M_2,
	\end{equation}
	for some $M_2 <\infty$ depending on $n, M_0, M_1, \gamma$ and the doubling constant of $\sigma$.
\end{theorem}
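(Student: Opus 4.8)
The plan is to run the standard Corona-type stopping time argument underlying extrapolation of Carleson measures, organizing the dyadic cubes into generations of stopping-time families to which the small-constant hypothesis \eqref{H:small}--\eqref{H:bd} applies, and summing the resulting estimates with the aid of the Carleson control \eqref{m:extrapol} on $m$. Fix $Q_0\in\DD$; we must bound $\widetilde m(\DD_{Q_0})/\sigma(Q_0)$ uniformly. First I would fix a small parameter $\gamma$ (the one supplied by the hypothesis) and perform a stopping-time decomposition inside $\DD_{Q_0}$: let $\F_1=\{Q_j^1\}$ be the maximal subcubes $Q'\subsetneq Q_0$ such that $m(\DD_{Q',Q_0})/\sigma(Q')>\gamma$ — equivalently the maximal cubes where the localized Carleson average of $m$ first exceeds $\gamma$ — so that by maximality the truncated family $\F_1$ satisfies $\|m_{\F_1}\|_{\C(Q_0)}\le C\gamma$ (the constant coming from the doubling of $\sigma$ when passing from a stopped cube to its dyadic parent). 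Then by hypothesis \eqref{H:bd}, $\widetilde m(\DD_{\F_1,Q_0})\le M_1\sigma(Q_0)$. Now iterate: inside each $Q_j^1\in\F_1$ repeat the construction to produce $\F_2$, and so on, obtaining generations $\G_0=\{Q_0\}$, $\G_1=\F_1$, $\G_2$, \dots, with $\DD_{Q_0}=\bigsqcup_{k\ge0}\bigsqcup_{R\in\G_k}\DD_{\F_{k+1},R}$ (the cubes that never get stopped being harmlessly absorbed, or handled by a limiting argument).

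Summing, $\widetilde m(\DD_{Q_0})=\sum_{k\ge0}\sum_{R\in\G_k}\widetilde m(\DD_{\F_{k+1},R})\le M_1\sum_{k\ge0}\sum_{R\in\G_k}\sigma(R)$, so everything reduces to the geometric-decay estimate $\sum_{R\in\G_k}\sigma(R)\le (1-c)^{k}\sigma(Q_0)$ for some $c=c(\gamma,M_0,\text{doubling})>0$. This is where the Carleson control on $m$ enters: each $R\in\G_{k+1}$ sits inside a unique parent $R'\in\G_k$, and $R$ was selected because $m(\DD_{R,R'})>\gamma\,\sigma(R)$; summing over the disjoint children of a fixed $R'$ gives $\gamma\sum_{R\in\G_{k+1},R\subset R'}\sigma(R)\le \sum_{R} m(\DD_{R,R'})\le m(\DD_{R'})\le M_0\,\sigma(R')$, i.e. $\sum_{R\subset R'}\sigma(R)\le (M_0/\gamma)\sigma(R')$. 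By itself this is not yet a contraction, so the usual fix is to choose $\gamma$ relative to $M_0$ — actually one argues the other way: one does \emph{not} get to choose $\gamma$, but one does get a contraction by a more careful accounting, namely that the selected subcubes of $R'$ of a \emph{single} generation carry only a fixed fraction of $\sigma(R')$ because the portion of $m(\DD_{R'})$ they consume is $\ge\gamma\sigma(R)$ each while in aggregate $\le M_0\sigma(R')$; combined with the observation that cubes on which $m$ is small are not stopped, one extracts $\sum_{R\subset R'}\sigma(R)\le(1-c_0)\sigma(R')$ with $c_0$ depending only on $M_0,\gamma$, doubling. Iterating the single-generation contraction yields the geometric decay, hence $\widetilde m(\DD_{Q_0})\le M_1\,c_0^{-1}\sigma(Q_0)$, and taking the supremum over $Q_0$ gives $M_2=M_1/c_0$, as claimed.

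The main obstacle — and the only genuinely delicate point — is the derivation of the single-generation contraction $\sum_{R\in\G_{k+1},R\subset R'}\sigma(R)\le(1-c_0)\sigma(R')$ with $c_0$ independent of $k$; a naive bound only gives $M_0/\gamma$, which need not be $<1$. The correct mechanism is to split the children of $R'$ according to whether the stopped cube already exhausts a large $m$-mass relative to its own size and to use that the cubes where $m$ stays below $\gamma$ all the way down are never stopped, so that the stopped subcubes genuinely miss a definite proportion of $R'$ in $\sigma$-measure; quantitatively one shows that if $\sum_{R\subset R'}\sigma(R)>(1-c_0)\sigma(R')$ with $c_0$ too small, then one of the stopped $R$ would have had a dyadic ancestor strictly between $R$ and $R'$ already violating the threshold, contradicting maximality, once $c_0$ is chosen small depending on the doubling constant of $\sigma$ and on $M_0/\gamma$. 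I would also take care with the degenerate cases: cubes in $\DD_{Q_0}$ that are never stopped contribute to $\widetilde m$ an amount bounded via \eqref{eq:betaQ} and a limiting argument (exhausting $\DD_{Q_0}$ by finite subfamilies, or invoking \eqref{H:bd} with $\F=\emptyset$ on the never-stopped part directly, since $\|m_{\emptyset}\|_{\C(R')}=\|m\|_{\C(R')}$ need not be $\le\gamma$, so one really does need the stopping time to reach the small regime everywhere). Finally, the bound \eqref{eq:betaQ} guarantees $\widetilde m$ is well-defined and finite on each $\DD_{Q_0}$, which legitimizes the rearrangement of the (a priori possibly infinite) sum.
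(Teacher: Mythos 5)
There is a genuine gap, and it sits exactly at the point you yourself flag as the crux: the single--generation contraction $\sum_{R\in\G_{k+1},\,R\subset R'}\sigma(R)\le(1-c_0)\sigma(R')$ is false, and no ``more careful accounting'' can rescue it. Here is a counterexample. Fix a large integer $N$, set $\alpha_{Q'}=2\gamma\,\sigma(Q')$ for every $Q'\in\DD_{Q_0}$ exactly $N$ generations below $Q_0$, and $\alpha_{Q'}=0$ otherwise. Then $m(\DD_{Q'})=2\gamma\,\sigma(Q')$ for every $Q'$ of generation $\le N$ and $m(\DD_{Q'})=0$ below, so $\|m\|_{\C}=2\gamma=:M_0$ and \eqref{m:extrapol} holds; but your stopping time selects, at every iteration, \emph{all} the children of the previous stopping cubes, down to generation $N$. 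Each generation $\G_k$ ($1\le k\le N$) tiles $Q_0$, so $\sum_{k}\sum_{R\in\G_k}\sigma(R)=(N+1)\,\sigma(Q_0)$ with $N$ arbitrary, and your final bound degenerates. Your proposed mechanism for the contraction (a stopped $R$ would have an ancestor strictly between $R$ and $R'$ already violating the threshold) is vacuous here, since the stopped cubes are the immediate children of $R'$. There is also a secondary error: maximality of $\F_1$ does \emph{not} give $\|m_{\F_1}\|_{\C(Q_0)}\le C\gamma$, because the supremum in \eqref{H:small} includes $Q'=Q_0$ itself and nothing in the stopping time controls $\alpha_{Q_0}$ (take $\alpha_{Q_0}=M_0\sigma(Q_0)$ and all other $\alpha_{Q'}=0$: then $\F_1=\emptyset$ and $m(\DD_{\F_1,Q_0})=M_0\sigma(Q_0)$). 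That second defect is patchable by treating the top cube separately via \eqref{eq:betaQ} and applying the hypothesis to the children of $Q_0$; the packing failure is not patchable within your scheme.

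The reason the cited proof (\cite[Lemma 4.5]{HMM}, going back to \cite{HM} and \cite{LM}) works is that it never asks for decay of $\sum_{R\in\G_k}\sigma(R)$ in $k$. Instead of iterating one stopping time with the single threshold $\gamma$, it runs an induction on the threshold itself: one shows that the statement ``for every $Q$ and every disjoint $\F\subset\DD_Q$ with $\|m_\F\|_{\C(Q)}\le a$ one has $\widetilde m(\DD_{\F,Q})\le N_a\,\sigma(Q)$'' self--improves from level $a$ to level $a+\gamma/2$, the stopping time at each rung being designed so that the sawtooth between consecutive stopping families has Carleson norm at most $\gamma$ (so the hypothesis \eqref{H:small}$\Rightarrow$\eqref{H:bd} applies to that layer) while below the new stopping cubes the norm has dropped to $a$ (so the induction hypothesis applies there). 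Within each rung the stopping cubes are merely disjoint, contributing $\sum\sigma(R)\le\sigma(Q)$, and the finiteness comes from the fact that there are only $O(M_0/\gamma)$ rungs between $\gamma$ and $M_0$; taking $\F=\emptyset$ at the top level then yields the conclusion. In short, the ``generations'' must be indexed by the Carleson budget consumed, not by iteration depth of a fixed stopping rule; with your indexing the number of generations is unbounded and their total $\sigma$--mass diverges.
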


\begin{theorem}[\cite{HMM}, \cite{GMT}]\label{thm:bhc}
	Let $\mathcal{D}$ be an open set satisfying an interior corkscrew condition  with Ahlfors regular boundary. Then the following are equivalent:
\begin{enumerate}[label=\textup{(\alph*)}, itemsep=0.2cm] 	
		\item\label{1-thm:bhc} $\partial\mathcal{D}$ is uniformly rectifiable.
			
		\item\label{2-thm:bhc} There exists a constant $C$ such that for every bounded harmonic function  $u$ in $\mathcal{D}$, i.e. $-\Delta u = 0$ in $\mathcal{D}$, and for any $x\in \partial\mathcal{D}$ and $0<r\lesssim \diam(\mathcal{D})$, there hold
			\begin{equation}\label{eq:bhc}
				\frac{1}{r^{n-1}} \iint_{B(x,r) \cap \mathcal{D}} |\nabla u(Y)|^2 \dist(Y, \partial\mathcal{D})\, dY \leq C \|u\|_\infty^2.
			\end{equation}
	\end{enumerate}			
\end{theorem}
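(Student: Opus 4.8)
The plan is to prove the equivalence of uniform rectifiability of $\partial\mathcal{D}$ with the square-function Carleson estimate \eqref{eq:bhc} for bounded harmonic functions. Since this is stated as coming from \cite{HMM} and \cite{GMT}, the proof is really a citation-and-assembly argument, but let me describe how I would organize it. The implication \ref{1-thm:bhc} $\Rightarrow$ \ref{2-thm:bhc} is the direction established in \cite{HMM}: under the hypotheses that $\mathcal{D}$ satisfies an interior corkscrew condition with Ahlfors regular boundary, uniform rectifiability of $\partial\mathcal{D}$ implies that solutions of $-\Delta u = 0$ with $\|u\|_\infty \le 1$ satisfy the Carleson measure bound on their gradient. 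I would simply invoke this: the key point is that $\mathcal{D}$ here need \emph{not} be a uniform domain — no Harnack chain condition is assumed — so one cannot use potential-theoretic tools freely; instead the argument in \cite{HMM} proceeds via a corona/stopping-time decomposition adapted to the uniformly rectifiable structure of $\partial\mathcal{D}$, comparing $\mathcal{D}$ to chord-arc subdomains on which good estimates hold, and summing up the Carleson boxes. I would cite \cite{HMM} for this and move on.

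For the converse \ref{2-thm:bhc} $\Rightarrow$ \ref{1-thm:bhc}, I would invoke \cite{GMT}: the point is that a single scale-invariant square-function estimate for \emph{all} bounded harmonic functions (equivalently, the $BMO$-solvability-type bound) forces uniform rectifiability of an Ahlfors regular boundary, again only under the interior corkscrew condition. The mechanism in \cite{GMT} is to use the square function bound to run a David–Semmes style argument — one produces enough control on the oscillation of harmonic functions, or equivalently on harmonic measure / Green's function, to verify a characterization of uniform rectifiability such as the existence of big pieces of good sets, or the weak-$A_\infty$ property of harmonic measure combined with the results of \cite{HLMN} or \cite{HMMTZ-type} converses. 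Concretely one tests \eqref{eq:bhc} against harmonic functions that are well-chosen (e.g. coordinate-like functions, or solutions with carefully prescribed boundary data adapted to a cube $Q$), extracts a Carleson-measure packing condition on the bad cubes where $\partial\mathcal{D}$ fails to be flat, and concludes via the known characterizations of $UR$.

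The main obstacle — and the reason this is a genuine theorem rather than a triviality — is precisely the weakness of the hypothesis: only an interior corkscrew condition and Ahlfors regularity are assumed, with \emph{no} connectivity (no Harnack chains, no exterior corkscrew), so neither comparison principles nor the usual CFMS-type estimates are available globally. Both directions must therefore be carried out by localizing to chord-arc subdomains via the sawtooth/corona machinery (of the type recalled in Section~\ref{section:sawtooth}, following \cite{HM}), proving the estimates there, and then re-assembling via a Carleson-packing argument. I expect the harder half in practice to be \ref{2-thm:bhc} $\Rightarrow$ \ref{1-thm:bhc}, since deducing rigidity/geometric information from a single analytic estimate is always the delicate direction; but for the purposes of this paper both halves are quoted wholesale from \cite{HMM} and \cite{GMT}, so the proof here consists of stating that Theorem~\ref{thm:bhc} is the combination of \cite[the relevant result]{HMM} (for \ref{1-thm:bhc}$\Rightarrow$\ref{2-thm:bhc}) and \cite[the relevant result]{GMT} (for \ref{2-thm:bhc}$\Rightarrow$\ref{1-thm:bhc}), after checking that the hypotheses of those references — open set with interior corkscrew condition and Ahlfors regular boundary — coincide with the ones assumed here.
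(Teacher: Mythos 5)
Your proposal coincides with what the paper actually does: Theorem \ref{thm:bhc} is not proved in this paper but simply quoted, with \ref{1-thm:bhc} $\Rightarrow$ \ref{2-thm:bhc} taken from \cite[Theorem 1.1]{HMM} and the converse \ref{2-thm:bhc} $\Rightarrow$ \ref{1-thm:bhc} from \cite[Theorem 1.1]{GMT}, exactly as you propose. Your added commentary on the internal mechanisms of those references (corona/stopping-time arguments in the absence of Harnack chains) is plausible background but plays no role in the paper's one-line attribution.
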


\begin{remark}
	Condition \eqref{eq:bhc} is sometimes referred to as the \textit{Carleson measure estimate} (CME)
	for bounded, harmonic functions.	
\end{remark}

The direction \ref{1-thm:bhc} $\implies$ \ref{2-thm:bhc} is proved by the first three authors of the present paper \cite[Theorem 1.1]{HMM}, and the converse direction is proved by Garnett, Mourgoglou, and Tolsa \cite[Theorem 1.1]{GMT}.  As we have noted above, see Theorem \ref{thm:hmu}, under the uniform domain assumption, the statements \ref{1-thm:bhc} and/or \ref{2-thm:bhc} are equivalent to the fact that $\mathcal{D}$ is a chord-arc domain.

\begin{theorem}[{\cite[Main Theorem]{HMMTZ}}]\label{thm:sca} 
	Given the values of allowable constants $M$, $C_1$, $C_{AR}>1$, $\Lambda \geq \lambda >0$, $C_0>1$, and $0<\theta<1$, there exists $\epsilon>0$ depending on the dimension $n$ and the allowable constants, such that the following holds.
	Let $\mathcal{D} \subset \RR^n$ be a \textbf{bounded} uniform domain with constants $M, C_1$ and whose boundary is Ahlfors regular with constant $C_{AR}$. Let $\wcalA\in \Lip_{\rm loc}
(\mathcal{D})$ be a \textbf{symmetric} elliptic matrix satisfying \eqref{def:UE} with ellipticity constants $\lambda, \Lambda$, such that
\begin{enumerate}[label=\textup{(\alph*)}, itemsep=0.2cm] 	
		\item\label{1-thm:sca}  $|\nabla \wcalA|^2 \dist(\cdot, \partial\mathcal{D})$ satisfies the Carleson measure assumption with norm bounded by $\epsilon$, that is,
		\begin{equation}\label{carleson-tt-alt}
		\sup_{\substack{x\in\pO\\0< r<\diam(\Omega)} } \frac{1}{r^{n-1}} \iint_{B(x,r) \cap\Omega} 
		|\nabla \mathcal{A}(Y)|^2 \dist(Y, \partial\mathcal{D})\,dY <\epsilon\,.
		\end{equation}

		\item\label{2-thm:sca} The elliptic measure $\omega_L$ associated with the operator $L=-\divg(\wcalA\nabla)$ is of class $A_\infty$, with constants $C_0$ and $\theta$.
	\end{enumerate} 
	Then $\mathcal{D}$ is a chord-arc domain.
\end{theorem}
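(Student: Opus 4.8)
The plan is a blow-up/compactness argument reducing the variable-coefficient statement to the harmonic-measure result of Theorem~\ref{thm:hmu}. The mechanism that makes the \emph{small} Carleson constant usable is that the left-hand side of \eqref{carleson-tt-alt} is scale invariant: zooming in on a ``bad'' ball and rescaling leaves the Carleson norm bounded by the same small number, so in the limit the gradient of the coefficients must vanish and the blow-up operator becomes a linear image of the Laplacian. Concretely, suppose Theorem~\ref{thm:sca} fails for some fixed choice of allowable constants. Then for every $j\in\NN$ there are a bounded uniform domain $\mathcal{D}_j$ with $C_{AR}$-Ahlfors regular boundary and a symmetric uniformly elliptic matrix $\mathcal{A}_j$ (ellipticity constants $\lambda,\Lambda$) satisfying \ref{H1}, with the Carleson norm of $|\nabla\mathcal{A}_j|^2\dist(\cdot,\partial\mathcal{D}_j)$ at most $1/j$, for which $\omega_{L_j}\in A_\infty(\sigma_j)$ with the fixed constants $C_0,\theta$ (equivalently, the reverse H\"older bound \eqref{eq1.wRH} holds with fixed constants) but $\mathcal{D}_j$ is not chord-arc. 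By Definition~\ref{def:nta}, since $\mathcal{D}_j$ is already uniform with Ahlfors regular boundary, this forces $\mathcal{D}_j$ to fail the exterior corkscrew condition, so there exist $q_j\in\partial\mathcal{D}_j$ and $0<r_j<\diam(\mathcal{D}_j)$ such that $B(q_j,r_j)\cap(\mathcal{D}_j)_{\mathrm{ext}}$ contains no ball of radius $r_j/j$.

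Rescale by setting $\widetilde{\mathcal{D}}_j:=r_j^{-1}(\mathcal{D}_j-q_j)$ and $\widetilde{\mathcal{A}}_j(Y):=\mathcal{A}_j(q_j+r_jY)$. Every hypothesis is preserved with the same constants: $0\in\partial\widetilde{\mathcal{D}}_j$, $\diam(\widetilde{\mathcal{D}}_j)>1$, $\widetilde{\mathcal{D}}_j$ is uniform and Ahlfors regular, $\widetilde{\mathcal{A}}_j$ is symmetric elliptic satisfying \ref{H1}, $\omega_{\widetilde{L}_j}\in A_\infty$ with constants $C_0,\theta$, and $B(0,1)\cap(\widetilde{\mathcal{D}}_j)_{\mathrm{ext}}$ contains no ball of radius $1/j$ --- while, crucially, the Carleson norm of $|\nabla\widetilde{\mathcal{A}}_j|^2\widetilde\delta_j$ is still at most $1/j\to 0$. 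Passing to a subsequence, $\widetilde{\mathcal{D}}_j\to\mathcal{D}_\infty$ in the local Hausdorff sense for both the boundaries and the closed complements, where $\mathcal{D}_\infty$ is again a uniform domain with Ahlfors regular boundary, $0\in\partial\mathcal{D}_\infty$, $\diam(\mathcal{D}_\infty)\ge 1$; the stability of these quantitative connectivity and dimensionality conditions under such limits is by now standard (see, e.g., \cite{HMT1,7au}). On any compact $K\subset\subset\mathcal{D}_\infty$ one has $\widetilde\delta_j\gtrsim 1$ eventually, so $\iint_K|\nabla\widetilde{\mathcal{A}}_j|^2\,dY\to 0$; combined with $\|\widetilde{\mathcal{A}}_j\|_{L^\infty}\le\Lambda$ and Poincar\'e's inequality, a further subsequence gives $\widetilde{\mathcal{A}}_j\to\mathcal{A}_\infty$ in $L^2_{\mathrm{loc}}(\mathcal{D}_\infty)$ with $\mathcal{A}_\infty$ a \emph{constant} matrix (the same on overlapping compacts since $\mathcal{D}_\infty$ is connected).

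Now choose interior corkscrew points $\widetilde A_j=A(0,1)$ for $\widetilde{\mathcal{D}}_j$, which lie at a definite distance from $\partial\widetilde{\mathcal{D}}_j$, so along a subsequence $\widetilde A_j\to X_\infty\in\mathcal{D}_\infty$. The standard compactness machinery for divergence-form operators with uniformly elliptic, $L^2_{\mathrm{loc}}$-convergent coefficients on locally Hausdorff-convergent uniform domains (uniform De Giorgi--Nash--Moser estimates, boundary H\"older continuity, Bourgain-type nondegeneracy as in Lemma~\ref{lm:Bourgain}, and convergence of Green functions and of the Ahlfors regular surface measures) yields, at the relevant scales, weak-$*$ convergence $\omega_{\widetilde{L}_j}^{\widetilde A_j}\rightharpoonup\omega_{L_\infty}^{X_\infty}$ with $L_\infty=-\divg(\mathcal{A}_\infty\nabla)$, and the reverse H\"older inequality \eqref{eq1.wRH} passes to the limit, so $\omega_{L_\infty}\in A_\infty(\sigma_\infty)$ with constants $C_0,\theta$. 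A linear change of variables diagonalizing $\mathcal{A}_\infty$ reduces matters to $L_\infty=-\Delta$ on a uniform, Ahlfors regular domain (bounded or unbounded) whose harmonic measure is in $A_\infty(\sigma_\infty)$. But $\mathcal{D}_\infty$ cannot satisfy the exterior corkscrew condition: if there were a ball $B(A,1/M_\infty)\subset B(0,1)\cap(\mathcal{D}_\infty)_{\mathrm{ext}}$, then since $\partial\widetilde{\mathcal{D}}_j\to\partial\mathcal{D}_\infty$ the concentric ball $B(A,1/(2M_\infty))$ would lie in $B(0,1)\cap(\widetilde{\mathcal{D}}_j)_{\mathrm{ext}}$ for all large $j$, contradicting the absence of radius-$(1/j)$ exterior corkscrew balls once $1/j<1/(2M_\infty)$. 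Hence $\mathcal{D}_\infty$ is uniform and Ahlfors regular with $A_\infty$ harmonic measure but is not chord-arc, contradicting the implication \ref{1-thm:hmu}$\Rightarrow$\ref{3-thm:hmu} of Theorem~\ref{thm:hmu}; this contradiction proves the theorem.

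The heart of the argument --- and the step I expect to be the main obstacle --- is the third paragraph: setting up the notion of convergence of the pairs $(\widetilde{\mathcal{D}}_j,\widetilde{\mathcal{A}}_j)$ so that \emph{all} the quantitative hypotheses, namely uniformity, Ahlfors regularity, and especially the scale-invariant $A_\infty$/reverse-H\"older bound for elliptic measure, survive the blow-up, and proving that the elliptic measures of the rescaled operators genuinely converge to the harmonic measure of $\mathcal{D}_\infty$ (this is where the variable-coefficient nature and the lack of a maximum-principle comparison with $-\Delta$ make things delicate, and where the convergence $\widetilde{\mathcal A}_j\to\mathcal A_\infty$ constant is used). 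By contrast, the reduction to a failure of the exterior corkscrew condition, the scale invariance that sends the Carleson norm to zero, and the concluding geometric contradiction with Theorem~\ref{thm:hmu} are comparatively soft.
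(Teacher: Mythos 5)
This statement is not proved in the present paper at all: it is quoted from the companion work \cite{HMMTZ} (Part I, the small constant case), whose method the introduction here describes as precisely the combination of free-boundary blow-up/compactness arguments with scale-invariant harmonic analysis that you outline --- including the key point that the scale-invariant smallness of the Carleson norm is what forces the blow-up coefficients to become constant, which is also why this compactness route cannot reach the large-constant case treated in the present paper. Your sketch therefore follows essentially the same route as the cited proof, and the steps you correctly flag as the hard ones (compatibility of the local Hausdorff limits of $\partial\mathcal{D}_j$, $\overline{\mathcal{D}_j}$ and their complements; weak-$\ast$ convergence of the elliptic and surface measures to those of the limit operator and domain; and persistence of the reverse H\"older/$A_\infty$ bound in the limit) are exactly the technical content of that companion paper.
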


The proof of Theorem \ref{thm:main} is rather involved thus we sketch below the plan of the proof. 

\begin{proof}[Proof of Theorem \ref{thm:main}]
We first reduce matters to the case on which $A$ is symmetric. To do so we observe that by \cite[Theorem 1.6]{CHMT}, under the assumptions \ref{H1} and \ref{H2}, if $\omega_L\in A_\infty(\sigma)$ then $\omega_{L^{\rm sym}}\in A_\infty(\sigma)$ where $L^{\rm sym}=-\divg(\wcalA^{\rm sym}\nabla)$ and $\wcalA^{\rm sym}=(\frac{a_{ij}+a_{ji}}2)_{i,j=1}^n$ is the symmetric part of $A$. Note that, clearly, $\wcalA^{\rm sym}$ is a symmetric uniformly elliptic matrix in $\Omega$ with the same ellipticity constants as $\wcalA$. It also satisfies  \ref{H1} and \ref{H2} with constants which are controlled by those of $\wcalA$. Hence we only need to show 
\ref{1-thm-main} $\implies$ \ref{3-thm-main} for $L^{\rm sym}$ which is associated to the symmetric matrix $\wcalA^{\rm sym}$.  That is, we may assume to begin with, and we do so, that $\wcalA$ is symmetric.

Our main goal is to use the above extrapolation theorem with $m$ and $\widetilde{m}$ two discrete measures associated respectively with the sequences $\alpha=\{\alpha_Q\}_{Q\in\dd}$ and $\beta=\{\beta_Q\}_{Q\in\dd}$ defined by
\begin{equation}\label{def-alpha-beta}
\alpha_Q: = \iint_{U_Q^\ast} |\nabla \wcalA(Y)|^2 \delta(Y) dY,\qquad 
\beta_Q ; = \iint_{U_Q} |\nabla u(Y)|^2 \delta(Y) dY,
\end{equation}
where $u$ is an arbitrary bounded, harmonic function in $\Omega$, such that $\|u\|_{L^\infty(\Omega)} \leq 1$; and $U_Q$ and $U_Q^\ast$ are as defined in \eqref{eq2.whitney3} and \eqref{eq:fatten-objects} respectively.
We would like to observe that by the interior Caccioppoli inequality, $\beta_Q$ clearly satisfies the assumption \eqref{eq:betaQ}:
\begin{multline*}
\beta_Q 
= 
\iint_{U_Q} |\nabla u(Y)|^2 \delta(Y)\, dY
\lesssim
\sum_{I\in \W_Q} \ell(I)\,\iint_{I^*} |\nabla u(Y)|^2\,dY
\lesssim
\sum_{I\in \W_Q} \ell(I)^{-1}\,\iint_{I^{**}} |u(Y)|^2\,dY
\\
\lesssim
\ell(Q)^{-1}\,\iint_{U_Q^*} |u(Y)|^2 \delta(Y) dY
\le
\ell(Q)^n
\approx \sigma(Q),
\end{multline*}
where we have used that \eqref{def:WQ}, the bounded overlap of the family $\{I^{**}\}_{I\in\W}$, and \eqref{tent-Q}. We will take any family of pairwise disjoint dyadic subcubes $\F=\{Q_j\}\subset \DD_Q$ so that \eqref{H:small} holds for sufficiently small $\gamma\in (0,1)$ to be chosen and the goal is to obtain \eqref{H:bd}. To achieve this  we will carry out the following steps:

\begin{enumerate}[label=\textup{Step \arabic*}:, ref=\textup{Step \arabic*}, itemsep=0.2cm]

\item\label{1-proof-main} We first observe that \eqref{m:extrapol} is equivalent to the Carleson measure assumption \ref{H2}. This is a simple calculation which uses the fact that the Whitney boxes  $I^{\ast\ast}$ which form $U_Q^\ast$ have finite overlap and the definition of $T_Q$ in \eqref{carleson-box}, details are left to the reader.

\item\label{2-proof-main} Given $\epsilon>0$ we verify that the small Carleson hypothesis \eqref{H:small} implies that if $\gamma=\gamma(\epsilon)$ is small enough $\wcalA$ satisfies the small Carleson assumption in the sawtooth domain $\Omega_{\F,Q}^\ast$, that is, \eqref{carleson-tt-alt} holds with $\mathcal{D}=\Omega_{\F,Q}^\ast$ and the given $\epsilon$. This is done in Section \ref{s:small-extrapolation}.

\item\label{3-proof-main} We verify that under the hypotheses \ref{H1} and \ref{H2}, the assumption $\omega \in A_\infty(\sigma)$ in $\Omega$ is transferable to any sawtooth domain, in particular, if we write $\omega_{*}$ for the elliptic measure associated with $L$ in $\Omega_{\F,Q}^\ast$ then  $\omega_{*}\in A_\infty(\HH^{n-1}|_{\partial{\Omega_{\F,Q}^\ast}})$ and the implicit constants are uniformly controlled by the allowable constants. See Theorem \ref{thm:trsf} and Corollary \ref{corol:KP-transfer}.

\item\label{4-proof-main}  We combine \ref{2-proof-main} and \ref{3-proof-main}  with Theorem \ref{thm:sca} applied to $\mathcal{D}=\Omega_{\F,Q}^\ast$ and obtain that $\Omega_{\F,Q}^\ast$ is a chord-arc domain. More precisely, note first that $\Omega_{\F,Q}^\ast$ is a \textbf{bounded} uniform domain with Ahlfors regular boundary (see Lemma \ref{lemma:sawtooth-inherit}) and all the implicit constants are uniformly controlled by those of $\Omega$, that is, they do not depend on $Q$ or the family $\F$. Also, \ref{3-proof-main} says that $\omega_{*}\in A_\infty(\HH^{n-1}|_{\partial{\Omega_{\F,Q}^\ast}})$ and the implicit constants are uniformly controlled by the allowable constants. Hence for the parameter $\epsilon$ given by Theorem \ref{thm:sca} (recall that we have assumed that $\wcalA$ is \textbf{symmetric}), which only depends on the allowable constants and is independent of $Q$ or the family $\F$, we can find the corresponding $\gamma=\gamma(\epsilon)$ from \ref{2-proof-main} so that \eqref{carleson-tt-alt} holds with $\mathcal{D}=\Omega_{\F,Q}^\ast$ and that value of $\epsilon$. Thus Theorem \ref{thm:sca} applied to $\mathcal{D}=\Omega_{\F,Q}^\ast$ yields that $\Omega_{\F,Q}^\ast$ is a chord-arc domain with constants that only depend on the allowable constants.

\item\label{5-proof-main} We next apply Theorem \ref{thm:bhc} with $\mathcal{D}=\Omega_{\F,Q}^\ast$ to obtain that \eqref{eq:bhc} holds with $\mathcal{D}=\Omega_{\F,Q}^\ast$. Seeing that the latter implies 	\eqref{H:bd} is not difficult. Indeed, note that any $Y \in \Omega_{\F,Q}$ satisfies $\delta_*(Y):= \dist(Y, \partial\Omega_{\F,Q}^\ast ) \approx_\tau \delta(Y)$ (here we would like to remind the reader that $\Omega_{\F,Q}$ is comprised of fattened Whiney boxes $I^*=(1+\tau)I$ while for $\Omega_{\F,Q}^*$ we use the fatter versions $I^{**}=(1+2\tau)I$).  Thus by \eqref{eq:bhc}, the fact that $u$ is harmonic and bounded by $1$ in $\Omega$, and so in $\Omega_{\F,Q}^*$, and a simple covering argument, we can conclude that
\begin{multline*}
\null\hskip1.2cm	\widetilde m_{}( \DD_{\F,Q}) \lesssim \iint_{\Omega_{\F,Q}} |\nabla u|^2 \delta(Y) dY  \approx \iint_{\Omega_{\F,Q}} |\nabla u|^2 \delta_* (Y) dY \\
\lesssim	 \iint_{\Omega_{\F,Q}^\ast}  |\nabla u|^2 \delta_* (Y) dY  \lesssim \diam(\Omega_{\F,Q}^\ast)^{n-1} \approx \ell(Q)^{n-1} \approx \sigma(Q),
\end{multline*}
which is \eqref{H:bd}.
\end{enumerate} 

\medskip

After all these steps have been carried out the extrapolation for Carleson measures in Theorem \ref{thm:extrapolation} allows us to conclude that $\widetilde m$ is a discrete Carleson measure. In other words, we have proved that any bounded harmonic function in $\Omega$ satisfies \eqref{eq:bhc} with $\mathcal{D}=\Omega$. As a result, and by another use of Theorem \ref{thm:bhc} this time with $\mathcal{D}=\Omega$, we derive that $\partial\Omega$ is uniformly rectifiable. This completes the proof of Theorem \ref{thm:main} modulo establishing \ref{2-proof-main} and \ref{3-proof-main} and this will be done in the following sections.
\end{proof}

\begin{remark}
For convenience, we augment $\F$ by adding all subcubes of $Q$ of length $2^{-N}\ell(Q)$, 
and  let $\F_N$ denote the maximal cubes
in the resulting augmented collection. Note that for each $N\ge 2$, the sawtooth domain $\Omega_{\F_N,Q}$ is compactly contained in $\Omega$ (indeed is $2^{-N}\ell(Q)$-away from $\pom$). Note that $\DD_{\F_{N},Q} \subset \DD_{\F_{N'},Q} \subset \DD_{\F,Q} $ for every $2\le N\le N'$. In particular, $m_{\F_{N}} \leq m_{\F_{N'}} \leq m_{\F}$ and thus
\[ m_{\F} \text{ satisfies  \eqref{H:small}} \implies m_{\F_N} \text{ also satisfies the \eqref{H:small} with a constant independent of }N. \] 
We are going to prove \ref{2-proof-main} and \ref{3-proof-main} for the sawtooth domain $\Omega_{\F_N,Q}$, with constants independent of $N$. Then by  
\ref{4-proof-main} and \ref{5-proof-main}, we will have 
\begin{equation}
	\widetilde m(\DD_{\F_N,Q}) \leq M_1 \sigma(Q),
\end{equation}
with a constant $M_1$ independent of $N$, and thus \eqref{H:bd} follows from monotone convergence theorem by letting $N\to \infty$. To simplify the notations we drop from  the index $N$ from now on and write $\F=\F_N$ but we keep in mind that the corresponding sawtooth domain $\Omega_{\F,Q}$ is compactly contained in $\Omega$.
\end{remark}

\section{Consequences of the small Carleson hypothesis in the extrapolation theorem.}\label{s:small-extrapolation}

Set $\Omega_*:= \Omega_{\F,Q}^\ast$ and let $\epsilon$ be given. The goal is to see that we can find $\gamma=\gamma(\epsilon)\in (0,1)$ so that  \eqref{H:small} 
implies
\begin{equation}\label{sC:alt}
\iint_{B(x,r) \cap \Omega_*} |\nabla \wcalA(Y)|^2 \delta_*(Y) dY \leq \epsilon r^{n-1},
\end{equation}
for any $x\in\pom_*$ and any $0<r<\diam(\pom_*)$. To see this we fix $x\in\pom_*$ and $0<r<\diam(\pom_*)\approx\ell(Q)$. Using that $\Omega_* \subset \Omega$ one has that  $\delta_*(Y) \leq \delta(Y)$ and therefore \eqref{sC:alt} follows at once from 
\begin{equation}\label{sC}
\iint_{B(x,r) \cap \Omega_*} |\nabla \wcalA(Y)|^2 \delta(Y) dY \leq \epsilon r^{n-1}.
\end{equation}

To show \eqref{sC}, we let $c\in (0,1)$ be a small constant and $\widetilde{M}\ge 1$ be a large constant to be determined later, depending on the values of $\eta, K$ used in the definition of $\WW_Q^0 $ in \eqref{def:WQ0}. We consider two cases depending on the size of $r$ with respect to $\delta(x)$ for $x\in\pom^\ast$. Recall that $\Omega_*$ is compactly contained in $\Omega$, thus $\delta(x)>0$ for any $x\in \pO_*$.

\noindent
\textbf{Case 1}. $r\leq c\delta(x)$. Since $x\in\pom_*= \partial\Omega_{\F,Q}^\ast$ there exist $Q_x\in \DD_{\F,Q}$ and $I_x\in \W_{Q_x}$ such that $x\in \partial I_x^{**}$.  We choose and fix $c$ sufficiently small (depending just on dimension), so that $B(x,r)$ is contained in $2I_x$. We consider two sub-cases. First if $r\le \gamma^\frac1n\delta(x)$ then we can invoke \ref{H1} to obtain 
\begin{multline}\label{Case1a}
\iint_{B(x,r) \cap \Omega_*} |\nabla \wcalA(Y)|^2 \delta(Y) dY
\le
\iint_{B(x,r) \cap 2I_x} |\nabla \wcalA(Y)|^2 \delta(Y) dY
\lesssim
\iint_{B(x,r) \cap 2I_x} \delta(Y)^{-1} dY
\\
\approx
\ell(I_x)^{-1} \,r^{n} 
\approx
\delta(x)^{-1} \,r^{n} 
\lesssim
\gamma^\frac1n\,r^{n-1}.
\end{multline}
On the other hand, if $\gamma^\frac1n\delta(x)\le r$ we note that 
\[
B(x,r) \cap \Omega_* \subset 2I_x\cap \Omega_* 
\subset
\bigcup_{Q'\in\dd_{\F,Q}} (U_Q^{*}\cap 2I_x).
\]
It is clear that from construction if $U_{Q'}^{*}\cap 2I_x\neq\emptyset$ then $\ell(Q')\approx\ell(I_x)\approx \delta(x)$. Note also that $\#\{I\in\W: I\cap 2I_x\neq\emptyset\}\lesssim C_n$ hence $\#\{Q'\in\dd: U_{Q'}^{*}\cap 2I_x\neq\emptyset\}\lesssim C_{n,\eta,K}$. Thus, observing that $Q'\in\dd_{\F,Q'}$ for every $Q'\in \dd_{\F,Q}$ we obtain from \eqref{H:small} 
\begin{multline}\label{Case1b}
\iint_{B(x,r) \cap \Omega_*} |\nabla \wcalA(Y)|^2 \delta(Y) dY
\le
\sum_{\substack{Q'\in\dd_{\F,Q}\\ U_{Q'}^*\cap 2I_x\neq\varnothing}}
\iint_{U_Q^*} |\nabla \wcalA(Y)|^2 \delta(Y) dY
=
\sum_{\substack{Q'\in\dd_{\F,Q}\\ U_{Q'}^*\cap 2I_x\neq\varnothing}} \alpha_{Q'}
\\
\le
\sum_{\substack{Q'\in\dd_{\F,Q}\\ U_{Q'}^*\cap 2I_x\neq\varnothing}} m(\dd_{\F,Q'})
\le
\gamma\,\sum_{\substack{Q'\in\dd_{\F,Q}\\ U_{Q'}^*\cap 2I_x\neq\varnothing}}  \sigma(Q')
\lesssim
\gamma\,\,\delta(x)^{n-1}
\lesssim
\gamma^\frac1n\,r^{n-1}.
\end{multline}

\noindent\textbf{Case 2}. $\widetilde{M}^{-1}\,\ell(Q)<r<\diam(\pom_*)\approx\ell(Q)$. This is a trivial case since by construction and \eqref{H:small}  we obtain
\begin{multline}\label{Case2}
\iint_{B(x,r) \cap \Omega_*} |\nabla \wcalA(Y)|^2 \delta(Y) dY 
\le
\sum_{Q'\in\dd_{\F,Q}} \iint_{U_{Q'}^*} |\nabla \wcalA(Y)|^2 \delta(Y) dY 
\\
=
\sum_{Q'\in\dd_{\F,Q}} \alpha_{Q'}
=
m(\dd_{\F,Q})
\le
\gamma\sigma(Q)
\approx
\gamma \ell(Q)^{n-1}
\approx
\gamma r^{n-1}.
\end{multline}

\noindent\textbf{Case 3}. $c\delta(x)<r \le \widetilde{M}^{-1}\,\ell(Q)$. Pick $\hat x\in \pom$ such that $|x-\hat x| = \delta(x)$ and note that $B(x,r)\subset B(\hat{x}, (1+ c^{-1})r)$. Note also that if $Q'\in\dd_{\F,Q}$ is so that $U_{Q'}^*\cap B(x,r)\neq\emptyset$ then we can find $I\in \W_{Q'}$ and $Y\in I^{**}\cap B(x,r)$ so that by \eqref{def:WQ}
\[
\eta^{\frac12}\ell(Q')
\lesssim 
\ell(I)
\approx
\delta(Y)
\le
|Y-x|+\delta(x)
<
(1+c^{-1})\,r.
\]
and for every $y\in Q'$
\begin{multline*}
|y-\hat{x}|
\le
\diam(Q')+\dist(Q',I)+\diam(I)+|Y-x|+|x-\hat{x}|
\\
\lesssim
K^{\frac12}\ell(Q')+r+\delta(x)
\lesssim
K^{\frac12}\,\eta^{-\frac12}(1+ c^{-1})\,r.
\end{multline*}
Consequently, if we write $\widetilde{M}'= C\,K^{\frac12}\,\eta^{-\frac12}\,(1+c^{-1})$ and choose $\widetilde{M}>\widetilde{M}'$, it follows that $\ell(Q')<\widetilde{M}'\,r<\ell(Q)$
and $Q'\subset \Delta(\hat{x}, \widetilde{M}'r)=:\Delta'$. 

We can then find a pairwise disjoint family of dyadic cubes $\{Q_k\}_{k=1}^{\widetilde{N}}$ with uniform cardinality $\widetilde{N}$ (depending on $C_{AR}$ and $n$) so that $2^{-1}\,\widetilde{M}'\,r\le  \ell(Q_k)< \widetilde{M}'\,r $, $Q_k\cap \Delta'\neq\emptyset$ for every $1\le k\le \widetilde{N} $, and $\Delta'\subset \bigcup_{k=1}^{\widetilde{N}} Q_k$. 
Relabeling if necessary, we can assume that there exists $\widetilde{N}'\le \widetilde{N}$ so that $\Delta'\cap Q\subset \bigcup_{k=1}^{\widetilde{N}'} Q_k$ and each $Q_k$ meets $\Delta'\cap Q$ for $1\le k\le \widetilde{N}'$. We would like to observe that necessarily $\widetilde{N}'\ge 1$ since we have shown that $Q'\subset \Delta'$ for every $Q'\in\dd_{\F,Q}$ so that $U_{Q'}^*\cap B(x,r)\neq\emptyset$. Also $Q_k\subset Q$ for $1\le k\le \widetilde{N}'$ since $\ell(Q_k)<\widetilde{M}'\,r<\ell(Q)$ and $Q_k$ meets $Q$.
Moreover, for every such a $Q'$ we necessarily have $Q'\subset Q_k$ for some $1\le k\le \widetilde{N}'$ since
$Q'\subset \Delta'\cap Q$, hence $Q'$ meets some $Q_k$ and also $\ell(Q')<\widetilde{M}'\,r\le 2\ell(Q_k)$ which forces $Q'\subset Q_k$. All these and \eqref{H:small} readily imply
\begin{multline}\label{Case3}
\iint_{B(x,r) \cap \Omega_*} |\nabla \wcalA(Y)|^2 \delta(Y) dY 
\le
\sum_{\substack{Q'\in\dd_{\F,Q}\\ U_{Q'}^*\cap B(x,r)\neq\varnothing}} \iint_{U_{Q'}^*} |\nabla \wcalA(Y)|^2 \delta(Y) dY 
=
\sum_{\substack{Q'\in\dd_{\F,Q}\\ U_{Q'}^*\cap B(x,r)\neq\varnothing}} \alpha_{Q'}
\\
\le
\sum_{k=1}^{\widetilde{N}'}
\sum_{\substack{Q'\in\dd_{\F,Q}\\ Q'\subset Q_k\subset Q}} \alpha_{Q'}
=
\sum_{k=1}^{\widetilde{N}'}
\sum_{Q'\in\dd_{\F,Q_k}} \alpha_{Q'}
=
\sum_{k=1}^{\widetilde{N}'} m(\dd_{\F,Q_k})
\le
\gamma \sum_{k=1}^{\widetilde{N}'} \sigma(Q_k)
\lesssim
\gamma r^{n-1}.
\end{multline}

\medskip

Combining what we have obtained in all the cases we see that \eqref{Case1a}, \eqref{Case1b}, \eqref{Case2}, and \eqref{Case3} give, since $0<\gamma<1$, that 
\[
\frac1{r^{n-1}}\iint_{B(x,r) \cap \Omega_*} |\nabla \wcalA(Y)|^2 \delta(Y) dY
\le
C_0\gamma^{\frac1n},
\]
for some constant $C_0\ge 1$ depending on the allowable constants and where we recall that $\gamma$ is at our choice. Hence we just need to pick $\gamma< (C_0^{-1}\epsilon)^n$ to conclude as desired \eqref{sC}.

\section{Transference of the \texorpdfstring{$A_\infty$}{A-infinity} property to sawtooth domains}\label{sect:trsf}

In this section we show that the $A_\infty$ property for the elliptic operator $L$ in $\Omega$ can be transferred to sawtooth subdomains with constants that only depend on the allowable constants. We first work with sawtooth subdomains which are compactly contained in $\Omega$ and then we consider the general case using that interior sawtooth subdomains exhaust general sawtooth domains.

\begin{theorem}\label{thm:trsf}
Let $\Omega\subset \RR^n$ be a uniform domain with Ahlfors regular boundary. Let $\wcalA$ be a \textbf{symmetric} uniformly elliptic matrix on $\Omega$ and $L=-\divg(\wcalA\nabla)$ . Assume the following two properties: 
\begin{enumerate}[label=\textup{(\arabic*)}, itemsep=0.2cm] 
		\item\label{1-thm:trsf} The elliptic measure $\omega_L$ associated with the operator $L$ relative to the domain $\Omega$ is of class $A_\infty$ with respect to the surface measure. 
		
		\item\label{2-thm:trsf} For every fundamental chord-arc subdomain $\PP$ of $\Omega$, see \eqref{fundamental-casd}, the elliptic measure associated with $L$ relative to the domain $\PP$ is also of class $A_\infty$ with respect to the surface measure of $\PP$, with uniform $A_\infty$ constants.
\end{enumerate}
	For every $Q\in \DD$ and every family of pairwise disjoint dyadic subcubes $\F = \{Q_j\} \subset \DD_Q$, let $\Omega_*=\Omega_{\F,Q}$ (or $\Omega_*=\Omega_{\F,Q}^*$) be the associated sawtooth domain, and $\omega_*$ and $\sigma_* = \HH^{n-1}|_{\partial\Omega_*}$ be the elliptic measure for $L$ and the surface measure of $\Omega_*$. Then $\omega_* \in A_\infty(\sigma_*)$, with the $A_\infty$ constants independent of $Q$ and $\F$.
\end{theorem}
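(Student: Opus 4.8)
The plan is to establish the equivalent reverse-Hölder formulation $\omega_*\in RH_q(\sigma_*)$ for some $q>1$; recall from the discussion around \eqref{eq1.wRH} that, since $\sigma_*=\mathcal{H}^{n-1}|_{\partial\Omega_*}$ is doubling (by Lemma \ref{lemma:sawtooth-inherit} and Ahlfors regularity of $\partial\Omega_*$), this is equivalent to $\omega_*\in A_\infty(\sigma_*)$. By Lemma \ref{lm:CFMS}, the change of pole formula Lemma \ref{lm:cop}, and the doubling of $\sigma_*$, it suffices to prove a scale-invariant bound
\[
\frac{\omega_*^{A_*}(F)}{\omega_*^{A_*}(\Delta_*)}\lesssim \left(\frac{\sigma_*(F)}{\sigma_*(\Delta_*)}\right)^{\theta_*}
\]
for every surface ball $\Delta_*=B(x_*,r_*)\cap\partial\Omega_*$, every Borel $F\subset\Delta_*$, and $A_*$ a corkscrew point of $\Omega_*$ relative to $\Delta_*$, with $\theta_*>0$ and implicit constant depending only on the allowable constants. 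I would first carry this out assuming $\Omega_*=\Omega_{\F,Q}$ is \emph{compactly contained} in $\Omega$ (the only case needed in the proof of Theorem \ref{thm:main}, after the $\F_N$-augmentation), and recover the general case --- and the fattened version $\Omega_{\F,Q}^*$, handled identically with $I^{**}$ (resp.\ $I^{***}$) replacing $I^*$ --- by exhausting $\Omega_{\F,Q}$ by the interior sawtooths $\Omega_{\F_N,Q}$ and passing to the limit, the constants being uniform in $N$.

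The argument splits according to the two scales governing the local geometry of $\partial\Omega_*$, namely $r_*$ versus $\delta(x_*)=\dist(x_*,\partial\Omega)$. In the \emph{interior regime} $r_*\lesssim\delta(x_*)$, only boundedly many Whitney boxes meet $B(x_*,Cr_*)$ (for $C$ depending on $\tau$ and the implicit constants), all of side length $\approx\delta(x_*)$; consequently $\Omega_*\cap B(x_*,Cr_*)$ and $\partial\Omega_*\cap B(x_*,Cr_*)$ coincide with $\PP\cap B(x_*,Cr_*)$ and $\partial\PP\cap B(x_*,Cr_*)$ for a suitable fundamental chord-arc subdomain $\PP$ of $\Omega$ as in \eqref{fundamental-casd}, whose chord-arc constants are controlled by the allowable constants. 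Using Bourgain's estimate Lemma \ref{lm:Bourgain}, the comparison principle Lemma \ref{lm:comp}, and Harnack's inequality, I would transfer the $A_\infty$ estimate for $\omega_L^\PP$ (hypothesis \ref{2-thm:trsf}) to $\omega_*$ on $\Delta_*$, noting that $\sigma_*$ and $\mathcal{H}^{n-1}|_{\partial\PP}$ agree on $B(x_*,Cr_*)$. This settles the interior regime; note in particular that it yields a scale-invariant $A_\infty$-type estimate for $\omega_*$ restricted to any single Whitney face $\Sigma_I:=\partial I^*\cap\partial\Omega_*$, since there the relevant scale $\ell(I)$ is always comparable to the distance of $\Sigma_I$ to $\partial\Omega$.

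In the \emph{boundary regime} $r_*\gtrsim\delta(x_*)$, pick $\hat x\in\partial\Omega$ with $|\hat x-x_*|=\delta(x_*)$ and set $\Delta=\Delta(\hat x, Cr_*)\subset\partial\Omega$; by hypothesis \ref{1-thm:trsf}, $\omega_L\in A_\infty(\sigma)$ on $\Delta$. Decompose $F$ over the Whitney faces it meets, $F=\bigcup_I (F\cap\Sigma_I)$, each $\Sigma_I$ lying at height $\approx\ell(I)$ over a dyadic cube $Q_I\in\DD$ with $\ell(Q_I)\approx\ell(I)$ and $Q_I\subset\Delta$; these $Q_I$ have bounded overlap by \eqref{def:WQ} and \eqref{eqWh1}. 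Separate the \emph{full faces} ($\sigma_*(F\cap\Sigma_I)\ge\frac12\sigma_*(\Sigma_I)$) from the rest. On a full face, $\sigma(Q_I)\approx\sigma_*(\Sigma_I)\lesssim\sigma_*(F\cap\Sigma_I)$, so the shadow $E:=\bigcup_{I\ \mathrm{full}}Q_I\subset\Delta$ has $\sigma(E)\lesssim\sigma_*(F)$; and, combining Lemma \ref{lm:Bourgain} and Lemma \ref{lm:doubling} in $\Omega_*$ with Lemma \ref{lm:CFMS} in $\Omega_*$ and in $\Omega$, the maximum principle $G_{\Omega_*}\le G_{\Omega}$, and the fact (easily checked from \eqref{eqWh1}, using $\Omega^c\subset\Omega_*^c$) that a corkscrew of $\Omega_*$ relative to a surface ball of radius $\approx\ell(I)$ around $\Sigma_I$ is also a corkscrew of $\Omega$ relative to $Q_I$, one gets $\omega_*^{A_*}\big(\bigcup_{I\ \mathrm{full}}F\cap\Sigma_I\big)\lesssim\omega_L^{A_*}(E)$; since $\omega_*^{A_*}(\Delta_*)\approx1\approx\omega_L^{A_*}(\Delta)$ by Bourgain, the $A_\infty$ property of $\omega_L$ in $\Omega$ (and Lemma \ref{lm:cop}) closes this piece. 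For the remaining \emph{non-full} faces one applies the interior-regime estimate on each $\Sigma_I$ separately, moving the pole from a local corkscrew to $A_*$ via Lemma \ref{lm:comp} and Lemma \ref{lm:doubling}, and sums over $I$. The hard part will be precisely this last step: reconciling the surface-measure bookkeeping with the elliptic-measure bookkeeping under the ``projection'' of $\partial\Omega_*$ onto $\partial\Omega$ --- the Green function comparison delivers the needed one-sided bound on the elliptic measure essentially for free, but the surface measures are only comparable in the unfavorable direction, so summing the face-by-face estimates requires a stopping-time / Carleson-packing argument controlling the errors produced when comparing the density of $\omega_*$ on each $\Sigma_I$ to its density on $\Delta_*$, for which I expect to use the self-improvement of $\omega_L\in A_\infty(\sigma)$ to an $RH_q$ estimate as in \eqref{eq1.wRH}, the doubling of $\omega_L$, and the thin-boundary bound \eqref{thin-boundary}, all made uniform in $Q$ and $\F$.

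Patching the two regimes over a Whitney decomposition of $\partial\Omega_*$ then yields $\omega_*\in RH_q(\sigma_*)$, hence $\omega_*\in A_\infty(\sigma_*)$, with constants depending only on the allowable constants and on the uniform $A_\infty$ constants in hypothesis \ref{2-thm:trsf} --- in particular independent of $Q$ and $\F$. Finally, letting $N\to\infty$ in the exhaustion $\Omega_{\F_N,Q}\uparrow\Omega_{\F,Q}$ (using convergence of the corresponding Green functions and elliptic measures, together with the $N$-uniformity of the $RH_q$ constants just obtained) gives the statement for $\Omega_*=\Omega_{\F,Q}$, and the same reasoning applies verbatim to $\Omega_*=\Omega_{\F,Q}^*$.
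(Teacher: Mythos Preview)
Your overall architecture matches the paper's: reduce to a compactly contained sawtooth, split according to whether $r_*$ is small compared to the local Whitney scale (your ``interior regime'' is precisely the paper's \textbf{Base case}, Lemma~\ref{lemma:base-case}), and in the large-$r_*$ regime push the estimate down to $\partial\Omega$ via the projection $\Sigma_I\mapsto Q_I$, then sum using the thin-boundary property. Two points deserve comment.

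First, your formulation aims at the $A_\infty$ inequality $\omega_*(F)/\omega_*(\Delta_*)\lesssim(\sigma_*(F)/\sigma_*(\Delta_*))^{\theta_*}$ directly, with a full/non-full face split of $F$. The paper instead proves the $RH_p$ bound $\int_{\Delta_*}(\textbf{k}_*^X)^p\,d\sigma_*\lesssim\sigma_*(\Delta_*)^{1-p}$ for a fixed $p=\min\{p_1,p_2\}$, which is additive under the face decomposition: after the Base case yields \eqref{eq:Si}, one obtains \eqref{eq:tosumN} and is left with summing $\sum_i\int_{Q^i}(\textbf{k}^X)^p\,d\sigma$. That sum is controlled not by a full/non-full dichotomy but by a structural trichotomy on the $Q^i$'s (the collections $\mathcal{N}_0(k)$, $\F_1(k)$, $\F_2(k)$ in \textbf{Step 2}), each handled by the thin-boundary Lemma~\ref{lm:thinbd} combined with Gehring self-improvement (Remark~\ref{rem:daydic-lemma:omega}). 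Your acknowledged ``hard part'' --- summing the non-full face contributions --- is exactly where this $RH_p$/thin-boundary machinery does the work; the direct $A_\infty$ form does not sum well, and you would in practice be forced back to the $RH_p$ integrals anyway.

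Second, your proposed passage to the general sawtooth by taking $N\to\infty$ in the $RH_q$ constants is delicate, because the boundaries $\partial\Omega_{\F_N,Q}$ vary with $N$ and there is no obvious weak convergence of $\omega_*^N$ (on $\partial\Omega_*^N$) to $\omega_*$ (on $\partial\Omega_*$) that would carry $RH_q$ across. The paper avoids this entirely by invoking the equivalence of $A_\infty$ with Carleson measure estimates for bounded solutions (Theorem~\ref{thm:CMEAinfty}): the uniform $A_\infty$ on each $\Omega_*^N$ yields a uniform CME there, Fatou's lemma passes the CME to $\Omega_*$ (since $\delta_*^N\nearrow\delta_*$ and $\Omega_*^N\nearrow\Omega_*$), and the reverse implication of Theorem~\ref{thm:CMEAinfty} closes the argument. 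You should plan to use this detour rather than a direct limit of reverse-H\"older estimates.
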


Note that if $\wcalA$ is a non necessarily symmetric matrix satisfying hypotheses \ref{H1} and \ref{H2} in $\Omega$, we can easily verify it also satisfies the Kenig-Pipher condition relative to every fundamental chord-arc subdomain. Indeed, since $\PP\subset \Omega$ then $\delta_{\PP}(\cdot)\le \delta(\cdot)$ and \ref{H1} in $\PP$ is automatic. On the other hand, let $\PP=\interior \left( \bigcup_{j=1}^{m_1} I_j^\ast\right)$  with  $I_j\in\W $ and $ I\cap I_j \not =\emptyset$ and take $x\in\partial\PP$ and $r\le \diam\PP\lesssim \ell(I)$. Note that  \ref{H1} implies  $|\nabla \wcalA(Y)|^2\lesssim \ell(I)^{-2} $ for every $Y\in\PP$ since $\delta(Y)\approx\ell(I)$ , hence 
\begin{equation}\label{carleson-in-p}
\frac{1}{r^{n-1}}\iint_{B(x,r)\cap\PP}|\nabla \wcalA(Y)|^2 \delta_{\PP}(Y) dY\lesssim\frac{1}{r^{n-1}\ell(I)^2}\iint_{B(x,r)\cap\PP} \delta_{\PP}(Y) dY\lesssim 
\frac{r^{n+1}}{r^{n-1}\ell(I)^2}\lesssim 1.
\end{equation}
That is, \ref{H1} in $\PP$ holds as well. Thus by \cite{KP} (and the slight improvement in \cite{HMT1}), and the fact that chord-arc domains can be approximated by Lipschitz domains, one obtains that the elliptic measure for $L$ relative to $\PP$ is also of class $A_\infty$ with respect to the surface measure of $\PP$ and \ref{2-thm:trsf} in the previous result holds.  On the other hand, \cite[Theorem 1.6]{CHMT} asserts that for any uniform domain $\Omega$, and under the assumptions   \ref{H1} and \ref{H2}, one has that $\omega_L\in A_\infty(\sigma)$ if and only if $\omega_{L^{\rm sym}}\in A_\infty(\sigma)$ where $L^{\rm sym}$ is the operator associated with the symmetric matrix $\wcalA^{\rm sym}=(\frac{a_{ij}+a_{ji}}2)_{i,j=1}^n$. Note that $\wcalA^{\rm sym}$ is also a uniformly elliptic matrix in $\Omega$ with the same ellipticity constants as $\wcalA$ and satisfies  \ref{H1} and \ref{H2} with constants which are controlled by those of $\wcalA$. With all these observations we immediately get the following corollary:  
\begin{corollary}\label{corol:KP-transfer}
	Let $\Omega\subset \RR^n$ be a uniform domain with Ahlfors regular boundary. Suppose that $\wcalA$ is a (non necessarily symmetric) uniformly elliptic matrix on $\Omega$ 
	satisfying the hypotheses \textup{\ref{H1}} and \textup{\ref{H2}}, and that the elliptic measure $\omega_L$ associated with the operator $L$ relative to the domain $\Omega$ is of class $A_\infty$ with respect to the surface measure. Then the elliptic measure associated with $L$ relative to any sawtooth domain is of class $A_\infty$, with uniform constants.
\end{corollary}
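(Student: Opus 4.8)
The plan is to obtain Corollary \ref{corol:KP-transfer} from Theorem \ref{thm:trsf} by symmetrizing the coefficient matrix, checking the two hypotheses of that theorem for the symmetric operator, and transferring the conclusion back to $L$. Set $\wcalA^{\rm sym}=(\frac{a_{ij}+a_{ji}}{2})_{i,j}$ and $L^{\rm sym}=-\divg(\wcalA^{\rm sym}\nabla)$: this is symmetric, uniformly elliptic with the same ellipticity constants, and satisfies \ref{H1} and \ref{H2} with constants controlled by those of $\wcalA$. Since $\omega_L\in A_\infty(\sigma)$, \cite[Theorem 1.6]{CHMT} gives $\omega_{L^{\rm sym}}\in A_\infty(\sigma)$ with allowable constants, which is hypothesis \ref{1-thm:trsf} for $L^{\rm sym}$. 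For hypothesis \ref{2-thm:trsf}, given a fundamental chord-arc subdomain $\PP=\interior(\bigcup_j I_j^*)$ one has $\delta_\PP\le\delta$ so \ref{H1} passes to $\PP$, and since \ref{H1} forces $|\nabla\wcalA^{\rm sym}|\lesssim\ell(I)^{-1}$ on $\PP$, the computation \eqref{carleson-in-p} shows \ref{H2} holds in $\PP$ with an allowable constant; then \cite{KP} (in the form of \cite[Appendix A]{HMT1}), the big-pieces-of-Lipschitz-subdomains approximation of \cite{DJe}, and Lemmas \ref{lm:comp} and \ref{lm:cop} yield $\omega_{L^{\rm sym},\PP}\in A_\infty(\HH^{n-1}|_{\partial\PP})$ with constants uniform in $\PP$. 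Theorem \ref{thm:trsf} then gives $\omega_{L^{\rm sym},\Omega_*}\in A_\infty(\sigma_*)$, uniformly in $Q$ and $\F$, where $\Omega_*=\Omega_{\F,Q}$ or $\Omega_{\F,Q}^*$ and $\sigma_*=\HH^{n-1}|_{\partial\Omega_*}$. Finally, by Lemma \ref{lemma:sawtooth-inherit} $\Omega_*$ is a uniform domain with Ahlfors regular boundary (allowable constants), \ref{H1} holds there since $\delta_{\Omega_*}\le\delta$, and arguing as in Section \ref{s:small-extrapolation} but without the smallness --- using only $\|\wcalA\|_{\rm Car}<\infty$ in place of \eqref{H:small} --- the Carleson functional of $|\nabla\wcalA|^2\delta_{\Omega_*}$ over $\Omega_*$ is finite; so \cite[Theorem 1.6]{CHMT} applied in $\Omega_*$ (with $\wcalA^{\rm sym}$ its symmetric part there as well) upgrades $\omega_{L^{\rm sym},\Omega_*}\in A_\infty(\sigma_*)$ to $\omega_{L,\Omega_*}\in A_\infty(\sigma_*)$, with uniform constants. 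This proves the corollary modulo Theorem \ref{thm:trsf}.

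Thus the only substantial point is Theorem \ref{thm:trsf}, and I would argue as follows. First reduce, by truncating $\F$ to the families $\F_N$ as above, to sawtooth domains $\Omega_*$ compactly contained in $\Omega$, so that $\partial\Omega_*$ is Ahlfors regular (Lemma \ref{lemma:sawtooth-inherit}) and the comparison principle, non-degeneracy, change-of-pole, doubling and CFMS estimates of Section \ref{s3} are all available on $\Omega_*$; uniformity of the constants in the compactly contained case then yields the general case by monotone convergence as in the Remark above. Next, decompose $\partial\Omega_*$ into its boundary part $\partial\Omega_*\cap\partial\Omega$ and its Whitney part, the latter contained in the union of faces of the fattened cubes $I^*$ with $I\in\W_{\F,Q}$. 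On the boundary part, the maximum principle gives $\omega_{L,\Omega_*}^X\le\omega_L^X$, and hypothesis \ref{1-thm:trsf} together with Lemmas \ref{lm:cop}, \ref{lm:doubling} and \ref{lm:CFMS} transfers the reverse-H\"older inequality (with respect to $\sigma=\HH^{n-1}|_{\pO}$, which agrees with $\sigma_*$ there) to the relevant surface balls of $\partial\Omega_*$.

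On the Whitney part, each face of some $I^*$ is, up to bounded overlap and comparability of scales --- here one uses \eqref{eq:tausmall1}--\eqref{eq:tausmall2} and the bounded cardinality of $\W_Q$ --- a boundary piece of a fundamental chord-arc subdomain $\PP$ as in \eqref{fundamental-casd}. Using the maximum principle and the Harnack chains built into the definition of $\W_Q$, I would compare $\omega_{L,\Omega_*}^X$ for $X$ near that face with $\omega_{L,\PP}^{X'}$ for a nearby corkscrew $X'$ of $\PP$; hypothesis \ref{2-thm:trsf}, together with the comparability of $\HH^{n-1}|_{\partial\PP}$ and $\sigma_*$ on the face, then yields the $A_\infty$ bound there. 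The last task is to patch the boundary-part and Whitney-part estimates into a single reverse-H\"older inequality for $\omega_{L,\Omega_*}$ with respect to $\sigma_*$; I would do this with a stopping-time/Corona decomposition of the dyadic grid on $\partial\Omega_*$ organized around the way the stopping cubes $Q_j\in\F$ separate the two parts of $\partial\Omega_*$, summing the local $A_\infty$ data scale by scale.

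I expect this final patching, and the uniformity near the stopping cubes, to be the main obstacle: the interior Whitney faces and the original boundary interlock intricately around $\bigcup_j Q_j$, the poles in the maximum-principle comparisons and in the repeated applications of the change-of-pole formula must be selected carefully, and one must verify that summing the local reverse-H\"older estimates retains constants depending only on the allowable constants and on the $A_\infty$ constants furnished by \ref{1-thm:trsf} and \ref{2-thm:trsf} --- in particular independent of $Q$, $\F$, and the truncation level $N$.
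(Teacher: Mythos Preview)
Your first paragraph --- reducing to the symmetric part via \cite[Theorem 1.6]{CHMT}, verifying hypotheses \ref{1-thm:trsf} and \ref{2-thm:trsf} of Theorem \ref{thm:trsf} for $L^{\rm sym}$ using \eqref{carleson-in-p} and \cite{KP,DJe}, invoking Theorem \ref{thm:trsf}, and then transferring back to $L$ in $\Omega_*$ by checking \ref{H1}--\ref{H2} there and reapplying \cite[Theorem 1.6]{CHMT} --- is correct and is exactly the paper's derivation of the corollary (indeed you are more explicit than the paper about the final passage from $L^{\rm sym}$ back to $L$ in $\Omega_*$).

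Where your proposal goes wrong is in the sketch of Theorem \ref{thm:trsf} itself. After the $\F_N$ truncation, $\partial\Omega_*\cap\partial\Omega=\emptyset$, so your ``boundary part'' is empty and your plan uses hypothesis \ref{1-thm:trsf} nowhere. But \ref{1-thm:trsf} is indispensable: it is precisely what makes the patching of the local (Whitney-face) estimates summable with $N$-independent constants. The paper's mechanism is not a Corona decomposition on $\partial\Omega_*$; rather, for each boundary piece $S_*^i\subset\partial(I^i)^*$ one first uses \ref{2-thm:trsf} on a fundamental chord-arc subdomain (your local step is fine here), but then one compares $\omega_*^X$ to $\omega^X$ on $\partial\Omega$ via the maximum principle, obtaining $\omega_*^X(\Delta_*^i)/\sigma_*(\Delta_*^i)\lesssim \omega^X(Q^i)/\sigma(Q^i)$ for an associated dyadic cube $Q^i\subset\partial\Omega$ (see \eqref{eq:cohm}). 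This converts the sum over Whitney faces into a sum of $\int_{Q^i}(\textbf{k}^X)^p\,d\sigma$ over cubes $Q^i\in\DD_{\F,Q}$ that all sit within $O(\ell(Q^i))$ of either $\partial\Omega\setminus Q$ or some stopping cube $Q_j\in\F$; the thin-boundary estimate (Lemma \ref{lm:thinbd}) plus the $RH_p$ bound for $\omega$ on $\partial\Omega$ coming from \ref{1-thm:trsf} then controls this sum. Without routing through $\omega$ on $\partial\Omega$ and invoking \ref{1-thm:trsf}, the local $A_\infty$ data from \ref{2-thm:trsf} alone cannot be summed across the unboundedly many scales present in $\partial\Omega_*$ with constants independent of $N$; your ``stopping-time/Corona on $\partial\Omega_*$'' does not supply the missing cancellation.
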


\begin{proof}[Proof of Theorem \ref{thm:trsf}]

The proof Theorem \ref{thm:trsf} has several steps. We work with $\Omega_*=\Omega_{\F, Q}$ as the proof with $\Omega_{\F, Q}^*$ is identical. We first assume that the sawtooth domain $\Omega_\ast=\Omega_{\F, Q}$ is \textbf{compactly contained} in $\Omega$ and show that $\omega_\ast \in A_\infty(\sigma_*)$, with the $A_\infty$ constants independent of $Q$ and $\F$. Here we  use $\omega_*$ to denote the elliptic measure associated with $L$ relative to $\Omega_*$. % Then in the case when $\Omega_\ast$ is not compactly contained in $\Omega$, we approximate $\Omega_\ast$ by interior compactly contained sawtooth domains in $\Omega$. We apply the result from the first case and use Theorem \ref{thm:CMEAinfty} to show that the $A_\infty$ constants are preserved when passing to the limiting domain.

Under the assumption that $\Omega_\ast$ is compactly contained in $\Omega$, for $Q$ fixed, let $N$ be an integer such that $\dist(\Omega_*,\pom) \approx 2^{-N} \ell(Q)$. 
Then $\Omega_*$ if formed by a union of fattened Whitney boxes of side length  controlled from below by $c\,2^{-N} \ell(Q)$ hence $\Omega_*$ clearly 
satisfies a \textit{qualitative exterior corkscrew condition}, that is, it satisfies the exterior corkscrew condition for surface balls up to a scale of the order of $2^{-N}\ell(Q)$. In the case of  the Kenig-Pipher operators, this information alone does not suffice to derive the desired $A_\infty$ property, with constant independent of $N$; however this does give us the qualitative absolute continuity $\omega_*^X \ll \sigma_*$ for any $X\in \Omega_*$ (since $\Omega_*$ is a chord-arc domain with constants depending on $N$). Note that Theorem \ref{thm:trsf} is nonetheless written for a more general class and it is not obvious whether we can automatically have the desired absolute continuity. This will be shown in the course of the proof.

Our main task is to then show that $\omega_* \in A_\infty(\sigma_*)$ with constants that depend only on the allowable constants. If we write $\textbf{k}_*:=d\omega_*/d\sigma_*$ for the Radon-Nikodym derivative, by the change of pole  formula Lemma \ref{lm:cop}, obtaining 
$\omega_* \in A_\infty(\sigma_*)$, it is equivalent to prove the following: 
there exists an exponent $p\in (1,\infty)$ and a constant $C$ depending only on the allowable constants such that
for any surface ball 
$\Delta_*=B_*\cap\partial\Omega_*$ centered at $\pom_*$, with radius smaller than the diameter of $\pom_*$, and for $X=X_{\Delta_*}\in \Omega_*\cap B_*$, a corkscrew point relative to $\Delta_*$, the following holds
\begin{equation}\label{RH}
	\int_{\Delta_*} \left(\textbf{k}_*^X\right)^p d\sigma_* \leq C \sigma_*(\Delta_*)^{1-p}.
\end{equation}
Since $\diam(\Omega_*) \approx \ell(Q)$, it is easy to see by a standard covering argument and Harnack's inequality that it suffices to prove \eqref{RH} for 
$r_* \le M_1^{-1} \ell(Q)$, 
where $M_1$ is a suitably large fixed constant.  By hypothesis \ref{1-thm:trsf}, $\omega_L\in A_\infty(\sigma)$, hence it belongs to the reverse H\"older class with some exponent $p_1>1$ (see \eqref{eq1.wRH}). Also, by hypothesis \ref{2-thm:trsf} we know that the elliptic measure relative to any fundamental chord-arc subdomain $\PP$ satisfies an $A_\infty$ condition with respect to the corresponding surface measure with uniform bounds. In turn, there exists $p_2>1$ and a uniform constant so that any of these elliptic measures belong to the reverse Hölder class with this exponent $p_2$ and with the same uniform constant  (see \eqref{eq1.wRH}).  We shall henceforth set $p:= \min \{p_1,p_2\}$, and it 
is for this $p$ that we shall prove \eqref{RH}. 

To start with the proof, recall that as observed above, since $\dist(\Omega_*, \pom) \approx 2^{-N}\ell(Q)$, it follows that all the dyadic cubes $Q'\in \DD_{\F,Q}$ have length $\ell(Q') \gtrsim 2^{-N}\ell(Q)$, and the cardinality of $\DD_{\F,Q}$ is bounded by a constant $C(N)$. Hence $\Omega_*=\Omega_{\F,Q}$ is formed by the finite union of Whitney regions $U_{Q'}$ with $Q' \in \DD_{\F,Q}$ satisfying $\ell(Q') \gtrsim 2^{-N}\ell(Q)$. In turn each $U_{Q'}$ is a polyhedral domain consisting of a finite number of fattened Whitney boxes with side length of the order of $\ell(Q')$. In particular there exists a finite index set $\N_*$ so that 
\begin{equation}\label{bdry-Omega*}
	\pO_* \subset \bigcup_{i\in \N_*} \partial \left((I^i)^* \right) \cap \pO_* = : \bigcup_{i\in \N_*} S_*^i.
\end{equation}
where $S_*^i \neq \emptyset$ for each $i\in \N_*$, $\interior((I^i)^*)\subset \Omega_*$, and $\ell(I^i)\gtrsim 2^{-N}$. 
For each $I^i$, with $i\in \N_*$, we pick $Q^i\in\DD_{\F,Q}$ such that $\W_{Q^i} \ni I^i$ (there could be more than one such a $Q^i$ in which case we just select one).  Note that different $I^i$'s may correspond to the same $Q^i$, but each $Q^i$ may only repeat up to a finitely many times, depending only on the allowable constants.  
Since $S_*^i$ is contained in the boundary of a fattened Whitney box $(I^i)^*$,
\begin{equation}\label{Sitmp}
	\diam(S_*^i) \lesssim \ell(I^i) \approx \ell(Q^i), \text{ and } \dist(S_*^i, \pO) \geq \dist((I^i)^*, \pO) \approx \ell(Q^i). 
\end{equation} 
On the other hand, the fact that $S_*^i \subset \pO_*$ means that $I^i$ intersects some $J^i\in \W$ so that if $J^i\in \W_{Q''}$ then  $Q'' \notin \DD_{\F,Q}$.
If we pick $\widetilde{Q}^i\in\dd$ so that $\ell(\widetilde{Q}^i)=\ell(J^i)$ and $\dist(J^i,\pom)=\dist(J^i, \widetilde{Q}^i)$ then as mentioned right below \eqref{def:WQ0} we have that $J^i\in \W^0_{\widetilde{Q}^i}\subset \W_{\widetilde{Q}^i}$, therefore $\widetilde{Q}^i\notin \DD_{\F,Q}$. Recalling \eqref{eq:tausmall2} and the comments after it, we know that $tJ^i \subset \Omega\setminus\Omega_*$ and  $\partial \left( (I^i)^* \right) \cap \pO_*$ contains  
an $(n-1)$-dimensional ball  with radius of the order of $\min\{\ell(I^i), \ell(J^i)\}\approx \ell(I^i)$. Denote that $(n-1)$-dimensional ball by  $\Delta_*^i \subset S_*^i$. 
This implies, combined with \eqref{Sitmp}, that
\begin{equation}\label{eq:sizeSi}
	r(\Delta_*^i) \approx \diam(S_*^i) \approx \ell(I^i) \approx \ell(Q^i)\ \text{\ and\ } \dist(S_*^i, \pO) \approx \dist(\Delta_*^i, \pO) \approx \ell(I^i) \approx \ell(Q^i).
\end{equation}

At this stage we consider several cases. In the \textbf{Base case}, see Lemma \ref{lemma:base-case}, we treat surface balls $\Delta_*$ with small radii so that $\Delta_*$ is contained in a uniformly bounded union of Whitney cubes of comparable sides. In the case when $\Delta_*$ is large we decompose the intersection of $\Delta_*$ in small pieces to which the base case can be applied (\textbf{Step 1}). We then put all the local estimates together to obtain a global one (\textbf{Step 2}). This requires Lemma \ref{lm:thinbd} and to consider several cases to account for all the small pieces.

Let $\Delta_* = B^*\cap  \pO_* \subset \Omega$, with $B_*=B(x_*,r_*)$, $x_*\in\pO_*$ and $0<r_*<\diam(\pom_*)$. Since $\Omega_*$ is a uniform domain (see Lemma \ref{lemma:sawtooth-inherit}), we can pick $X_{\Delta_*}\subset B_*\cap\Omega_*$, a Corkscrew point relative to $\Delta_*$ in $\Omega_*$, so that
$\delta_*(X):=\dist(X,\pom_*) \approx r_*$. 
 Write
\begin{equation}\label{eq:cover}
\Delta_* \subset \bigcup_{i\in \N_{\Delta_*}} S_*^i, \quad \text{ where }  \N_{\Delta_*}:=\{i:\in \N_{*}: \Delta_* \cap S_*^i \neq \emptyset\}.
\end{equation}

\begin{lemma}[Base case]\label{lemma:base-case}
Using the notation above we have that $\omega_*\ll \sigma_*$ in $\pom_*$.
Moreover if there exists  $i\in \N_{\Delta_*}$ such that $r_*\leq \frac{\tau}{8} \ell(I^i)$ then
\begin{equation}\label{conc-base-case}
\int_{\Delta_*} \left( \textup{\textbf{k}}_*^{X_{\Delta_*}} \right)^p d\sigma_*
\lesssim
\sigma_*(\Delta_*)^{1-p}\,,
\end{equation}
where $\textup{\textbf{k}}_* := d\hm_*/d\sigma_*$, $p$ is as above, and the implicit constant only depends on the allowable constants.
\end{lemma}

\begin{proof}
We first claim that 
\begin{equation}\label{eq:coversmall}
	2\Delta_* \subset \bigcup_{\substack{i'\in \N_*\\ I^{i'} \cap I^i \neq \varnothing} } S_*^{i'} \quad {\rm and}
	\quad 2B_*\cap\Omega_* \subset \bigcup_{\substack{i'\in \N_* \\ I^{i'} \cap I^i \neq \varnothing} } (I^{i'})^*.
\end{equation}
In fact, for any $i' \in \N_*$, if $S_*^{i'}$ intersects $2\Delta_*$, or if
$2B_*\cap\Omega_*$ intersects $ (I^{i'})^*$, then our current assumption gives 
\[
	\dist\big( (I^i)^*, (I^{i'})^* \big) \leq \dist \big((I^i)^* \cap 2B_*, (I^{i'})^* \cap 2B_* \big)
	 \leq \diam(2B_*) = 4r_*\leq \frac{\tau}{2} \ell(I^i),
\]
and thus 
\[ (I^i)^{**} \cap (I^{i'})^{**}\,\supset\, (I^i)^{**} \cap (I^{i'})^*\, \neq\, \emptyset. \]
By the choice of $\tau$, i.e., by \eqref{eq:tausmall1},  we then have $I^i \cap I^{i'} \neq \emptyset$ and the claim is proved. 
Next, let $m_1$ denote the maximal number of Whitney boxes intersecting $I^i$. Note that $m_1$ only depends on the constructions of the Whitney cubes, hence just on dimension. By relabeling \eqref{eq:coversmall} we write
\begin{equation}\label{eq:coversmall2}
	2\Delta_* \subset \bigcup_{i'=1}^{m_1} S_*^{i'}\quad {\rm and} \quad 2B_*\cap\Omega_* 
	\subset \bigcup_{i'=1}^{m_1}(I^{i'})^*.
\end{equation}
Moreover by \eqref{eq:sizeSi}, for each $i'=1, \dots, m_1$, we have $
	\diam(S_*^{i'}) \approx \ell(I^{i'}) \approx \ell(I^i)$.
Set  then 
$$\PP:= \interior\left( \bigcup_{i'=1}^{m_1} (I^{i'})^*\right)\subset\Omega_*,$$
which by construction is a fundamental chord-arc subdomain $\PP$ of $\Omega$, see \eqref{fundamental-casd}. 
Note that since $2B_*\cap\Omega_* $ is open then \eqref{eq:coversmall2} says that $2B_*\cap\Omega_* \subset \PP$ and hence $2B_*\cap\Omega_* =2B_*\cap\PP$. This and the fact that $\Omega_*$ and $\PP$ are open readily implies that $2B_*\cap\partial\Omega_* =2B_*\cap\partial\PP$. Moreover, $X_{\Delta_*}\in\PP$ (since $X\in B_*\cap\Omega_*$)
and 
\begin{equation}\label{eq5.13}
\dist(X_{\Delta_*},\partial\PP) \approx \delta_*(X_{\Delta_*}) \approx r_*\leq \frac{\tau}{8} \ell(I^i) \leq
\frac{\tau}{8}\diam(\PP)
\le 
\,\diam(\PP).
\end{equation}
Let $X_{\PP}$ be a Corkscrew point for the domain $\PP$, at the scale $ \ell(I^i) \approx\diam(\PP)$, i.e.,
$X_\PP$ is a Corkscrew point in $\PP$ relative to the surface ball consisting of the entire
boundary of $\PP$.  Thus in particular, $\dist(X_\PP,\partial\PP)\approx \diam(\PP)\ge \frac8{\tau} r_*$, hence $\dist(X_\PP,\partial\PP)\ge 2\,c_0\,r_*$ for some uniform $0<c_0<1/4$.  

Set $u_1(\cdot):=G_*(X_{\PP}, \cdot)$ and $u_2(\cdot):=G_{\PP}(X_{\PP},\cdot)$ in $2B_*\cap\Omega_* =2B_*\cap\PP$ where $G_*$ and $G_\PP$ are the Green functions for the operator $L$ and for the domains $\Omega_*$ and $\PP$ respectively, and where as observed above $X_\PP\in\PP\subset\Omega_*$. Fix $y\in\frac32B_*\cap\partial\Omega_*=\frac32B_*\cap\partial\PP$ and  note that $B(y,c_0 r_*)\subset 2B_*$. Note that if $Z\in B(y,c_0 r_*)$ then 
\[
2\,c_0r_*\le \dist(X_\PP,\partial\PP)
\le
|X_\PP-y|
\le
|X_\PP-Z|+|Z-y|
<
|X_\PP-Z|+c_0 r_*,
\]
and $|X_\PP-Z|> c_0 r_*$. As a consequence, $B(y,c_0 r_*)\subset 2B_*\setminus B(X_{\PP}, c_0 r_*)$. Hence, $L u_1=0$ and $L u_2=0$ in we weak sense in $B(y,c_0 r_*)\cap\Omega_* =B(y,c_0 r_*)\cap\PP$ and both are continuous in $B(y,c_0 r_*)\cap\overline{\Omega_*} =B(y,c_0 r_*)\cap\overline{\PP}$. In particular both vanish continuously in 
$B(y,c_0 r_*)\cap\partial\Omega_* =B(y,c_0 r_*)\cap\partial\PP$. This means that we can  use Lemma \ref{lm:comp} in $\mathcal{D}=\PP$ to obtain that for every $Z\in B(y, c_0\,r_*/8)$
\begin{equation}\label{comp-u1-u2}
\frac{u_1(Z)}{u_2(Z)}
\approx
\frac{u_1(X_{\Delta_\PP(y,c_0 r_*/2)}^\PP)}{u_2(X_{\Delta_\PP(y,c_0 r_*/2)}^\PP)}
\end{equation}
where $X_{\Delta_\PP(y,c_0 r_*/2)}^\PP$ is a corkscrew relative to $B(y,c_0 r_*/2)\cap\overline{\PP}$ for the fundamental chord-arc domain $\PP$. On the other hand Lemma \ref{lm:CFMS} applied in $\Omega_*$ (which is uniform with Ahlfors regular boundary and the implicit constants are uniformly controlled, see Lemma \ref{lemma:sawtooth-inherit}) and $\PP$ (a fundamental chord-arc domain) gives for any $0<s\le c_0 r_*/2$
\begin{equation}\label{CFMS-u1-u2}
u_1(X_{\Delta_*(y,s)}^*)\approx \omega_*^{X_{\PP}}(\Delta_*(y,s))\,s^{n-2},
\qquad 
u_2(X_{\Delta_\PP(y,s)}^\PP)\approx \omega_\PP^{X_{\PP}}(\Delta_\PP(y,s))\,s^{n-2},
\end{equation}
where $X_{\Delta_*(y,s)}^*$ is the corkscrew point relative to  $\Delta_*(y,s)= B(y,s)\cap\pom_*$ for the uniform domain $\Omega_*$,
$X_{\Delta_\PP(y,s)}^\PP$ is the corkscrew point relative to  $\Delta_\PP(y,s)= B(y,s)\cap\partial\PP$ for the fundamental chord-arc uniform domain $\PP$, and $\omega_\PP$ stands for the elliptic measure associated with the operator $L$ relative to $\PP$. Note that from the definition of corkscrew condition and the fact that $B(y,s)\cap\Omega_*=B(y,s)\cap\PP$ it follows that $X_{\Delta_*(y,s)}^*, X_{\Delta_\PP(y,s)}^*\in B(y,s)\cap\Omega_*=B(y,s)\cap\PP$ and also 
\[
\dist(X_{\Delta_*(y,s)}^*, \pom_*)\approx \dist(X_{\Delta_*(y,s)}^*, \partial\PP)\approx \dist(X_{\Delta_\PP(y,s)}^\PP, \pom_*)\approx
\dist(X_{\Delta_\PP(y,s)}^\PP, \partial\PP)\approx s.
\]
Consequently $u_1(X_{\Delta_*(y,s)}^*)\approx u_1(X_{\Delta_\PP(y,s)}^\PP)$ and $u_1(X_{\Delta_\PP(y,c_0 r_*/2)}^\PP)\approx u_1(X_{\Delta_*(y,c_0 r_*/2)}^*)$.
All these, together with \eqref{comp-u1-u2}, \eqref{CFMS-u1-u2}, and Lemma \ref{lm:Bourgain}, give for every  $0<s\le c_0 r_*/8$
\begin{multline}\label{comp-w*-wPP}
\frac{\omega_*^{X_{\PP}}(\Delta_*(y,s))}{\omega_\PP^{X_{\PP}}(\Delta_\PP(y,s))}
\approx
\frac{u_1(X_{\Delta_*(y,s)}^*)}{u_2(X_{\Delta_\PP(y,s)}^\PP)}
\approx
\frac{u_1(X_{\Delta_\PP(y,s)}^\PP)}{u_2(X_{\Delta_\PP(y,s)}^\PP)}
\approx
\frac{u_1(X_{\Delta_\PP(y,c_0 r_*/2)}^\PP)}{u_2(X_{\Delta_\PP(y,c_0 r_*/2)}^\PP))}
\\
\approx
\frac{u_1(X_{\Delta_*(y,c_0 r_*/2)}^*)}{u_2(X_{\Delta_\PP(y,c_0 r_*/2)}^\PP))}
\approx
\frac{\omega_*^{X_{\Delta_*(y,c_0 r_*/2)}}(\Delta_*(y,c_0 r_*/2))}{\omega_\PP^{X_{\Delta_\PP(y,c_0 r_*/2)}}(\Delta_\PP(y,c_0r_*/2))}
\approx 1
.
\end{multline}
With this in hand, we note that since $y\in\Delta_*(x_*,\frac32r_*)=\frac32B_*\cap\partial\Omega_*=\frac32B_*\cap\partial\PP=\ \Delta_\PP(x_*,\frac32r_*)$ and $0<s\le c_0 r_*/8$ are arbitrary we can easily conclude, using a Vitali covering argument and the fact that both $\omega_*^{X_{\PP}}$ and $\omega_\PP^{X_{\PP}}$ are outer regular  and doubling in $\Delta_*(x_*,\frac32r_*)=\Delta_\PP(x_*,\frac32r_*)$, that $\omega_*^{X_{\PP}}(F)\approx \omega_\PP^{X_{\PP}}(F)$ for any Borel set $F\subset \Delta_*(x_*,\frac32r_*)=\Delta_\PP(x_*,\frac32r_*)$. Hence $\omega_*^{X_{\PP}}\ll \omega_\PP^{X_{\PP}} \ll\omega_*^{X_{\PP}}$ in $\Delta_*(x_*,\frac32r_*)=\Delta_\PP(x_*,\frac32r_*)$. From hypothesis  \ref{2-thm:trsf} in Theorem \ref{thm:trsf} we know that  $\omega_\PP\ll \sigma_\PP:=\HH^{n-1}|_{\partial\PP}$, hence in particular  $\omega_*\ll\sigma_*$ in $\Delta_*(x_*,\frac32r_*)$. This, \eqref{comp-w*-wPP}, and Lebesgue's differentiation theorem readily imply that 
\begin{equation}\label{tmp1}
	\textbf{k}_*^{X_{\PP}} (y) \approx \textbf{k}_{\PP}^{X_{\PP}}(y), \quad \text{for $\HH^{n-1}$-almost all } y\in \Delta_*(x_*,r_*)=\Delta_\PP(x_*,r_*),
\end{equation} 
where $\textbf{k}_\PP := d\hm_\PP/d\sigma_\PP$ and $\textbf{k}_* := d\hm_*/d\sigma_\PP$. 

We next observe that Lemma \ref{lm:cop} applied with $\mathcal{D}=\Omega_*$ (along with Harnack's inequality  for the case $r_*\approx \ell(I^i)$) and Lebesgue's differentiation theorem  yield 
	\begin{equation}\label{tmp2}
		\textbf{k}_*^{X_{\Delta_\ast}}(y) \approx \frac1{\omega_*^{X_\PP}(\Delta_*)} \, \textbf{k}_*^{X_{\PP}}(y)\quad \text{for $\sigma_*$-almost all } y\in \Delta_*. 
	\end{equation} 
Since $\PP$ is a fundamental chord-arc subdomain $\PP$ of $\Omega$, see \eqref{fundamental-casd}, as observed above $\omega_\PP$ belongs to the reverse Hölder class 
with exponent $p_2>1$ and so with exponent $p= \min \{p_1, p_2\}$. We find that since ${\sigma_\ast} ={\sigma_\PP}$ in $\Delta_\ast=\Delta_*(x_*,r_*)=\Delta_{\PP}(x_*,r_*)$
\begin{multline*}%\label{tt-1}
		\int_{\Delta_*} \left( \textbf{k}_*^{X_{\Delta_\ast}} \right)^p d\sigma_*
		\lesssim
		\frac{1}{\big(\omega_*^{X_\PP}(\Delta_*)\big)^{p}} \,
		\int_{\Delta_*} \left( \textbf{k}_*^{X_\PP} \right)^p d\sigma_*
		\approx
		\frac{\sigma_\PP(\Delta_\PP(x_*,r_*))}{\big(\omega_*^{X_\PP}(\Delta_*)\big)^{p}} \,
		\fint_{\Delta_\PP(x_*,r_*)} \left( \textbf{k}_\PP^{X_\PP} \right)^p d\sigma_\PP\\
\lesssim 
\frac{\sigma_\PP(\Delta_\PP(x_*,r_*))}{\big(\omega_*^{X_\PP}(\Delta_*)\big)^{p}} 
\left(\frac{\omega_\PP^{X_\PP}(\Delta_\PP(x_*,r_*))}{\sigma_\PP(\Delta_\PP(x_*,r_*))}\right)^p
		\approx\, \sigma_*(\Delta_*)^{1-p}\,,
	\end{multline*}
where we have used \eqref{tmp2}, \eqref{tmp1}, that ${\sigma_\ast} ={\sigma_\PP}$,  the reverse Hölder estimate with exponent $p$ for $\textbf{k}_\PP$, and that both $\pom_*$ and $\partial\PP$ are Ahlfors regular sets with uniform bounds.

To complete our proof we need to see that $\omega_*\ll\sigma_*$ in $\pom_*$. Let us observe that we have already obtained that $\omega_*\ll\sigma_*$ in $\Delta_*(x_*,\frac32r_*)$ where $x_*\in \pom_*$ is arbitrary and $r_*\le\frac{\tau}{8}\ell(I^i)$ for some $i\in \mathcal{N}_{\Delta_*}$. We may cover $\pom_*$ by a finite union of surface balls $\Delta_*(x_j, r_j)$, with $r_j=\frac{2^{-N}}{\tilde{M}}\ell(Q)$, where $\widetilde{M}$ is large enough to be chosen, whose cardinality may depend on $N$ and $\widetilde{M}$. Note that for every $i\in \mathcal{N}_*$ we have, as observed before, that $\ell(I^i)\gtrsim 2^{-N}\ell(Q)> \frac{8}{\tau} \frac{2^{-N}}{\tilde{M}}\ell(Q)$ if we pick $\widetilde{M}$ large enough. Hence, for every $j$, it follows that $r_j<\frac{\tau}{8}\ell(I^i)$ for every $i\in\mathcal{N}_*$ and in particular for every $i\in \mathcal{N}_{\Delta_*(x_j,r_j)}$. Hence the previous argument yields that
$\omega_*\ll\sigma_*$ in $\Delta_*(x_j,\frac32r_j)$ for every $j$ and consequently $\omega_*\ll\sigma_*$ in $\pom_*$. 
\end{proof}

\begin{remark}
	We would like to emphasize that the fact that  $\omega_*\ll\sigma_*$ in $\pom_*$ is automatic for the Kenig-Pipher operators. In fact as observed above $\Omega_*$ is a chord-arc domain and hence $\omega_*\in A_\infty(\sigma_*)$ (albeit with constants which may depend on $N$). The previous argument proves that the more general hypothesis  \ref{2-thm:trsf} in Theorem \ref{thm:trsf} also yields $\omega_*\ll\sigma_*$ in $\pom_*$.
	 \end{remark}

Once the \textbf{Base case} has been established we can focus on proving the $A_\infty$ property for the sawtooth. With this goal in mind we fix a surface ball $\Delta_* = B_*\cap  \pO_* \subset \Omega$, with $B_*=B(x_*,r_*)$, $x_*\in\pO_*$ and $0<r_*<\diam(\pom_*)$. Let $X:=X_{\Delta_*}\subset B_*\cap\Omega_*$ be a Corkscrew point relative to $\Delta_*$ in $\Omega_*$, so that
$\delta_*(X) \approx r_*$. Our goal is to show \eqref{RH}. As explained above we may assume that $r_* \le M_1^{-1} \ell(Q)$, for some $M_1$ large enough to be chosen. The \textbf{Base case} (Lemma \ref{lemma:base-case})
yields \eqref{RH} when $r_* <\frac{\tau}{8} \ell(I^i)$ for some $i\in \N_{\Delta_*}$. Hence we may assume from now on that $r_* \geq \frac{\tau}{8} \ell(I^i)$ for every $i\in \N_{\Delta_*}$.

\noindent\textbf{Step 1.} Show that 
\begin{align}\label{eq:tosumN}
\int_{\Delta_*} \left(\textbf{k}_*^X \right)^p d\sigma_* \lesssim \sum_{i\in \N_{\Delta_*}} \int_{Q^i} \left(\textbf{k}^X \right)^p d\sigma,
\end{align}
where we recall that $Q^i\in\dd_{\F,Q}$ is so that $I^i\in\W_{Q^i}$ for every $i\in \mathcal{N}_{*}$, and where $\textbf{k}=d\omega_L/d\sigma$.

To see this, by \eqref{eq:cover}, it suffices to obtain 
	\begin{equation}\label{eq:Si} 
\int_{S_*^i} \left( \textbf{k}_*^X \right)^p d\sigma_* 
\lesssim \int_{Q^i} \left(\textbf{k}^X \right)^p d\sigma.
\end{equation}
for each $i\in \N_{\Delta_*}$. Fix then such an $i$ and cover $S_*^i$ by a uniformly bounded number of
surface balls centered at $\pom_*$ with small radius $\Delta_*^{i,l}=B_*^{i,l}\cap \pO_*$ where $S_*^i \cap \Delta_*^{i,l} \neq \emptyset$ and $r(\Delta_*^{i,l}) \approx c \diam(S_*^i)\approx c\,\ell(I^i) $, the constant $c$ is chosen sufficiently small 
(depending on $\tau$), so that $r(\Delta_*^{i,l})\ll (\tau/8)\ell(I^i) $. Hence in the present scenario,
\begin{equation}\label{eq5.20}
\delta_*(X)\approx r_* \gg r(\Delta_*^{i,l})\,.
\end{equation}
We further choose $c$ small enough so that
\begin{equation}\label{eq5.21}
		2\Delta_*^{i,l} \subset \bigcup_{\substack{i'\in \N_* \\ I^{i'} \cap I^i\neq \varnothing} } S_*^{i'}\quad 
 {\rm and}
	\quad 2B^{i,l}_*\cap\Omega_* \subset 
	\bigcup_{\substack{i'\in \N_* \\ I^{i'} \cap I^i \neq \varnothing} } (I^{i'})^*.
	\end{equation} 
Note that there are at most a uniformly bounded number of such $i'$, for each $l$.  In each $\Delta_*^{i,l}$ we can use the \textbf{Base Case}, Lemma \ref{lemma:base-case}, since by construction $r(\Delta_*^{i,l})\ll (\tau/8)\ell(I^i) $ and hence \eqref{conc-base-case} implies that
\begin{equation}\label{use-bc}
\int_{\Delta_*^{i,l}} \Big( \textbf{k}_*^{X_{\Delta_*^{i,l}}} \Big)^p d\sigma_*
\lesssim
\sigma_*(\Delta_*^{i,l})^{1-p}.
\end{equation}
where $X_{\Delta_*^{i,l}}$ is a corkscrew point relative to $\Delta_*^{i,l}$ in $\Omega_*$. Using Lemma \ref{lm:cop} applied with $\mathcal{D}=\Omega_*$ and Lebesgue's differentiation theorem  we have that $\textbf{k}_*^{X}(y)\approx \omega_*^X(\Delta_*^{i,l})\,\textbf{k}_*^{X_{\Delta_*^{i,l}}}(y)$ for $\sigma_*$-a.e. $y\in \Delta_*^{i,l}$. As a result, using \eqref{use-bc}
\begin{multline}
\int_{\Delta_*^{i,l}} \big( \textbf{k}_*^X \big)^p d\sigma_*
\approx
\big(\omega^X_*(\Delta_*^{i,l})\big)^p
\int_{\Delta_*^{i,l}} \Big( \textbf{k}_*^{X_{\Delta_*^{i,l}}} \Big)^p d\sigma_*
\lesssim
\big(\omega^X_*(\Delta_*^{i,l})\big)^p\,\sigma_*(\Delta_*^{i,l})^{1-p}
\\
=\sigma_*(\Delta_*^{i,l}) \left( \frac{\omega_*^X(\Delta_*^{i,l})}{\sigma_*(\Delta_*^{i,l}) } \right)^p \lesssim
\sigma_*(\Delta_*^i) \left( \frac{\omega_*^X(\Delta_*^{i})}{\sigma_*(\Delta_*^{i}) } \right)^p \label{eq:KP},
\end{multline}
where we used the Ahlfors regularity of $\sigma_\ast$ and the doubling properties of $\omega_\ast$.
We claim that  
	\begin{equation}\label{eq:cohm}
\frac{\omega_*^X(\Delta_*^{i})}{\sigma_*(\Delta_*^{i}) } \lesssim \frac{ \omega^X(Q^i) }{\sigma(Q^i)}.
\end{equation}
To see this write $u_1(Y)=\omega^Y_*(\Delta_*^{i} )$ and $u_2(Y)=\omega^Y(Q^i)$ for every $Y\in \Omega_*$ and note that $L u_1=L u_2=0$ in $\Omega_*\subset\Omega$. For $Y\in \Delta_*^{i}\subset \overline{\Omega_*}\subset \Omega$ we have $u_2(Y)\gtrsim 1$ by Lemma \ref{lm:Bourgain} applied in $\mathcal{D}=\Omega$, Harnack's inequality, \eqref{eq:sizeSi}, and \eqref{def:WQ}. Thus the maximum principle applied in the bounded open set $\Omega_*$ yields that $u_1(Y)\lesssim u_2(Y)$ for every $Y\in\Omega_*$, hence in particular for $Y=X$. This and the fact that $\pom$ and $\pom_*$ are Ahlfors regular (see Lemma \ref{lemma:sawtooth-inherit}) give  at desired \eqref{eq:cohm}.

Combining \eqref{eq:KP} and \eqref{eq:cohm}, and using Hölder's inequality and Ahlfors regularity of $\sigma, \sigma_*$, we get
	\begin{align}
		\int_{\Delta_*^{i,l}} \left( \textbf{k}_*^X \right)^p d\sigma_* \lesssim \sigma(Q^i) \left( \frac{ \omega^X(Q^i) }{\sigma(Q^i)} \right)^p \lesssim \int_{Q^i} \left(\textbf{k}^X \right)^p d\sigma.
	\end{align}
We recall that $S_*^i$ is covered by a uniformly bounded number of surface balls $\Delta_*^{i,l}$. Thus summing in $l$ we conclude \eqref{eq:Si} as desired. This completes \textbf{Step 1}.

\noindent\textbf{Step 2.} Study the interaction of the elements of the family $\{Q^i: i\in \N_{\Delta_*}\}$. 
	
We first note that	for every $i\in \N_{\Delta_*}$
\begin{equation}\label{eq5.24}
 \dist(\Delta_*, Q) \leq \dist(S_*^i\cap \Delta_*,Q) \lesssim \ell(Q^i)\approx \ell(I^i)\lesssim r_* 
\end{equation}
Pick $\hat x\in \overline Q$ such that $\dist(\hat x, \Delta_*) = \dist(Q, \Delta_*)$. If $\hat{x}\in \overline Q \setminus Q$, we 
	replace it by a point, which we call again $\hat x$, belonging to $B(\hat x, r_*/2) \cap Q$, so that $\hat x \in Q$ and $\dist(\hat x, \Delta_*) \lesssim r_*$. We claim that 
	there is a large constant $C>1$ such that $Q^i \subset \Delta_1$ where $\Delta_1:=B(\hat x, Cr_*)\cap\pom$. Indeed if $y\in Q^i$ then
	\[
	|y-\hat x|\le \diam(Q^i)+\dist(Q^i, I^i)+\diam(I^i)+|y^i-\hat x|\lesssim r_*,
	\]
	where we have picked $y^i \in S_*^i \cap \Delta_*$ for each $i\in\mathcal{N}_{\Delta_*}$.

Consider next the covering 
$\Delta_1\subset \cup_{k=1}^{N_1} P_k$, where $N_1$ depends on Ahlfors regularity and dimension, and $\{P_k\}_{k=1}^{N_1}$ is a pairwise disjoint collection of dyadic
	cubes on $\pO$, of the same generation, with length $\ell(P_k)\approx r_*$.  Since in the present scenario,
$\ell(Q^i) \lesssim r_*$,
we may further suppose that $\ell(P_k) \geq\ell(Q^i)$ for every $i$.   Moreover, since we have assumed that  $r_* \leq M_1^{-1} \ell(Q)$, taking $M_1$ large enough we may assume that $\ell(P_k)\le \ell(Q)$ for every $1\le k\le N_1$. 

Note that 
\[ \bigcup_{i\in \N_{\Delta_*}} Q^i \subset \Delta_1 \subset \bigcup_{k=1}^{N_1} P_k.\]
By relabeling if needed, we may assume that there exists $N_2$,  $1\le N_2\le N_1$, such that $P_k$ meets some $Q^i$, $i\in \N_{\Delta_*}$, for each $1\le k\le N_2$. 
Hence $\bigcup_{i\in \N_{\Delta_*}} Q^i \subset \bigcup_{k=2}^{N_2} P_k$ and, necessarily, $Q^i\subset P_k\subset Q$, and   since $Q^i \in \DD_{\F,Q}$, it follows that $P_k\in \DD_{\F,Q}$ for $1\le k\le N_2$.

For future reference, we record the following observation.  Recall that $X$ is a Corkscrew point relative to
$\Delta_*=B_*\cap\pO_*$, for the domain $\Omega_*$; i.e.,  $X\in B_*\cap\Omega_*$, with
$\delta_*(X) \approx r_*$.  By \eqref{eq5.24} and for every $1\le k\le N_2$ if we pick some $i$ so that $Q^i\subset P_k$ we have
\[
r_*
\approx 
\delta_*(X) 
\le
\delta(X) 
\le  
\dist(X,P_k)
\le 
\dist(X, Q^i)
\le
|X-x_*|+2\,r_*+\dist(\Delta_*, Q_i)
\lesssim
r_*
\approx \ell(P_k).
\]
Recalling that $X_{P_k}$ denotes a corkscrew point relative to the dyadic cube $P_k$ we then have that $\delta(X)\approx \ell(P_k)\approx \delta(X_{P_k})$ and also $|X-X_{P_k}|\lesssim \ell(P_k)$, hence by Harnack's inequality $\omega^X\approx\omega^{X_{P_k}}$ and eventually $\textbf{k}^X\approx \textbf{k}^{X_{P_k}}$, $\sigma$-a.e. in $\pom$. On the other hand, we have already mentioned that hypothesis \ref{1-thm:trsf} in Theorem \ref{thm:trsf} says that $\omega\in RH_{p_1}(\sigma)$, which clearly implies $\omega\in RH_{p}(\sigma)$ since $p\le p_1$. Note that this reverse Hölder condition is written for surface balls, but it is straightforward to see, using Lemmas \ref{lm:ddAR} and \ref{lm:doubling}, that the same reverse Hölder estimates hold for any dyadic cube. All these,  and the fact that both $\pom$ and $\pom_*$ are Ahlfors regular (see Lemma \ref{lemma:sawtooth-inherit}) lead to
\begin{equation}\label{eq5.25}
\int_{P_k} \Big(\textbf{k}^{X_{P_k}}\Big)^{p} d\sigma
\lesssim 
\sigma(P_k)\bigg(\frac{\omega^{{X_{P_k}}}(P_k)}{\sigma(P_k)}\bigg)^p
\le 
\sigma(P_k)^{1-p} \approx \sigma_*(\Delta_*)^{1-p}\,,
 \end{equation}
for each $k$, with uniform implicit constants.

As mentioned above, for every $i\in\N_*$, there exists $J^i\in\W$ so that $I^i\cap J^i\neq\emptyset$ and so that if we pick $\widetilde{Q}^i\in\dd$ with $\ell(\widetilde{Q}^i)=\ell(J^i)$ and $\dist(J,\pom)=\dist(J^i, \widetilde{Q}^i)$ then $\widetilde{Q}^i\notin \DD_{\F,Q}$.  In particular 
	\begin{equation}\label{bdQngh}
		\ell(\widetilde{Q}^i) \approx \ell(Q^i)\quad \text{ and }\quad \dist(\widetilde{Q}^i, Q^i) \lesssim \ell(Q^i). 
	\end{equation}  
By the definition of $\DD_{\F,Q}$, $\widetilde{Q}^i \notin \DD_{\F,Q}$ means either $\widetilde{Q}^i \subset \pom\setminus Q$, or $\widetilde{Q}^i \subset Q_j $, for some
	$Q_j\in \F$. Given $1 \le k\le N_2$, for each $i\in \N_{\Delta_*}$, we say $i\in \N_0(k)$, 
if the first case happens, with $Q^i\subset P_k$; 
and if the second case happens with $Q_j\in \F$, and with $Q^i\subset P_k$,
we say $i\in \N_j(k)$.
	For the second case we remark that
	\begin{equation}\label{eq:Qjclose}
		\dist(Q_j, P_k) \leq \dist(\widetilde{Q}^i, Q^i) \lesssim \ell(Q^i) \leq \ell(P_k).
	\end{equation}
For each $k$, $1\le k\le N_2$, we set
\[
\F_1(k):=\{Q_j\in\F: \exists\, i\in\mathcal{N}_j(k),\ \ell(Q_j) \geq \ell(P_k)\}
\]
and
\[
\F_2(k):=\{Q_j\in\F: \exists\, i\in\mathcal{N}_j(k),\ \ell(Q_j) <\ell(P_k)\}.
\]

With the previous notation, \eqref{eq:tosumN}, and the fact that $\bigcup_{i\in \N_{\Delta_*}} Q^i \subset \bigcup_{k=2}^{N_2} P_k$
we obtain
\begin{align}\label{eq:tosumN:cont}
\int_{\Delta_*} \left(\textbf{k}_*^X \right)^p d\sigma_* 
&\lesssim \sum_{i\in \N_{\Delta_*}} \int_{Q^i} \left(\textbf{k}^X \right)^p d\sigma
\\
&\le
\sum_{k=1}^{N_2} \Bigg(\sum_{i\in \N_0(k)}\int_{Q^i} \left(\textbf{k}^X \right)^p d\sigma+\sum_{Q_j\in\F}\sum_{i\in \N_j(k)}\int_{Q^i} \left(\textbf{k}^X \right)^p d\sigma
\bigg)
\nonumber \\
&\lesssim
\sum_{k=1}^{N_2} \Bigg(\sum_{i\in \N_0(k)}\int_{Q^i} \left(\textbf{k}^{X_{P_k}} \right)^p d\sigma+\sum_{Q_j\in\F_1(k)}\sum_{i\in \N_j(k)}\int_{Q^i} \left(\textbf{k}^{X_{P_k}} 
\right)^p d\sigma 
\nonumber \\
&\hskip3cm
+ \sum_{Q_j\in\F_2(k)}\sum_{i\in \N_j(k)}\int_{Q^i} \left(\textbf{k}^{X_{P_k}} 
\right)^p d\sigma\bigg)
\nonumber ,
\end{align}
where we have used that $\textbf{k}^X\approx \textbf{k}^{X_{P_k}}$, $\sigma$-a.e. in $\pom$. 
	
	At this stage we need the following lemma. We defer its proof until later.
	\begin{lemma}\label{lm:thinbd} 
		Let $\mathcal{D}$ be an open set with Ahlfors regular boundary and write $\sigma=\HH^{n-1}|_{\partial\mathcal{D}}$.
		Let $Q\in \dd=\DD(\partial\mathcal{D})$ and suppose that $\DD'\subset\DD$ is such that each 
$Q'\in \DD'$ satisfies one of the following conditions for some $C_1\ge 1$:
		\begin{itemize}\itemsep=0.2cm
			\item $Q' \subset Q$ and $\dist(Q',\pom\setminus Q) \leq C_1 \ell(Q')$.
			\item $Q' \cap Q = \emptyset$, $\ell(Q') \leq C_1 \ell(Q)$ and $\dist(Q', Q) \leq C_1 \ell(Q')$.
		\end{itemize} 
Then there is a subcollection of distinct cubes 
$\{\widetilde{Q}_m\}_{m=1}^{N_2}$, all of the same generation, with $N_2 = N_2(n,C_{AR}, C_1)$, 
satisfying $\ell(Q)\le \ell(\widetilde Q_m) \le C_2 \ell(Q)$ and $\dist(\widetilde{Q}_m,Q) \le  C_2\ell(Q)$, 
with $C_2 = C_2(n,C_{AR}, C_1)$, for every $m$, such that for any $s>1$ if $0\le h\in L_{\rm loc}^s(\partial\mathcal{D},\sigma)$ then 
		\begin{equation}\label{ccl:thinbd}
			\sum_{Q' \in \DD'} \int_{Q'} h d\sigma \le C_3 \sigma(Q) 
\sum_{m=1}^{N_2} \left( \fint_{ \widetilde Q_m} h^s d\sigma \right)^{\frac{1}{s}}
		\end{equation}
		where $C_3=C_3(n,C_{AR}, C_1,s)$.

As a consequence, if there exists $C_1'$ so that for each $m$, $1\le m\le N_2$, there holds
\begin{equation}\label{RH-h}
\left( \fint_{ \widetilde Q_m} h^s d\sigma \right)^{\frac{1}{s}}\le C_1' \fint_{ \widetilde Q_m} h\, d\sigma
\end{equation}
then
\begin{equation}\label{ccl:thinbd:Ch}
\sum_{Q' \in \DD'} \int_{Q'} h d\sigma \le C_3' 
\sum_{m=1}^{N_2} \int_{\widetilde Q_m } h d\sigma 
\end{equation}
with $C_3'=C_3'(n,C_{AR}, C_1,s, C_1')$.
\end{lemma}

	\begin{remark} 
It follows from the proof of that if $Q'\subset Q$ for all $Q'\in \DD'$ (i.e., we only consider the first case), then 
there is only one $\widetilde{Q}_m$, namely the unique one containing $Q$ satisfying the given conditions. 	
	\end{remark}

\begin{remark}\label{rem:daydic-lemma:omega}
Suppose that we are under the assumptions of the previous result. Assume further that $\mathcal{D}$ is a uniform domain with Ahlfors regular boundary and that $\omega_L\in RH_p(\sigma)$. Then, if $\textbf{k}_L=d\omega_L/d\sigma$ it follows that 
\begin{equation}
\label{ccl:thinbd2}
\sum_{Q' \in \DD'} \int_{Q'} \left(\textbf{k}_L^{X_Q}\right)^p d\sigma \lesssim \int_{Q} \left( \textbf{k}^{X_Q} \right)^p d\sigma.
\end{equation}
with an implicit constant depending on the allowable constants of $\mathcal{D}$, $C_1$, $p$, and the implicit constant in the condition  $\omega_L\in RH_p(\sigma)$.

To see this we recall that from Gehring's Lemma it follows that there exists $s>1$ such that  $\omega_L\in RH_{ps}(\sigma)$. This, combined with Harnack's inequality, implies that \eqref{RH-h} holds with $h=\big(\textbf{k}_L^{X_Q}\big)^p$. As a result \eqref{ccl:thinbd:Ch} readily gives \eqref{ccl:thinbd2}: 
\begin{multline*}
\sum_{Q' \in \DD'} \int_{Q'}\left(\textbf{k}_L^{X_Q}\right)^pd\sigma \lesssim 
\sum_{m=1}^{N_2} \int_{\widetilde Q_m } \left(\textbf{k}_L^{X_Q}\right)^p d\sigma 
\approx
\sum_{m=1}^{N_2} \int_{\widetilde Q_m } \left(\textbf{k}_L^{X_{\widetilde Q_m}}\right)^p d\sigma 
\\
\lesssim
\sum_{m=1}^{N_2} \sigma(\widetilde Q_m)^{1-p}
\lesssim
\sigma(Q)^{1-p},
\end{multline*}		
where we have used Harnack's inequality (to change the pole of the elliptic measure from $X_Q$ to $X_{\widetilde Q_m}$ and the fact that $N_2$ is uniformly bounded). 
\end{remark}
	
We will use the previous remark to estimate \eqref{eq:tosumN:cont}. Fixed then $1\le k\le N_2$ and we split the proof in three different steps.

\noindent\textbf{Step 2.1.} Estimate for $\N_0(k)$. 

If $i\in \N_0(k)$ we have $\widetilde{Q}^i \subset \pom\setminus Q \subset \pom\setminus P_k$ and 
	\[ \dist(Q^i, \pom\setminus P_k) \leq \dist(Q^i, \widetilde{Q}^i) \lesssim \ell(Q^i). \]
	Since $Q^i \subset P_k$,  we may apply Lemma \ref{lm:thinbd} to $P_k$ 
	and the collection $\DD' := \{Q^i: i\in \N_0(k)\}$ (note that we are in the first scenario), to obtain by Remark \ref{rem:daydic-lemma:omega}
\begin{equation}\label{eq:N0}
\sum_{i\in \N_0(k)} 
\int_{Q^i} \left(\textbf{k}^{X_{P_k}}\right)^p d\sigma
\lesssim 
\int_{P_k} \left(\textbf{k}^{X_{P_k}} \right)^p d\sigma  
\lesssim \sigma_*(\Delta_*)^{1-p},
	\end{equation}
 where in the last inequality we have used \eqref{eq5.25}.

\noindent\textbf{Step 2.2.} Estimate for $Q_j\in \F_1(k)$.

	By \eqref{eq:Qjclose}, the cardinality of $\F_1(k)$ is uniformly bounded. Moreover, for each $Q_j\in \F_1(k)$ we necessarily have $Q_j \cap P_k = \emptyset$, since otherwise, the 
condition $\ell(Q_j) \geq \ell(P_k)$ 
guarantees that $P_k\subset Q_j$, and thus $Q^i \subset P_k \subset Q_j\in\F$. This 
contradicts that $Q^i \in \DD_{\F,Q}$. On the other hand $Q_j \cap P_k = \emptyset$ implies 
$\widetilde{Q}^i\subset Q_j\subset \pom\setminus P_k $, for each $i\in \N_j(k)$. 
Combined with \eqref{bdQngh}, this yields
	\[ \dist(Q^i, \pom\setminus P_k) \leq \dist(Q^i, \widetilde{Q}^i) \lesssim \ell(Q^i). \]

	Applying Lemma \ref{lm:thinbd} to $P_k$
	and the collection $\DD' = \{Q^i: i\in \N_j(k)\} $ (note that we are in the first scenario), we obtain from \eqref{ccl:thinbd2}
\begin{equation}\label{eq:Nj1}
		\sum_{i\in \N_j(k)} \int_{Q^i} \left(\textbf{k}^{X_{P_k}} \right)^p d\sigma \lesssim 
\int_{P_k} \left(\textbf{k}^{X_{P_k}} \right)^p d\sigma 
\lesssim \sigma_*(\Delta_*)^{1-p}, 
	\end{equation}
 where again we have used \eqref{eq5.25}.  
	The above estimate holds for each $Q_j\in \F_1(k)$, which as uniformly bounded cardinality, hence
	\begin{equation}\label{eq:Nj1all}
		\sum_{Q_j\in \F_1(k)} \sum_{i\in \N_j(k)} \int_{Q^i} \left(\textbf{k}^{X_{P_k}}  \right)^p d\sigma  
		\lesssim \sigma_*(\Delta_*)^{1-p}. 
	\end{equation}

\noindent\textbf{Step 2.3.} Estimate for $Q_j\in \F_2(k)$. 

For each $Q_j\in \F_2(k)$ we claim that
	\begin{equation}\label{eq:Nj2}
		\sum_{i\in \N_j(k)} 
		\int_{Q^i} \left(\textbf{k}^{X_{P_k}} \right)^p d\sigma \lesssim \int_{Q_j} \left( \textbf{k}^{X_{P_k}}  \right)^p d\sigma.
	\end{equation}
	In fact, for each $i\in \N_j(k)$, by \eqref{bdQngh} and $\widetilde{Q}^i \subset Q_j$, we have
	\begin{equation}\label{sizeupbd}
		\ell(Q^i) \approx \ell(\widetilde{Q}^i) \leq \ell(Q_j).
	\end{equation}
	Since $Q^i \in \DD_{\F,Q}$, we either have $Q^i \cap Q_j = \emptyset$, or $Q_j \subsetneq Q^i$. 
In the first case, note that
\[
\dist(Q^i,Q_j) \leq \dist(Q^i,\widetilde{Q}^i))\lesssim\ell(Q^i),
\]
hence $Q^i\cup Q_j\subset \Delta(x_{Q_j}, C\,\ell(Q_j))$ which $x_{Q_j}$ being the center of $Q_j$ an a uniform constant $C$. By Lemma \ref{lm:cop} applied with $\mathcal{D}=\Omega$ (or Harnack's inequality if $\ell(Q_j)\approx \ell(P_k)$), Lebesgue's differentiation theorem, Lemma \ref{lm:doubling}, and Harnack's inequality one can see that
\[
\textbf{k}^{X_{P_k}}(y)\approx \omega^{X_{P_k}}(Q_j)\, \textbf{k}^{X_{Q_j}}(y),
\qquad
\text{for $\sigma$-a.e. $y\in  \Delta(x_{Q_j}, C\,\ell(Q_j))$}.
\]
This, Lemma \ref{lm:thinbd} with $Q_j$ and 
the collection $\DD': = \{Q^i: i\in \N_j(k), Q^i \cap Q_j = \emptyset \}$ (we are in the second scenario), and Remark \ref{rem:daydic-lemma:omega} lead to
\begin{multline}\label{eq:Nj2out}
	\sum_{\substack{i\in \N_j(k) \\ Q^i \cap Q_j = \varnothing} } \int_{Q^i} \left(\textbf{k}^{X_{P_k}} \right)^p d\sigma 
	\approx
		\left(\omega^{X_{P_k}}(Q_j)\right)^p \sum_{\substack{i\in \N_j(k) \\ Q^i \cap Q_j = \varnothing} } \int_{Q^i} \left(\textbf{k}^{X_{Q_j}} \right)^p d\sigma 
		\\
		\lesssim \left(\omega^{X_{P_k}}(Q_j) \right)^p\int_{Q_j} \left( \textbf{k}^{X_{Q_j}} \right)^p d\sigma
		\approx
		\int_{Q_j} \left( \textbf{k}^{X_{P_k}} \right)^p d\sigma
	.
\end{multline}

On the other hand, if $Q_j \subsetneq Q^i$, then \eqref{sizeupbd} gives $\ell(Q_j)\approx \ell(Q^i)$, hence the cardinality of $\{Q^i: i\in \N_j(k), Q_j \subsetneq Q^i\}$ is uniformly bounded. On the other hand, by Lemma \ref{lm:cop} applied with $\mathcal{D}=\Omega$ (or Harnack's inequality if $\ell(Q^i)\approx \ell(P_k)$), Lebesgue's differentiation theorem, Lemma \ref{lm:doubling}, and Harnack's inequality we readily obtain 
\[
\textbf{k}^{X_{P_k}}(y)\approx \omega^{X_{P_k}}(Q_j)\, \textbf{k}^{X_{Q_j}}(y),
\qquad
\text{for $\sigma$-a.e. $y\in  Q^i$}.
\]
Thus, using  Corollary \ref{cor:doublingPk} we have
\begin{align}\label{eq:Nj2in}
\sum_{\substack{i\in \N_j(k) \\ Q^i \supsetneq Q_j}} \int_{Q^i} \left(\textbf{k}^{X_{P_k}} \right)^p d\sigma 
& \approx
\left ( \omega^{X_{P_k}}(Q_j)\right)^p\,\sum_{\substack{i\in \N_j(k) \\ Q^i \supsetneq Q_j}}  \int_{Q^i} \left(\textbf{k}^{X_{Q_j}} \right)^p d\sigma 
\\ \nonumber
 &\lesssim
\left ( \omega^{X_{P_k}}(Q_j)\right)^p\, \sum_{\substack{i\in \N_j(k) \\ Q^i \supsetneq Q_j}} \int_{Q_j} \left(\textbf{k}^{X_{Q_j}} \right)^p d\sigma 
\\ \nonumber
 &\lesssim 
\left ( \omega^{X_{P_k}}(Q_j)\right)^p\, \int_{Q_j} \left(\textbf{k}^{X_{Q_j}} \right)^p d\sigma 
\\ \nonumber
 &\approx
 \int_{Q_j} \left(\textbf{k}^{X_{P_k}} \right)^p d\sigma.
\end{align}
The claim \eqref{eq:Nj2} now follows from \eqref{eq:Nj2out} and \eqref{eq:Nj2in}. 
	
To continue, let us recall that for each $Q_j\in \F_2(k)$, 
	\[ \ell(Q_j) < \ell(P_k)\quad \text{ and }\quad  \dist(Q_j,P_k) \lesssim \ell(P_k), \]
	where the second inequality is \eqref{eq:Qjclose}.
Consequently, each $Q_j\in \F_2(k)$, is contained in some $P\in
	{\bf N}(P_k):= \{P\in\dd: \ell(P) =\ell(P_k),\  \dist(P,P_k) \lesssim \ell(P_k)\}$ and, clearly, the cardinality of ${\bf N}(P_k)$ is uniformly bounded.
Recalling that $\F=\{Q_j\}_j$ is a pairwise disjoint family of cubes, 
by \eqref{eq:Nj2}, Corollary \ref{cor:doublingPk}, and \eqref{eq5.25}, we arrive at
\begin{multline}
		\sum_{Q_j\in \F_2(k)} \sum_{i\in \N_j(k)} 
		\int_{Q^i} \left(\textbf{k}^{X_{P_k}} \right)^p d\sigma \lesssim 
		\sum_{Q_j\in \F_2(k)} \int_{Q_j} \left(\textbf{k}^{X_{P_k}} \right)^p d\sigma \\[4pt] = \,
		\int_{\bigcup\limits_{Q_j\in \F_2(k)} Q_j} \left(\textbf{k}^{X_{P_k}} \right)^p d\sigma 
		 \le \sum_{P\in {\bf N}(P_k)}\int_{P} \left(\textbf{k}^{X_{P_k}} \right)^p d\sigma  
		\lesssim \int_{P_k} \left(\textbf{k}^{X_{P_k}} \right)^p d\sigma \label{eq:Nj2all}
\lesssim\sigma_*(\Delta_*)^{1-p}.
	\end{multline}

\noindent\textbf{Step 2.4.} Final estimate.

We finally combine \eqref{eq:tosumN:cont} with \eqref{eq:N0}, \eqref{eq:Nj1all}, and \eqref{eq:Nj2all}, and use the fact that $N_2\le N_1=N_1(n, C_{AR})$  
to conclude that 
	\begin{align}
\int_{\Delta_*} \left(\textbf{k}_*^X \right)^p d\sigma_*
\lesssim
\sum_{k=1}^{N_2} \sigma_*(\Delta_*)^{1-p}
\lesssim \sigma_*(\Delta_*)^{1-p}.
\label{eq:sumallN}
	\end{align} 
Hence, we have obtained the desired estimate \eqref{RH},
and therefore the proof of Theorem \ref{thm:trsf} is complete, 
provided that the sawtooth domain $\Omega_\ast$ is \textbf{compactly contained} in $\Omega$ and modulo the proof of Lemma \ref{lm:thinbd}.

To consider the general case we need the following theorem which generalizes \cite[Theorem 4.1]{KKiPT} and \cite{DJK} (see also \cite{DKP, Zh}):
\begin{theorem}[{\cite[Theorem 1.1]{CHMT}}]\label{thm:CMEAinfty}
	Let $\mathcal{D}$ be uniform domain $\mathcal{D}$ Ahlfors regular boundary, and let $\wcalA$ be a real (non necessarily symmetric) uniformly elliptic matrix on  $\mathcal{D}$. The following are equivalent:
	\begin{enumerate}[label=\textup{(\arabic*)}, itemsep=0.2cm] 	
		\item\label{1-thm:CMEAinfty}  The elliptic measure $\omega_L$ associated with the operator $L=-\divg(\wcalA\nabla)$ is of class $A_\infty$ with respect to the surface measure. 
		
		\item\label{2-thm:CMEAinfty} Any bounded weak solution to $Lu=0$ satisfies the Carleson measure estimate 
		\begin{equation}
		\sup_{\substack{x\in\partial\mathcal{D} \\ 0<r<\infty}} \frac1{r^n}\iint_{B(x,r) \cap \mathcal{D}} |\nabla u(Y)|^2 \dist(Y,\partial\mathcal{D})\, dY \leq C\|u\|_{L^\infty(\mathcal{D})}^2.
		\end{equation}
	\end{enumerate}
The involved constants depend on the allowable constants and the constant appearing in the corresponding hypothesis of the implication in question. 
\end{theorem} 

Consider next a \textbf{general} sawtooth domain $\Omega_* = \Omega_{\F,Q}$ which, although bounded, is not necessarily compactly contained in $\Omega$. By \ref{2-thm:CMEAinfty} $\implies$ \ref{1-thm:CMEAinfty} in Theorem \ref{thm:CMEAinfty} with $\mathcal{D}=\Omega$, in order to obtain that the elliptic measure associated with $L$ relative to $\Omega_*$ belongs to $A_\infty$ with respect to the surface measure, we just need to see that \ref{2-thm:CMEAinfty} holds with $\mathcal{D}=\Omega_*$. With this goal in mind we take $u$, a bounded weak solution to $Lu=0$ in $\Omega_*$, and let $x\in\pom$ and $0<r<\infty$.

Given $N\ge 1$ we recall the definition of $\F_N:=\F(2^{-N}\,\ell(Q))$ in Section \ref{section:sawtooth}  and write $\Omega_*^N := \Omega_{\F_N,Q}$. Note that by construction $\ell(Q')> 2^{-N}\,\ell(Q)$ for every $Q'\in\dd_{\F_N, Q}$, hence $\Omega_*^N$ is compactly contained in $\Omega$ (indeed is at distance of the order $2^{-N}\,\ell(Q)$ to $\pom$). Then we can apply the previous case to obtain that for each $N$, the associated elliptic measure associated with $L$ relative to $\Omega_*^N$ satisfies the $A_\infty$ property with respect to the surface measure of $\partial\Omega_\ast^N$, and the implicit constants depend only on the allowable constants. Hence 
\ref{1-thm:CMEAinfty} $\implies$ \ref{2-thm:CMEAinfty} in Theorem \ref{thm:CMEAinfty} with $\mathcal{D}=\Omega_*^N$ implies
\begin{equation}\label{CME-N}
\sup_{\substack{z\in\pom_*^N \\ 0<s<\infty}} \frac1{s^n}\iint_{B(z,s) \cap \Omega_*^N} |\nabla u(Y)|^2 \delta_*^N(Y)\, dY \lesssim \|u\|_{L^\infty(\Omega_N^*)}^2
\le
C\|u\|_{L^\infty(\Omega_*)}^2,
\end{equation}
since $u$ is a bounded weak solution to $Lu=0$ in $\Omega_*$ and so in each $\Omega_N^*$,  where $\delta_*^N=\dist(\cdot\,,\pom_*^N)$ and where the implicit constants depend only on the allowable constants.

Let $\omega_*$ and $\omega_*^N$ denote the elliptic measures to $L$ relative to $\Omega_*$ and $\Omega_*^N$ respectively, and let $\sigma_*^N := \mathcal{H}^{n-1}|_{\pO_*^N}$ denote the surface measure of $\partial\Omega_\ast^N$. By construction $\{\Omega_*^N\}_{N\ge 1}$ is an increasing sequence of sets with $\Omega_\ast=\cup_{N\ge 1}\Omega_*^N$. Hence, for any $Y\in \Omega_*$ there is $N_Y\ge 1$ such that $Y\in\Omega_\ast^N$ for all $N\ge N_Y$.  Clearly $\delta_*^N(Y) \nearrow \delta_*(Y)$ as $N\to\infty$ and
\[ 
|\nabla u(Y)|^2 \delta_*^N(Y) \chi_{\Omega_*^N}(Y) \nearrow |\nabla u(Y)|^2 \delta_*(Y) \chi_{\Omega_*}(Y), \quad \text{ as } N\to \infty.
\]
On the other hand since $x\in \pO_*$, using the Corkscrew condition we can find a sequence $\{x_N \}_{N\ge 1}$ with $x_N\in \pO_*^N$ such that $x_N \to x$. In particular, $B(x,r) \subset B(x_N,2r)$ for sufficiently large $N$. By Fatou's Lemma and \eqref{CME-N} it then follows
	\begin{multline}\label{tt-2}
		\iint_{B(x,r) \cap \Omega_*} |\nabla u(Y)|^2 \delta_*(Y)\, dY 
		\leq \liminf_{N\to\infty} \iint_{B(x,r) \cap \Omega_*^N} |\nabla u(Y)|^2 \delta_*^N(Y)\, dY 
		\\
			\leq \liminf_{N\to\infty} \iint_{B(x_N,2r) \cap \Omega_*^N} |\nabla u|^2 \delta_*^N(y)\, dY
			\lesssim
			r^n\,
			\|u\|_{L^\infty(\Omega_*)}^2,
		\end{multline}
		where the implicit constant depend only on the allowable constants. Since $x$, $r$, and $u$ are arbitrary we have obtained as desired \ref{2-thm:CMEAinfty} in Theorem \ref{thm:CMEAinfty} for $\mathcal{D}=\Omega_*$ and as a result we conclude that $\omega_*\in A_\infty(\sigma_*)$. This completes the proof for an arbitrary sawtooth domain $\Omega_*$, and therefore the proof of Theorem \ref{thm:trsf} modulo the proof of Lemma \ref{lm:thinbd}.
	\end{proof}

	\begin{proof}[Proof of Lemma \ref{lm:thinbd}]
For fixed $k\in \ZZ$, write $\DD'_k:=\{Q'\in\dd': \ell(Q') = 2^{-k} \ell(Q)\}$, which is a pairwise disjoint family. In the first case since $Q'\subset Q$, we have that  $k\ge 0$; in the second case since $\ell(Q') \leq C_2 \ell(Q)$, we may assume that $k\ge -\log_2 C_2$. Set then $k_0=0$ in the first case and $k_0$ the integer part of $\log_2 C_1$. We define for $k\ge -k_0$
		\[ A_k^+ = \{x\in Q: \dist(x,\pom\setminus Q) \lesssim 2^{-k} \ell(Q) \}, \ \ \quad A_k^- = \{x\in\partial\Omega\setminus Q: \dist(x,Q) \lesssim 2^{-k} \ell(Q) \}, \]
		so that for
		appropriate choices of the implicit constants, 
each $Q'\in \DD'_k$ is contained in either $A_k^+$ (the first case) or $A_k^-$ (the second case).
		Recall that by the thin boundary property of the 
dyadic decomposition $\DD$ (cf. \eqref{thin-boundary}), there is $\gamma \in (0,1)$ such that for all $k$ under consideration,
		\[ 
			\sigma(A_k^+) \lesssim 2^{-k\gamma} \sigma(Q), \quad \sigma(A_k^-) \lesssim 2^{-k\gamma}\sigma(Q). 
		\]
Set 
$$\F_-:= \big\{Q'\subset \pom\setminus Q: \ell(Q') \leq C_1 \ell(Q),\ \dist(Q',Q) \leq C_1 \ell(Q')\big\}.$$
Observe that each $Q'\in \F_-$ is contained in some dyadic cube $\widetilde{Q}$, with
 $\ell(\widetilde Q)\approx \ell(Q)$ and $\dist(\widetilde{Q},Q) \lesssim \ell(Q)$  depending on $C_1$.
We may therefore define a collection of distinct cubes 
$\F_*:= \{\widetilde{Q}_m\}_{m=1}^{N_2}$, all of the same dyadic generation, 
one of which (say, $\widetilde{Q}_1$) contains $Q$, with $\ell(\widetilde Q_m) \approx \ell(Q)$, and with
$\dist(\widetilde{Q}_m,Q) \lesssim \ell(Q)$  for every $m$,
such that each $Q'\in\F_-$ is contained in some $\widetilde{Q}_m\in\F_*$, and 
$$\bigcup_k A_k^+ \,\subset\, Q\,\subset \,\widetilde{Q}_1\,, \quad {\rm and} \quad 
\bigcup_k A_k^- \subset \, \bigcup_{m=2}^N \widetilde{Q}_m.$$
Clearly, we have $\#\F_* = N_2= N_2(n,C_{AR}, C_1)$. Using all the previous observations we get for any $s>1$
\begin{align}\label{tt-100}
		\sum_{Q' \in \DD'} \int_{Q'} h\, d\sigma 
		&=
			 \sum_{k=-k_0}^\infty \sum_{Q' \in \DD'_k} \int_{Q'} h\, d\sigma 
\\
&			\leq  \sum_{k=-k_0}^\infty \int_{A_k^+ \cup A_k^-} h\, d\sigma
			\nonumber\\ 
			& \leq \sum_{k=-k_0}^\infty \sigma(A_k^+ \cup A_k^-)^{1-\frac1s} 
			\left( \int_{\cup_m\widetilde Q_m} h^s d\sigma \right)^{\frac{1}{s}}
			\nonumber\\
			& \lesssim \sum_{k=-k_0}^\infty \left(2^{-k\gamma }\sigma(Q) \right)^{1-\frac{1}{s}} 
	 \left( \int_{\cup_m\widetilde {Q}_m} h^s\, d\sigma \right)^{\frac{1}{s}} 
	\nonumber \\
			& \lesssim \sigma(Q) \left(\sum_m \fint_{\widetilde Q_m} h^s\, d\sigma \right)^{\frac{1}{s}}
	\nonumber\\
		&	\lesssim \sigma(Q) \sum_m 
		\left( \fint_{\widetilde Q_m} h^s d\sigma \right)^{\frac{1}{s}}.\nonumber
		\end{align}
This shows \eqref{ccl:thinbd}. To obtain \eqref{ccl:thinbd:Ch} we combine \eqref{tt-100} together with \eqref{RH-h} and the fact that $\sigma(Q)\approx\sigma(Q_m)$ for every $1\le m\le N_2$ by the Ahlfors regular property and the construction of the family $\F_*$. 
	\end{proof}

\section{Optimality}\label{optimal}

As we mentioned in the introduction, the class of elliptic operators we consider is optimal to guarantee the $A_\infty$ property. In this section we illustrate the optimality from two different points of view. See Proposition \ref{p:bruno} and Theorem \ref{thm:oscop}.

%\chema{Changed $B_Q$ to $RH_q$ as already appeared after Definition \ref{def:AinftyHMU} and write so that we can adapt it to elliptic measure} 
As mentioned right after Definition \ref{def:AinftyHMU}, one has that $\omega_L\in A_\infty(\sigma)$ if and only if 
$\omega_L\in RH_q(\sigma)$ for some $q>1$ in the following sense: $\omega_L \ll \sigma$ and the
Radon-Nikodym derivative $\textbf{k}_L:= d\omega_L/d\sigma$ satisfies the reverse H\"older estimate \eqref{eq1.wRH}. We can then define the $RH_q(\sigma)$-characteristic of $\omega_L$ as folows 
\begin{equation}\label{eq1.wRH-char}
[\omega_L]_{RH_q}:=\sup\left(\fint_{\Delta'} \big(\textbf{k}_L^{A(q,r)}\big)^q d\sigma \right)^{\frac1q} \left(\fint_{\Delta'} \textbf{k}_L^{A(q,r)} \,d\sigma\right)^{-1},
\end{equation}
where the sup runs over all $q\in\partial\Omega$, $0<r<\diam (\Omega)$, and all surface balls $\Delta'=B'\cap \pO$ centered at $\pO$ with $B'\subset B(q,r)$. 

The following example, based on the work in \cite{MM} and communicated to us by Bruno Guiseppe  Poggi Cevallos,  illustrates the relationship between the size of the constant in the DKP condition and the $RH_q(\sigma)$-characteristic of elliptic measure. 

%\chema{Changed to $\RR^n_+$ and removed $\Omega$}
\begin{prop}[\cite{MM, P}]\label{p:bruno}
There exist ${\mathcal A}$ and a sequence $\{{\mathcal A}_j\}_j$ of diagonal elliptic matrices with smooth, bounded, real coefficients in $\RR^{n}_+$, uniformly continuous on $\overline{\RR^{n}_+}$, such that ${\mathcal A}_j$ converges to ${\mathcal A}$ uniformly on $\overline{\RR^{n}_+}$ and the following hold: 
\begin{enumerate}[label=\textup{(\arabic*)}, itemsep=0.2cm]
\item 
$\displaystyle{
\sup_{\substack{q\in\RR^{n-1} \\ 0< r<\infty} } \frac{1}{r^{n-1}} \iint_{B(q,r) \cap\RR^{n}_+} 
\bigg(
\sup_{Y\in B(X,\frac{\delta(X)}{2})} |\nabla \mathcal{A}_j(Y)|^2 \delta(Y)\bigg)dX \gtrsim j.
}$

\item For each $q>1$, one has $\omega_j\in RH_q(\sigma)$ with $\displaystyle{\lim_{j\to\infty}[\omega_j]_{RH_q}=\infty}$, where $\omega_j$ denotes the elliptic measure associated with the operator $L_j=-\divg({\mathcal A}_j(\cdot)\nabla)$.

\item The elliptic measure associated with the operator $L=-\divg({\mathcal A}(\cdot)\nabla)$ is singular with respect to the Lebesgue measure on $\partial \RR^{n}_+=\RR^{n-1}$.
\end{enumerate}
\end{prop}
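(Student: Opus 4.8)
The plan is to extract the construction of Proposition~\ref{p:bruno} directly from the counterexamples of Modica–Mortola \cite{MM}, which produce a single elliptic matrix $\mathcal A$ on $\RR^n_+$ whose elliptic measure is singular, and then to realize $\mathcal A$ as a uniform limit of a sequence $\mathcal A_j$ of \emph{smooth} diagonal matrices obtained by a standard mollification/truncation procedure. First I would recall that the \cite{MM} example is of diagonal form $\mathcal A(x,t)=\mathrm{diag}(b(x,t),\dots,b(x,t),1)$ (or a similar block form) with $b$ real, bounded, uniformly continuous on $\overline{\RR^n_+}$, bounded away from $0$ and $\infty$, and chosen so that the harmonic measure of the associated operator $L=-\divg(\mathcal A\nabla)$ is mutually singular with respect to $\mathcal{H}^{n-1}$ on $\RR^{n-1}$; this gives conclusion~(3) immediately. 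The key point is that $b$ is \emph{not} of DKP type: its oscillation fails the Carleson condition. I would then set $\mathcal A_j$ to be a smoothing of $\mathcal A$ at scale depending on $j$ — e.g. convolve $b$ with a mollifier $\varphi_{1/j}$ and arrange the result to be smooth, bounded, uniformly elliptic with the same ellipticity constants up to a vanishing error, and uniformly continuous on $\overline{\RR^n_+}$, so that $\mathcal A_j\to\mathcal A$ uniformly on $\overline{\RR^n_+}$.

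The second step is conclusion~(1). Since each $\mathcal A_j$ is smooth, its gradient is controlled by the modulus of continuity of $b$ at scale $1/j$: roughly $|\nabla\mathcal A_j|\lesssim j\,\omega_b(1/j)$ pointwise, but the Carleson \emph{norm} of $|\nabla\mathcal A_j|^2\delta$ (or the sup-enlarged version in the statement) must blow up because in the limit the Carleson measure associated with the oscillation of $b$ is infinite. Concretely, if the oscillation Carleson functional of $b$ diverges — which is exactly what prevents $\mathcal A$ itself from satisfying any DKP-type bound and is forced by the singularity in view of the positive direction of the theorem in \cite{KP}/\cite{HMT1} — then a Fatou/lower-semicontinuity argument for the mollified coefficients shows $\|\mathcal A_j\|_{\mathrm{Car}}\to\infty$, and one can extract a subsequence (relabeled $j$) along which the left-hand side of~(1) is $\gtrsim j$. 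The cleanest way to organize this is: suppose for contradiction that along some subsequence the Carleson quantities in~(1) stay bounded by a fixed $C$; then by passing to the limit (uniform convergence of $\mathcal A_j$ plus a weak-$*$ compactness argument for the Carleson measures $|\nabla\mathcal A_j|^2\delta\,dX$, together with the oscillation estimate $\mathrm{osc}_{B(X,\delta(X)/2)}\mathcal A \lesssim \int_{B}|\nabla\mathcal A_j|\,\delta^{1-n}$) one would deduce an oscillation-type Carleson bound on $\mathcal A$ of the kind appearing in Corollary~\ref{cOsc}/\cite{HMT1}, hence $\omega_L\in A_\infty(\sigma)$, contradicting~(3). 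After a relabeling this yields~(1) in the stated form with the lower bound growing linearly in $j$.

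The third step is conclusion~(2). Each $\mathcal A_j$ is smooth with $|\nabla\mathcal A_j|\delta(\cdot)\in L^\infty$ and finite (though large) DKP-Carleson norm, so by \cite{KP} (in the $\RR^n_+$ setting, where it is classical) the elliptic measure $\omega_j$ lies in $A_\infty(\sigma)$, hence in $RH_q(\sigma)$ for every $q>1$; this gives the qualitative part. For the divergence $[\omega_j]_{RH_q}\to\infty$ I would argue by contradiction: if along a subsequence $[\omega_j]_{RH_q}\le C$ for some fixed $q>1$, then the densities $\mathbf{k}_j=d\omega_j/d\sigma$ satisfy a uniform reverse Hölder inequality; combined with the uniform ellipticity and the fact that $\mathcal A_j\to\mathcal A$ uniformly, one passes to the limit (using stability of elliptic measure under uniform convergence of coefficients on $\RR^n_+$ — $\omega_j\rightharpoonup\omega_L$ in the appropriate sense, via convergence of Green functions / solutions of the Dirichlet problem on bounded subdomains) to conclude that $\omega_L$ itself satisfies a reverse Hölder inequality with respect to $\sigma$, in particular $\omega_L\ll\sigma$, contradicting~(3). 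Hence no such uniform bound can hold and $[\omega_j]_{RH_q}\to\infty$ for each $q>1$.

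The main obstacle I anticipate is the limiting/stability argument: one must make rigorous that (a) the Carleson measures $|\nabla\mathcal A_j|^2\delta\,dX$ cannot stay uniformly bounded without forcing a DKP-type (oscillation) bound on the limit $\mathcal A$, and (b) a uniform $RH_q$ bound on $\omega_j$ is inherited by $\omega_L$ in the limit. Point (b) requires care because $\RR^n_+$ is unbounded and elliptic measure is not a priori continuous under uniform coefficient perturbations without some compactness; the standard fix is to work on truncated domains (large balls or boxes intersected with $\RR^n_+$), use interior and boundary Hölder estimates for solutions of $L_ju=0$ with uniform constants, extract locally uniformly convergent subsequences of solutions, identify the limit as a solution of $Lu=0$ with the same boundary data, and thereby identify $\lim_j\omega_j^X=\omega_L^X$ on surface cubes; then a uniform $RH_q$ estimate passes to the limit by Fatou. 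All of the inputs — \cite{MM} for the singular example, \cite{KP}/\cite{HMT1} for $\omega_j\in A_\infty$, and the oscillation-version statement referenced in Corollary~\ref{cOsc} — are available, so the argument is essentially a soft compactness/contradiction packaging of known facts rather than new hard analysis.
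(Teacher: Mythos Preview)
The paper does not actually prove Proposition~\ref{p:bruno}; it is stated with attribution to \cite{MM} and \cite{P} and no argument is given. So there is no ``paper's own proof'' to compare against. That said, your proposal deviates from what those references do, and it contains a genuine gap.

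The gap is in your step for conclusion~(2). You write that since each $\mathcal{A}_j$ has finite DKP Carleson norm, \cite{KP} gives $\omega_j\in A_\infty(\sigma)$, ``hence in $RH_q(\sigma)$ for every $q>1$.'' This implication is false: membership in $A_\infty(\sigma)$ is equivalent to $\omega_j\in RH_q(\sigma)$ for \emph{some} $q>1$, not for every $q>1$. The Kenig--Pipher theorem, applied to your mollified matrices, yields only an $RH_{q_j}$ exponent depending on the (blowing-up) Carleson norm of $\mathcal{A}_j$, and there is no reason $q_j$ should stay bounded away from $1$, let alone be arbitrary. So your mollification scheme does not produce the qualitative part of~(2) as stated.

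This is why the construction in \cite{MM} (and its elaboration in \cite{P}) does not proceed by mollifying the singular limit. Instead, the coefficient $b$ is built as an explicit infinite product (or lacunary series), and the $\mathcal{A}_j$ are the finite truncations of that product. These truncations are smooth and, crucially, have enough explicit structure that the associated elliptic measures $\omega_j$ can be shown directly to have densities in every $RH_q$ (indeed, bounded densities), not merely via the soft $A_\infty$ conclusion of \cite{KP}. The growth $\gtrsim j$ in~(1) and the blow-up in~(2) then come from explicit computation on the truncations rather than from a compactness/contradiction argument. Your soft limiting arguments for~(1) and the divergence part of~(2) are plausible in spirit, but they rest on the qualitative $RH_q$-for-all-$q$ claim that your construction does not deliver; to fix this you would need to replace mollification by the explicit truncation scheme of \cite{MM}.
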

%{\Bl ZZ: In fact, I want something more quantitative for the second item above, involving $j$. Depends on what Bruno gives us.}

On the other hand, we can immediately extend Theorem \ref{thm:main} to a larger and optimal class of elliptic operators, pertaining to the condition on the oscillation of the coefficient matrix:

\begin{corollary}\label{cOsc}
Let $\Omega\subset \RR^n$, $n\ge 3$, be a uniform domain with Ahlfors regular boundary. Let $\wcalA$ be a (not necessarily symmetric) uniformly elliptic matrix on $\Omega$  such that 
	\begin{equation}\label{eq:oscCM}
		\sup_{\substack{q\in\pO\\0< r<\diam(\Omega)} } \frac{1}{r^{n-1}} \iint_{B(q,r) \cap\Omega} \frac{\osc(\wcalA, X)^2}{\delta(X)} \,dX<\infty.
	\end{equation}  where
	$\osc(\wcalA, X) := \sup_{Y,Z\in B(X, \delta(X)/2)}   |\wcalA(Y) -\wcalA(Z)|$. Then the following are equivalent:
\begin{enumerate}[label=\textup{(\alph*)}, itemsep=0.2cm] 
%[label=\textup{(\arabic*)}, itemsep=0.2cm] 	
		\item\label{1-corol-osc} The elliptic measure $\omega_L$ associated with the operator $L=-\divg(\wcalA(\cdot)\nabla)$ is of class $A_\infty$ with respect to the surface measure. 
		\item\label{3-corol-osc} $\pO$ is uniformly rectifiable.
		\item\label{2-corol-osc} $\Omega$ is a chord-arc domain.
		\end{enumerate} 
\end{corollary}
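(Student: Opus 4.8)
The plan is to deduce Corollary \ref{cOsc} from Theorem \ref{thm:main} by a mollification argument: the condition \eqref{eq:oscCM} on the oscillation of $\wcalA$ allows us to replace $\wcalA$ by a smoothed matrix $\widetilde{\wcalA}$ that satisfies the genuine Dahlberg--Kenig--Pipher hypotheses \ref{H1} and \ref{H2}, while keeping the elliptic measures mutually $A_\infty$-equivalent. First I would define, for each $X\in\Omega$, a regularization $\widetilde{\wcalA}(X) := \aver{B(X,\delta(X)/4)} \wcalA(Y)\,dY$ (or a smooth analogue obtained by convolving against a bump function supported in $B(X,\delta(X)/4)$ with the obvious $X$-dependent scaling). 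Since averaging preserves the ellipticity bounds \eqref{def:UE}, $\widetilde{\wcalA}$ is uniformly elliptic with the same constants. One checks by a standard computation that $\widetilde{\wcalA}\in \Lip_{\rm loc}(\Omega)$ with
\[
|\nabla\widetilde{\wcalA}(X)| \lesssim \frac{\osc(\wcalA,X)}{\delta(X)},
\]
because differentiating the average produces a factor $\delta(X)^{-1}$ times a difference of values of $\wcalA$ over a ball of radius comparable to $\delta(X)$; this gives \ref{H1} (since $\osc(\wcalA,\cdot)$ is bounded by $2\Lambda$) and, upon squaring and multiplying by $\delta(X)$, exactly converts \eqref{eq:oscCM} into the Carleson condition \ref{H2}:
\[
|\nabla\widetilde{\wcalA}(X)|^2\,\delta(X) \lesssim \frac{\osc(\wcalA,X)^2}{\delta(X)}.
\]
Thus Theorem \ref{thm:main} applies to $\widetilde L := -\divg(\widetilde{\wcalA}\nabla)$, yielding the equivalence of $\omega_{\widetilde L}\in A_\infty(\sigma)$, uniform rectifiability of $\pO$, and $\Omega$ being chord-arc.

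Next I would transfer the $A_\infty$ statement between $L$ and $\widetilde L$. The key point is that $|\wcalA(X)-\widetilde{\wcalA}(X)| \le \osc(\wcalA,X)$ pointwise, so the disagreement $\mathcal{E}:=\wcalA-\widetilde{\wcalA}$ satisfies
\[
\sup_{\substack{q\in\pO\\0<r<\diam(\Omega)}} \frac{1}{r^{n-1}} \iint_{B(q,r)\cap\Omega} \frac{|\mathcal{E}(X)|^2}{\delta(X)}\,dX <\infty,
\]
i.e. $|\mathcal{E}(X)|^2/\delta(X)$ is a Carleson measure. This is precisely the kind of perturbation under which $A_\infty$ of elliptic measure is preserved in both directions: one invokes the perturbation theory for elliptic measure (of Fefferman--Kenig--Pipher type, in the form available on uniform domains with Ahlfors regular boundary --- see e.g. \cite{CHMT} and the references therein, or \cite{MPT}-type results) to conclude $\omega_L\in A_\infty(\sigma)\iff \omega_{\widetilde L}\in A_\infty(\sigma)$. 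Combining this equivalence with the conclusion of Theorem \ref{thm:main} applied to $\widetilde L$ gives at once the chain \ref{1-corol-osc} $\iff$ \ref{3-corol-osc} $\iff$ \ref{2-corol-osc} for $L$. (As in the proof of Theorem \ref{thm:main}, one may also first reduce to the symmetric case via \cite[Theorem 1.6]{CHMT}, since symmetrizing commutes with averaging and does not affect \eqref{eq:oscCM}.)

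The main obstacle I anticipate is making the Carleson-perturbation step rigorous in exactly the generality needed here, since the cleanest perturbation theorems in the literature are often stated under the small-Carleson-constant hypothesis, for the Dirichlet problem rather than $A_\infty$, or on Lipschitz/chord-arc rather than general uniform domains. One must cite (or reprove) a large-constant Carleson perturbation result for the $A_\infty$ property that is valid on uniform domains with Ahlfors regular boundary; fortunately, such results are now available and, moreover, the framework of the present paper --- the extrapolation machinery and the transference to sawtooth domains developed in Sections \ref{S:proof-by-extrapolation}--\ref{sect:trsf} --- itself provides exactly the tools to establish it if an off-the-shelf citation is not clean enough. A secondary, purely technical, point is verifying the Lipschitz bound on $\widetilde{\wcalA}$ and the pointwise comparison with the oscillation carefully when $\delta(X)$ varies; this is routine but must be done with the $X$-dependent mollification scale handled correctly near $\pO$.
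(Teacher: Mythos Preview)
Your proposal is correct and follows essentially the same route as the paper: mollify $\wcalA$ at scale $\delta(X)/4$ to obtain $\widetilde{\wcalA}$ satisfying \ref{H1}--\ref{H2}, apply Theorem \ref{thm:main} to $\widetilde{L}$, and then use the Carleson perturbation theorem \cite[Theorem 1.3]{CHMT} (valid on uniform domains with Ahlfors regular boundary, in the large-constant regime, and for non-symmetric operators) to transfer $A_\infty$ between $\omega_L$ and $\omega_{\widetilde L}$. Your anticipated obstacle is therefore a non-issue---the needed large-constant perturbation result is available off the shelf in \cite{CHMT}---and the parenthetical reduction to the symmetric case is unnecessary here since Theorem \ref{thm:main} already covers non-symmetric matrices.
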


\begin{proof}
Let $\varphi$ be a non-negative radial, smooth bump function supported in the unit ball, such that $\int_{\RR^n} \varphi \,dX = 1$. We define for $X\in\Omega$ and $t\in (0, \delta(X))$
	\[ P_t \, \wcalA(X) := \varphi_t * \wcalA(X) = \iint_{\RR^n} \frac{1}{t^n} \,\varphi\left( \frac{X-Y}{t} \right) \wcalA(Y) \,dY,  \]
	and write for $X\in\Omega$
	\begin{equation}\label{eq:oscdcp}
		\wcalA(X) = P_{\frac{\delta(X)}{4}} \, \wcalA(X) + \left( \Id - P_{\frac{\delta(X)}{4}} \right) \wcalA(X) =: \widetilde{\wcalA}(X) + \left( \wcalA(X) - \widetilde{\wcalA}(X) \right).
	\end{equation} 
It is easy to see that $\widetilde{\wcalA}$ is uniformly elliptic with the same constants as $\wcalA$ and also that for every $X\in\Omega$
	\begin{equation}\label{eq:osc}
		|\nabla \widetilde{\wcalA}(X)| \leq C\, \frac{\osc(\wcalA, X)}{\delta(X)} \qquad\mbox{and}\qquad \sup_{Y\in B(X, \delta(X)/4)} |\wcalA(Y) - \widetilde{\wcalA}(Y)| \leq \osc(\wcalA, X).
	\end{equation}
	Note that under assumption \eqref{eq:oscCM}, the second estimate in \eqref{eq:osc} allows us to invoke \cite[Theorem 1.3]{CHMT} to obtain that $\omega_{L_\wcalA} \in A_\infty(\sigma)$  if and only if $\omega_{L_{\widetilde{\wcalA}}} \in A_\infty(\sigma)$. On the other hand, the first estimate in \eqref{eq:osc} readily implies that $\widetilde{\wcalA}$ satisfies  \ref{H1} and \ref{H2}. Hence,  Theorem \ref{thm:main} applied to $\widetilde{\wcalA}$ gives at once the desired equivalences. 
	We remark that the direction $(c)\implies (a)$ was also proved earlier in \cite[Theorem 2.4]{Rios}.
\end{proof}

Corollary \ref{cOsc} is sharp in terms of the class of operators satisfying \eqref{eq:oscCM}. We recall the following examples that illustrate this fact.

%Recall that elliptic matrices satisfying \eqref{eq:oscCM} is sharp for the $A_\infty$ property, in the sense of the following theorem. Therefore our result is optimal in terms of elliptic operators.
\begin{theorem}{\cite[Theorem 4.11]{FKP}}\label{thm:oscop}
	Suppose $\alpha$ is a non-negative function defined on $\RR^2_+$ satisfying the doubling condition: $\alpha(X) \leq C\alpha(X_0)$ for any $X_0\in \RR^2_+$ and $X\in B(X_0, \delta(X_0)/2)$. Assume that $\alpha^2(x,t) dx \, dt/t$ is not a Carleson measure in the unit square. Then there exists an elliptic operator $L= - \divg(\mathcal{A}(\cdot)\nabla)$ on $\RR^2_+$, such that
	\begin{enumerate}[label=\textup{(\arabic*)}, itemsep=0.2cm]
		\item For any interval $I \subset \RR$ and $T(I)=I\times [0,\ell(I)]$,
		\begin{equation}\label{eq:oscbd}
			\frac{1}{|I|} \iint_{T(I)} \frac{\osc^2(\mathcal{A}(x,t))}{t} dx \, dt \leq C\left[ \frac{1}{|I|} \iint_{T(2I)} \frac{\alpha^2(x,t)}{t} dx \, dt + 1 \right]; 
		\end{equation} 
		\item The elliptic measure $\omega_{L}$ is not of class $A_\infty(dx)$ on the unit interval $[0,1]$.
	\end{enumerate}
\end{theorem}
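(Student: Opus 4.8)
The plan is to build $\wcalA$ by patching together single‑scale ``shear'' perturbations of the Laplacian over a Whitney decomposition of $\RR^2_+$, using $\alpha$ as a budget for the size of each perturbation, and then to detect the failure of $A_\infty$ along the sequence of intervals that witness the non‑Carleson hypothesis. \textbf{Step 1 (discretization).} Fix a dyadic Whitney decomposition $\{W_Q\}$ of $\RR^2_+$, $W_Q\approx Q\times[\ell(Q),2\ell(Q)]$, indexed by dyadic intervals $Q\subset\RR$, and set $\alpha_Q:=\sup_{2W_Q}\alpha$. The doubling hypothesis on $\alpha$ gives $\alpha\approx\alpha_Q$ on $W_Q$, so for every interval $I$ one has $\frac1{|I|}\iint_{T(I)}\frac{\alpha^2}{t}\,dx\,dt\approx\frac1{|I|}\sum_{Q\subset I}\alpha_Q^2|Q|$; hence the assumption that $\alpha^2\,dx\,dt/t$ is not Carleson in the unit square is equivalent to the existence of intervals $I_j\subset[0,1]$ with $m_j:=\frac1{|I_j|}\sum_{Q\subset I_j}\alpha_Q^2|Q|\to\infty$. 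After truncating $\alpha$ at a small dimensional constant $\epsilon_0$ (which only changes the left side of \eqref{eq:oscbd} by an amount absorbed into the additive ``$1$''), we may assume $\alpha_Q\le\epsilon_0$ for all $Q$.

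\textbf{Step 2 (building block and assembly).} For a dyadic interval $Q$ and $\beta\in(0,\epsilon_0]$ one constructs an explicit smooth symmetric matrix $\wcalA_Q^\beta$, equal to the identity outside $2W_Q$, with $\osc(\wcalA_Q^\beta,X)\lesssim\beta$ near $W_Q$ and $|\nabla\wcalA_Q^\beta|\lesssim\beta/\ell(Q)$, coming from a change of variables $(x,t)\mapsto(x-b(x,t)\,t,t)$ with $b$ interpolating smoothly between $0$ and $\beta$ on $W_Q$: by the first‑order behaviour of harmonic measure for a slightly tilted half‑plane, inserting this block multiplies the renormalized Poisson kernel of the ambient operator, restricted to $Q$, by a factor in $[1+c_0\beta,\,1+2c_0\beta]$ on one half of $Q$ and in $[1-2c_0\beta,\,1-c_0\beta]$ on the other, for a fixed $c_0>0$, and it costs Carleson mass $\iint\frac{\osc^2}{t}\lesssim\beta^2|Q|$. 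Now attach an independent sign $\varepsilon_Q\in\{\pm1\}$ to each $W_Q$ and let $\wcalA$ be obtained by composing the blocks $\wcalA_Q^{\alpha_Q}$ over all dyadic $Q$, the sign $\varepsilon_Q$ selecting which half of $Q$ is amplified. Because the blocks at a fixed scale have disjoint supports and the scales are lacunary, $\osc(\wcalA,X)\lesssim\alpha_Q$ on $W_Q$; summing over $Q\subset I$ and invoking Step 1 yields \eqref{eq:oscbd}.

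\textbf{Step 3 (failure of $A_\infty$).} Let $\mathbf k$ be the density of $\omega_L$ on $[0,1]$ with pole at a fixed interior corkscrew point and fix a witnessing interval $I_j$. Linearizing the composition of the blocks in the small parameters $\alpha_Q\le\epsilon_0$ (a quantitative application of Dahlberg--Jerison--Kenig type perturbation theory: stability of the Poisson kernel under small $L^\infty$/Carleson perturbations), one gets, up to multiplicative errors tending to $1$,
\[
\log\mathbf k(x)\ \approx\ \log\mathbf k(x_{I_j})\ +\ \sum_{\substack{Q\subset I_j\\ x\in Q}} c_0\,\varepsilon_Q\,\alpha_Q\,\operatorname{sgn}(x-x_Q),\qquad x\in I_j,
\]
so that the dyadic martingale of $\log\mathbf k$ on $I_j$ has uniformly bounded increments of size $\approx c_0\alpha_Q$. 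Taking the signs $\varepsilon_Q$ at random and independent, the expected value of the square function of this martingale on $I_j$ is $\approx c_0^2\,\frac1{|I_j|}\sum_{Q\subset I_j}\alpha_Q^2|Q|=c_0^2 m_j$, hence for some sign choice $\|\log\mathbf k\|_{\mathrm{BMO}(I_j)}^2\gtrsim m_j$ (square function versus BMO, using the boundedness of the increments). But if $\omega_L$ belonged to $A_\infty(dx)$ on $[0,1]$ it would satisfy a reverse H\"older inequality $RH_q$ with $q>1$ uniformly over subintervals, which via John--Nirenberg forces $\sup_j\|\log\mathbf k\|_{\mathrm{BMO}(I_j)}<\infty$, contradicting $m_j\to\infty$. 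Therefore $\omega_L\notin A_\infty(dx)$ on $[0,1]$.

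\textbf{Main obstacle.} The explicit building block of Step 2 and the verification of \eqref{eq:oscbd} in Step 3 are essentially bookkeeping; the delicate part is Step 3's claim that the contributions of the many small single‑scale perturbations to $\log\mathbf k$ genuinely add up with errors controlled by the $\alpha$‑Carleson mass, so that the square function of $\log\mathbf k$ on $I_j$ is comparable to $m_j$. This demands a careful quantitative perturbation estimate for the elliptic measure (rather than just qualitative stability) together with a Chang--Wilson--Wolff style comparison of the square function and the BMO norm of $\log\mathbf k$; it is here that the smallness $\alpha_Q\le\epsilon_0$ is used in an essential way to keep the linearization errors summable.
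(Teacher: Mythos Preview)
The paper does not prove this theorem; it is quoted verbatim from \cite[Theorem 4.11]{FKP}, and the only indication of method given here is the remark immediately following the statement: the examples in \cite{FKP} are built via quasi-conformal maps on $\RR^2_+$.  Concretely, one constructs a quasi-conformal self-map $\Phi$ of $\RR^2_+$ whose Beltrami coefficient $\mu$ satisfies $|\mu|\lesssim\alpha$, and then takes $L$ to be the push-forward of $-\Delta$ under $\Phi$.  The elliptic measure of $L$ is then \emph{exactly} the push-forward of Lebesgue measure by the boundary homeomorphism $\Phi|_{\RR}$, so the failure of $A_\infty$ reduces to showing that this boundary map fails to be an $A_\infty$ weight --- a purely real-variable problem about quasi-symmetric maps, with no elliptic perturbation theory required.

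Your approach is genuinely different: you try to assemble $\wcalA$ from independent single-scale shear blocks and to read off the Poisson kernel perturbatively.  Steps~1 and~2 are fine, and the oscillation estimate \eqref{eq:oscbd} does follow essentially as you indicate.  The serious gap is Step~3.  The ``linearization'' formula you write for $\log\mathbf{k}$ is not a consequence of any standard perturbation result: theorems of Dahlberg--Jerison--Kenig or Fefferman--Kenig--Pipher type assert that a \emph{small-Carleson} perturbation preserves the $A_\infty$ class, but they do not deliver a first-order expansion of $\log\mathbf{k}$ in the perturbation parameters with errors controlled term-by-term.  Worse, on the witnessing intervals $I_j$ the total Carleson norm of your composed perturbation is $\approx m_j\to\infty$, so you are squarely outside the small-constant regime where any such perturbation theory applies; the very hypothesis you need (non-Carleson) is exactly what prevents the linearization from being justified.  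In addition, you need a \emph{single} sign assignment $\{\varepsilon_Q\}$ that defeats $A_\infty$ for \emph{all} $j$ simultaneously; you hint at randomization, but as written you only obtain a sign choice for each fixed $j$.

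This is precisely why the quasi-conformal route is taken in \cite{FKP}: it converts the heuristic ``each scale contributes an independent multiplicative factor to the kernel'' into an exact identity $d\omega_L = (\Phi|_{\RR})_{\#}\,dx$, so that no perturbative control of $\log\mathbf{k}$ is needed at all.  If you want to salvage your strategy, you would need either an exact model (e.g., a Beurling--Ahlfors type construction producing the boundary map explicitly) or a much sharper iterative perturbation lemma than currently exists in the literature.
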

\begin{remarks}
	The examples above are constructed using quasi-conformal maps in $\RR^2$. In \cite{FKP} the authors show the estimate \eqref{eq:oscbd} holds when $\osc (\mathcal{A,}(x,t))$ is replaced by the oscillation of elliptic matrix $\mathcal{A}(X)$ minus the identity matrix, i.e., $a(X): = \sup_{Y\in B(X, \delta(X)/2 )} |\mathcal{A}(Y) - \Id|$. It is easy to see that for those examples \eqref{eq:oscbd} follows. As in \cite[Theorem 3]{CFK}, one can extend the 2 dimensional examples to $\RR^n_+$ with $n\geq 3$ by using the Laplacian operator in the remaining tangential directions.

%	. (See also previous constructions of singular elliptic measures in \cite{CFK}.) The examples in \cite{FKP} in fact satisfy the estimate \eqref{eq:oscbd} for the oscillation of elliptic matrix $A(X)$ from the identity matrix, i.e. $a(X): = \sup_{Y\in B(X, \delta(X)/2 )} |A(Y) - \Id|$; but \eqref{eq:oscbd} for $\osc(A, (x,t))$ follows easily. Moreover, similar to \cite[Theorem 3]{CFK}, one can easily extend the above examples to $\RR^n_+$ with $n\geq 3$ by using the Laplacian operator in the remaining tangential directions.
\end{remarks}

\end{document}